\renewcommand{\phi}{\varphi}
\newcommand{\R}{{\mathbb{R}}}
\newcommand{\Z}{{\mathbb{Z}}}
\newtheorem{Theorem}{Theorem}[section]
\newtheorem{Lemma}[Theorem]{Lemma}
\newtheorem{Corollary}[Theorem]{Corollary}
\newtheorem{Proposition}[Theorem]{Proposition}
\newtheorem{Definition}[Theorem]{Definition}
\newtheorem{Remark}[Theorem]{Remark}
\def\Z{{\mathbb Z}}
\def\R{{\mathbb R}}
\def \0{\lambda_{0}}
\begin{document}

\title[The Euler problem of two fixed centers]{Dynamical convexity of the Euler problem of two fixed centers}

\author{Seongchan Kim}
 \address{Universit\"at Augsburg, Universit\"atsstrasse 14, D-86159 Augsburg, Germany}
 \email {seongchan.kim@math.uni-augsburg.de}



\begin{abstract}
We give  thorough analysis for the rotation functions of the critical orbits from which one can understand bifurcations of periodic orbits. Moreover, we give explicit formulas of the Conley-Zehnder indices of the interior and exterior collision orbits and show that the universal cover of the regularized energy hypersurface of the Euler problem  is dynamically convex for energies below the critical Jacobi energy.
\end{abstract}

\maketitle
\tableofcontents

\section{Introduction}

The Euler problem of two fixed centers describes the motion of a massless body  under the influence of two fixed massive bodies according to Newton's law of gravitation. The two primaries will be referred to as the Earth and the Moon and  the massless body as  the satellite.  The problem  was first introduced by Euler in 1760 and he considered this problem as a starting point to study the restricted three body problem \cite{Euler1, Euler2}. Indeed, it can be obtained from the  planar  circular restricted three body problem by switching off the rotating term.  In 1902  Charlier  classified all orbits of this problem \cite[chapter 3]{Charlier}. He divided orbits into several classes according to the values of integrals.  Pauli \cite{Pauli}   studied this problem to investigate the hydrogen molecular ion ${H_2^+}$. In such a situation, the two primaries are regarded as two protons and the massless body as an electron.

The describing Hamiltonian   is given by 
\[
H : \big( \R^2 \setminus \left \{ \text{E}, \text{M} \right \} \big) \times \R^2 \rightarrow \R ,\;\;\;
(q,p)  \mapsto \frac{1}{2}|p|^2 - \frac{1-\mu}{ |q-\text{E}|} - \frac{\mu}{|q-\text{M}|} ,
\]
where $\mu \in (0,1)$ is the mass ratio of the two primaries,  $\text{E}=(-1/2,0)$ and $\text{M}=(1/2,0)$, i.e.,  the Earth is located at $(-1/2,0)$ and the Moon at $(1/2,0)$.  Notice that for a negative energy, the motion of the satellite is bounded. Without loss of generality we  assume that $ \mu \in (0, 1/2] $, i.e.,  the Earth is stronger.  The Hamiltonian has a unique critical point $L=(l,0,0,0)$, where $ l =  \frac{ 1-2\sqrt{ \mu(1-\mu)}}{2 (1-2\mu)}$  for  $\mu \neq 1/2 $ and $l= 0 $ for $ \mu = 1/2 $. Note that the projection of the critical point on the configuration space  lies on the line segment joining the Earth and the Moon.  The energy value ${ c_J :=H(L)= -1 -2 \sqrt{ \mu(1-\mu)} }$ is referred to as  the critical Jacobi energy. There are another two distinguished energy levels that we denote by $ c_e $ and $ c_h$ at which the Liouville foliation changes, see Remark \ref{energyfola}.

For an energy $c$, the Hill's region is defined by
\begin{equation*}
\mathcal{K}_c := \pi ( H^{-1}(c)) \subset \R^2 \setminus \left \{ \text{E, M} \right\},
\end{equation*}
where $\pi : \big( \R^2 \setminus \left \{ \text{E, M} \right \} \big) \times \R^2 \rightarrow \R^2 \setminus \left \{ \text{E, M} \right \} $ is the projection along $\R^2$. For  $c < c_J$, the  region ${\mathcal{K}_c}$ consists of two bounded connected components: one is a neighborhood of  the Earth and the other is a neighborhood  of  the Moon. We denote these components by ${ \mathcal{K}_c^E}$ and ${ \mathcal{K}_c^M}$, respectively. For ${ c_J < c < 0 }$, these two components become connected. Notice that  there is  no unbounded component for a negative energy, which is not the case in the rotating Kepler problem and the restricted three body problem.  For $c>0$, the Hill's region is the plane with the two punctures:  $ \R^2 \setminus \left \{ \text{E, M} \right \} $. In what follows, we  consider  only negative energies.

An interesting feature of the Euler  problem is the fact that the system is completely intergrable,  which was first discovered by Euler. More precisely, there exists a smooth function $G$, which is called the first integral of the system,  other than the Hamiltonian $H$ such that $dH$ and $dG$ are linearly independent almost everywhere and they are in involution: $\left \{ H, G\right\} =0$.  There are  distinguished periodic orbits: the interior collision orbit, the exterior collision orbit, the double-collision orbit, the hyperbolic orbit and the elliptic orbit, see Figure \ref{motions}. They are critical orbits, more precisely the derivatives of the Hamiltonian and the first integral are linearly dependent along these orbits and hence the corresponding leaves of the Liouville foliation are singular.

Since the Earth and the Moon are fixed, one can consider them as the foci of a set of ellipses and hyperbolas. Thus, one can introduce the elliptic coordinates  $( \xi, \eta)$. Introducing the elliptic coordinates, the system becomes separable and one can compute the periods ${\tau_{\xi}}$ and $\tau_{\eta}$ of $\xi$- and $\eta$-oscillations of each orbit. By the rotation number of an orbit, we mean the ratio $R=\tau_{\eta}/\tau_{\xi}$, which  depends only on the value $(G , H)=(g, c)$.

\begin{Proposition} \label{prop1} We denote by $R_{\text{int}}$,  $R_{\text{ext}} ^{\text{E}}$, $R_{\text{ext}}^{\text{M}}$, $R_{\text{dou}}$, $R_{\text{hyp}}$ and $R_{\text{ell}}$ the rotation functions of the interior collision orbit, the exterior collision orbits in the Earth and the Moon components, the double-collision orbit, the hyperbolic orbit and the elliptic orbit. Then  the following hold
\begin{eqnarray*}
&(\text{a})& \text{$R_{\text{int}}$ strictly increases from $1$ to $\infty$ as an energy increases from $-\infty$ to $c_J$.}  \\
&(\text{b})& \text{$R_{\text{ext}} ^{\text{E}}$  strictly increases from $1$ as an energy increases from $-\infty$ to $0$. }\\
&&\text{In particular, $\lim_{c \rightarrow 0} R_{\text{ext}} ^{\text{E}} = \infty$ for the symmetric case $\mu =1/2$.} \\
&(\text{c})&  \text{$R_{\text{ext}}^{\text{M}}$  strictly increases from $1$ to $\infty$ as an energy increases from $-\infty$ to $c_h$.}\\
&&\text{For $c \in ( c_h,0)$,  we have $R_{\text{ext}}^{\text{M}} = \infty$. }\\
&(\text{d})&  \text{$R_{\text{dou}}$  strictly decreases from $\infty$ to $0$ as an energy increases from $c_J$ to $c_e$.}\\
&&\text{For $ c \in (c_e, 0)$, we have $R_{\text{dou}} = 0$.} \\
&(\text{e})& \text{$R_{\text{hyp}}=\infty$. }\\
&(\text{f})&  \text{$R_{\text{ell}}$ strictly increases from $0$ to $1$ as an energy increases from $c_e$ to $0$}.
\end{eqnarray*}
\end{Proposition}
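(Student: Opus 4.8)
The plan is to use the separability in the elliptic coordinates $(\xi,\eta)$ to reduce every rotation function to a ratio of one-dimensional period integrals, and then to analyze those integrals orbit by orbit. Substituting the elliptic coordinates into $H=c$ and multiplying by the conformal factor $\sinh^2\xi+\sin^2\eta=(\cosh\xi-\cos\eta)(\cosh\xi+\cos\eta)$, the relation separates as
\[
p_\xi^2=\tfrac12 F_\xi(\cosh\xi),\quad F_\xi(w)=cw^2+2w-g,\qquad
p_\eta^2=\tfrac12 F_\eta(\cos\eta),\quad F_\eta(z)=-cz^2-2(1-2\mu)z+g,
\]
where $g$ is the (suitably normalized) value of the first integral $G$. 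The Sundman-type reparametrization $ds=4\,dt/(\sinh^2\xi+\sin^2\eta)$ turns the flow into the two uncoupled one-degree-of-freedom systems $\xi'=p_\xi$, $\eta'=p_\eta$ (primes denoting $d/ds$); since a time change does not alter the image of an orbit, $R=\tau_\eta/\tau_\xi$ equals the ratio of the $s$-periods, so it suffices to study $\tau_\xi=\oint d\xi/|p_\xi|$ and $\tau_\eta=\oint d\eta/|p_\eta|$.

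Next I would read off, for each critical orbit, the relation $g=g(c)$ and the type of the two motions, using that a critical leaf freezes one coordinate at a critical point of $\cosh\xi$ or $\cos\eta$ while the other librates or rotates. Thus the interior- and double-collision orbits have $\xi\equiv0$, $g=c+2$, with the $\eta$-motion librating from the collision $\eta=0$ to a turning point in (a) and rotating through both collisions $\eta\in\{0,\pi\}$ in (d); the exterior-collision orbits freeze $\eta\equiv0$ ($g=c+2-4\mu$, case (c)) or $\eta\equiv\pi$ ($g=c-2+4\mu$, case (b)) while $\xi$ librates from the collision $\xi=0$ to a turning point; the elliptic orbit freezes $\xi$ at the interior double root $\cosh\xi=-1/c$ of $F_\xi$ and the hyperbolic orbit freezes $\eta$ at the interior double root $\cos\eta=(1-2\mu)/(-c)$ of $F_\eta$, the free coordinate rotating. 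A frozen coordinate contributes the elementary period $2\pi/\omega$ of its transverse harmonic oscillator: at the boundary equilibria $\xi=0$, $\eta\in\{0,\pi\}$ one gets $\omega^2=\tfrac14|F'|$, whereas at an interior double root $\omega^2$ is governed by $F''$, giving (since $c<0$) a stable minimum for the $\xi$-equilibrium (elliptic orbit, finite $\tau_\xi$) and an unstable maximum for the $\eta$-equilibrium (hyperbolic orbit, $\tau_\eta=\infty$, hence $R_{\mathrm{hyp}}\equiv\infty$). The free coordinate instead contributes an elliptic integral: with $w=\cosh\xi$ one has $\tau_\xi=\sqrt2\oint dw/\sqrt{(w^2-1)F_\xi(w)}$, and symmetrically $\tau_\eta=\sqrt2\oint dz/\sqrt{(1-z^2)F_\eta(z)}$.

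The boundary values then follow from asymptotics. As $c\to-\infty$ the surviving motion collapses to a small near-radial Kepler orbit about a single primary, whose regularized radial period and transverse period coincide, giving $R\to1$ for (a), (b), (c); obtaining exactly $1$ requires treating the collision as a half-turn of the regularizing variable so that no spurious factor of $2$ appears. The value $R=\infty$ arises in two ways: a libration or rotation whose turning point approaches an interior double root of $F_\eta$ acquires a logarithmically divergent $\tau_\eta$, which gives $R_{\mathrm{int}},R_{\mathrm{dou}}\to\infty$ as that turning point reaches the saddle value at $c=c_J$ (one checks $(1-2\mu)/(1+2\sqrt{\mu(1-\mu)})=\cos\eta_L$); alternatively the transverse frequency of a frozen boundary coordinate tends to $0$, which gives $R_{\mathrm{ext}}^{\mathrm{M}}\to\infty$ as $\eta=0$ merges with the interior equilibrium at $c=c_h=2\mu-1$, and, in the symmetric case $\mu=1/2$, gives $R_{\mathrm{ext}}^{\mathrm{E}}\to\infty$ as the transverse frequency at $\eta=\pi$ vanishes when $c\to0$. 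Finally $R=0$ occurs when the frozen coordinate $\xi\equiv0$ (resp.\ $\xi\equiv\xi_\ast$) loses transverse stability, so $\tau_\xi\to\infty$: this produces $R_{\mathrm{dou}}\to0$ and $R_{\mathrm{ell}}\to0$ at $c=c_e=-1$, where the segment orbit and the elliptic orbit exchange stability.

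The principal obstacle is strict monotonicity in $c$. In every case one factor of $R$ is the explicit elementary period, so monotonicity reduces to controlling the single elliptic integral of the free coordinate, whose integrand, endpoints (the moving turning points) and parameter $g(c)$ all depend on $c$. I would differentiate this period under the integral sign and show that, after the uniformizing substitution $w=\cosh\xi$ or $z=\cos\eta$ bringing it to the form $\oint dw/\sqrt{(w^2-1)(cw^2+2w-g)}$, the $c$-derivative of the integrand keeps a constant sign throughout the admissible range; equivalently one may recast $R(c)$ as a ratio of complete elliptic integrals and invoke the known monotonicity of such ratios. Verifying the uniform sign of these derivatives, simultaneously in $c$, in $g(c)$ and along the moving branch points, is the delicate analytic step, whereas the limiting values and the bifurcation structure fall out of the discussion above.
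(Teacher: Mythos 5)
Your overall strategy is the same as the paper's: separate the flow in elliptic coordinates, write $\tau_\xi$ and $\tau_\eta$ as one--dimensional period integrals, observe that on each critical line one of the two quartics acquires a double root so that the corresponding period degenerates to an elementary (transverse harmonic) one while the other remains a complete elliptic integral, and read off the boundary values from the degenerations of the modulus (modulus $\to 0$ giving $K=\pi/2$ and the limit $1$; modulus $\to 1$ giving the logarithmic divergence and the limit $\infty$; vanishing transverse frequency giving $0$ or $\infty$). Your identification of which coordinate is frozen on each of $l_1,\dots,l_5$, and your explanations of the limits in (a)--(f), are all consistent with what the paper does after adjusting for your sign convention on $G$.

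The genuine gap is that the strict monotonicity claims --- which are the actual content of parts (a), (b), (c), (d) and (f) --- are only announced, not proved. You yourself flag this as ``the delicate analytic step'' and offer two possible routes, but neither is carried out, and neither is routine. Differentiating the period $\oint dw/\sqrt{(w^2-1)(cw^2+2w-g)}$ under the integral sign is problematic because the integrand is singular exactly at the turning points, which move with $c$ (and $g=g(c)$ moves too), so the claim that ``the $c$-derivative of the integrand keeps a constant sign'' neither justifies the interchange nor accounts for the endpoint contributions. And there is no single off-the-shelf monotonicity theorem for ``ratios of complete elliptic integrals'' that covers all six cases: the paper has to reduce each $R$ to an explicit one-parameter expression such as $R_{\text{int}}=\tfrac{2}{\pi}\sqrt{1+r_2^2}\,K(r_2)$ or $R_{\text{ext}}^E=\tfrac{\pi}{2}\big(\sqrt{1-2k_1^2}\,K(k_1)\big)^{-1}$, compute the sign of $dk^2/dc$ in each case, and then establish the sign of expressions like $K(k)-(1-2k^2)\,\partial K/\partial k^2$ by combining the power series for $K$ and its derivative termwise. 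Without these case-by-case computations (or an equivalent substitute) the proposition is not proved; everything else in your outline is essentially the paper's argument.
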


Throughout the paper, we fix some conventions. The symplectic form is given by $ \omega = \sum dp_j \wedge dq_j$ and the Hamiltonian vector field of a Hamiltonian $H$ is defined by $\omega( X_H, \cdot ) = -dH$.  In the following theorem, the Conley-Zehnder index is the one introduced by Hofer-Wysocki-Zehnder \cite[chapter 3]{HWZ}, which coincides with the transversal Conley-Zehnder index if a periodic orbit is nondegenerate.

\begin{Theorem}
\label{thm:formula} Assume that an energy is less than the critical Jacobi energy.  Then  the $2N$-th iteration of the interior collision orbit  is    nondegenerate if and only if   $2NR_{\text{int}} \notin \Z$ and its Conley-Zehnder index is given by
\begin{equation*}
\mu_{\text{CZ}}(\gamma^{2N}_{\text{int}}) = 1 + 2\max \left \{ k \in \Z : k < 2NR_{\text{int}} \right \} .
\end{equation*}

The $2N$-th iteration of the exterior collision orbit is   nondegenerate if and only if $2N/R_{\text{ext}} \notin \Z$. The Conley-Zehnder index is given by
 \begin{equation*}
\mu_{\text{CZ}}(\gamma_{\text{ext}}^{2N}) = 1+2\max \left \{k \in \Z : k < 2N/R_{\text{ext}} \right \} .
\end{equation*}

In particular, the doubly-covered interior collision orbit fails to be nondegenerate if and only if $R_{\text{int}} = k/2$, where $ k \geqslant 2$. For the energy $c$ at which $R_{\text{int}} \in ( (k-1)/2, k/2)$, its Conley-Zehnder index is given by $2k-1$. On the other hand, the doubly-covered exterior collision orbit is always nondegenerate and its Conley-Zehnder index equals 3.
\end{Theorem}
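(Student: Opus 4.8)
The plan is to reduce the whole computation to the transversal linearized dynamics of the two collision orbits after regularization, exploiting that the system separates in the elliptic coordinates $(\xi,\eta)$. First I would fix the simultaneous regularization of the two centers. Since the natural Levi-Civita type regularization is a two-to-one branched cover over the collision points, a single traversal of a collision orbit does not lift to a closed loop, and only an even iterate closes up; this is the structural reason why the statements are phrased for $\gamma_{\text{int}}^{2N}$ and $\gamma_{\text{ext}}^{2N}$ rather than for all iterates. Making this precise is the first step: the $2N$-th iterate of each collision orbit becomes a smooth closed orbit on the regularized hypersurface, along which (using the natural symplectic trivialization coming from the regularized picture) the HWZ index of \cite{HWZ} is defined.

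Next, using separability, I would split the linearized flow along each regularized collision orbit symplectically into the block spanned by $X_H$ and $X_G$ — on which the monodromy is the unipotent shear carried by the Liouville foliation — and a transversal two-dimensional symplectic block. Along the interior collision orbit the variable $\eta$ oscillates with period $\tau_\eta$ while $\xi$ sits at its degenerate value, so the transversal block is governed by the linearized $\xi$-oscillation and accumulates, per single period of the orbit, a rotation by $2\pi R_{\text{int}}=2\pi\tau_\eta/\tau_\xi$; for the exterior orbit the roles of $\xi$ and $\eta$ are interchanged and the per-period transversal rotation is $2\pi/R_{\text{ext}}=2\pi\tau_\xi/\tau_\eta$. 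Concretely this amounts to solving the periodic Jacobi equation in the transversal variable by Floquet theory and reading off the rotation from the frequency ratio. Consequently the transversal monodromy of $\gamma_{\text{int}}^{2N}$ is conjugate to a rotation by total angle $2\pi\cdot 2NR_{\text{int}}$, and that of $\gamma_{\text{ext}}^{2N}$ to a rotation by $2\pi\cdot 2N/R_{\text{ext}}$.

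The index statements then follow from the standard normalization of the transversal Conley-Zehnder index of a rotation: a path of symplectic matrices starting at the identity that rotates by total angle $2\pi s$ has eigenvalue $1$ exactly when $s\in\Z$, so $\gamma_{\text{int}}^{2N}$ is nondegenerate iff $2NR_{\text{int}}\notin\Z$ and $\gamma_{\text{ext}}^{2N}$ iff $2N/R_{\text{ext}}\notin\Z$; and when $s\notin\Z$ the index equals $1+2\lfloor s\rfloor=1+2\max\{k\in\Z : k<s\}$, which gives the two displayed formulas once one checks that the unipotent $X_H$–$X_G$ block contributes no additional rounding. For the special cases $N=1$ I would feed in the bounds on the rotation functions from Proposition \ref{prop1}: since $R_{\text{int}}>1$ on $(-\infty,c_J)$, degeneracy of $\gamma_{\text{int}}^{2}$ occurs exactly at $2R_{\text{int}}=k\in\Z$, i.e. $R_{\text{int}}=k/2$ with $k\ge 2$, and on the interval $R_{\text{int}}\in((k-1)/2,k/2)$ one has $\lfloor 2R_{\text{int}}\rfloor=k-1$, giving index $2k-1$; whereas for $c<c_J$ the exterior rotation number stays in $(1,2)$ (an input from the study of the rotation functions behind Proposition \ref{prop1}), so $2/R_{\text{ext}}\in(1,2)$ is never an integer and $\lfloor 2/R_{\text{ext}}\rfloor=1$, yielding that $\gamma_{\text{ext}}^{2}$ is always nondegenerate of index $3$.

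The main obstacle is the analysis at the collisions: establishing that after regularization the collision orbits are genuinely smooth closed orbits with elliptic (not parabolic or hyperbolic) transversal behaviour, that precisely the even iterates close up, and that passing through the regularized collision points contributes no spurious rotation beyond the $2\pi R_{\text{int}}$ (resp. $2\pi/R_{\text{ext}}$) already bookkept per period. Equivalently, one must ensure that the degenerate direction of the singular Liouville leaf contributes only the unipotent shear and does not shift the Conley-Zehnder count; this is where a careful local model of the regularization, together with consistency with the rotation-number asymptotics of Proposition \ref{prop1}, does the real work.
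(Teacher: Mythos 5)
Your proposal follows essentially the same route as the paper: regularize via the doubly-covered elliptic coordinates, use separability to reduce the linearized flow on the contact planes along each collision orbit to a rotation, identify its frequency with $2\pi/\tau_\xi$ (resp.\ $2\pi/\tau_\eta$), apply the normalization $\mu_{\text{CZ}}=1+2\max\{k\in\Z:k<s\}$ for a rotation through total angle $2\pi s$, and feed in the rotation-number bounds for the doubly-covered cases. Two points need repair, however. First, the symplectic splitting into ``the block spanned by $X_H$ and $X_G$'' plus a transversal block does not exist along the collision orbits: these are precisely the singular leaves on which $dH$ and $dG$ are linearly dependent, so that span is one-dimensional there and there is no unipotent shear block. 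The paper sidesteps this by trivializing the contact structure $\ker\alpha\cap\ker dK$ directly with $\partial_\lambda,\partial_{p_\lambda}$ (interior) resp.\ $\partial_\nu,\partial_{p_\nu}$ (exterior); separability then makes the linearized flow on this plane a constant-coefficient harmonic oscillator (no Floquet argument is needed), elliptic because $1+c<0$ resp.\ $\pm(1-2\mu)-c>0$, whose period is exactly the $\lambda$- resp.\ $\nu$-period. Once you work on the contact structure your worry about ``additional rounding from the unipotent block'' is vacuous, but the $X_H$--$X_G$ framing should be replaced by this one.

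Second, the input $R_{\text{ext}}\in(1,2)$ for all $c<c_J$ is not ``behind Proposition \ref{prop1}'': that proposition only gives $R_{\text{ext}}>1$ and monotonicity in $c$, and says nothing about the value at $c_J$. The upper bound is a separate quantitative estimate (Lemma \ref{lemma:exteriorrotation} in the paper), obtained by reducing to the Moon component via Corollary \ref{cor:exterior} and showing by hand that $k_1^2<3/8$ at $c=c_J$, so that $R^M_{\text{ext}}(c_J)<2$. Without this estimate the assertion that the doubly-covered exterior collision orbit is always nondegenerate with Conley-Zehnder index $3$ is unsupported; it is a genuine missing step in your argument rather than a consequence of what you cite. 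The remainder of your outline (even iterates being the contractible ones, the crossing/rotation bookkeeping, and the interior case $R_{\text{int}}>1$ giving index $2k-1$ on $R_{\text{int}}\in((k-1)/2,k/2)$) matches the paper's proof.
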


As a corollary of Theorem \ref{thm:formula}, we obtain the following.

\begin{Theorem}
\label{thm:index}
For energies below the critical Jacobi energy, the universal cover of the regularized energy hypersurface is dynamically convex, namely all  periodic Reeb orbits  have Conley-Zehnder indices at least 3. 
\end{Theorem}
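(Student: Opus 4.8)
The plan is to deduce the statement from Theorem \ref{thm:formula} by a reduction to the cover together with a classification of periodic orbits via the integrable structure. Below the critical Jacobi energy the Hill's region $\mathcal{K}_c$ splits into the two bounded components $\mathcal{K}_c^E$ and $\mathcal{K}_c^M$, so $H^{-1}(c)$ is disconnected and each piece encloses a single collision locus and behaves like a Kepler-type (one-center) surface. I would first regularize the two collisions (by a Moser/Levi-Civita type construction) and identify each component of the regularized hypersurface as $\mathbb{RP}^3$, whose universal cover is $S^3$; the deck group is the $\Z_2$ generated by the regularization involution. Dynamical convexity then becomes the assertion $\mu_{\text{CZ}}(\gamma)\geq 3$ for every periodic Reeb orbit $\gamma$ on $S^3$, and since the covering $S^3\to\mathbb{RP}^3$ sends a periodic Reeb orbit upstairs to a non-contractible simple orbit downstairs traversed twice, the orbits on the cover are exactly the \emph{even} iterates of the orbits on $\mathbb{RP}^3$ — which is precisely why Theorem \ref{thm:formula} is phrased for the $2N$-th iterates. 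Using complete integrability, every such orbit projects to a solution lying on a leaf of the Liouville foliation, so it is either one of the critical orbits of Proposition \ref{prop1} (a singular leaf) or it lies on a regular Liouville torus, carrying a periodic orbit precisely when the rotation number $R=\tau_\eta/\tau_\xi$ is rational.

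For the critical orbits the estimate is immediate from Theorem \ref{thm:formula} and Proposition \ref{prop1}. Below $c_J$ we have $R_{\text{int}}>1$ by Proposition \ref{prop1}(a), so $2NR_{\text{int}}>2$ for every $N\geq 1$ and
\[
\mu_{\text{CZ}}(\gamma^{2N}_{\text{int}}) = 1 + 2\max\{k\in\Z : k<2NR_{\text{int}}\} \geq 5 .
\]
For the exterior collision orbits Proposition \ref{prop1}(b)--(c) give $R_{\text{ext}}>1$ in this range, while the value $3$ recorded in Theorem \ref{thm:formula} for the doubly-covered orbit forces $R_{\text{ext}}<2$; hence $2N/R_{\text{ext}}>1$ for all $N\geq 1$ and $\mu_{\text{CZ}}(\gamma^{2N}_{\text{ext}})\geq 3$, with equality exactly in the doubly-covered case. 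The remaining critical orbits — double-collision, hyperbolic and elliptic — do not arise below $c_J$ by parts (d)--(f) of Proposition \ref{prop1} and the geometry of the Hill region, so they require no discussion.

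The main obstacle is the regular, resonant-torus orbits, which Theorem \ref{thm:formula} does not treat directly. Here I would compute the transverse Conley-Zehnder index of a periodic orbit on a Liouville torus from its linearized return map, which in the separating elliptic coordinates $(\xi,\eta)$ becomes block-diagonal into a $\xi$-block and an $\eta$-block; the index is then read off from the two winding numbers, both governed by the frequency ratio $R$ and its derivative along the foliation. The key point is that each Morse-Bott family of resonant orbits limits, as the torus parameter tends to a singular value, onto the bounding collision orbit, and $R$ varies monotonically by Proposition \ref{prop1}; a continuity-and-monotonicity argument together with the iteration (Bott) formula should then pin the index of every resonant-torus orbit above the values already established for the critical orbits, hence $\geq 3$. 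Making the degeneration of the linearized flow near the singular leaves precise — so that the neutral torus directions and the one twisting transverse direction are correctly matched to the regularized collision orbit — is the step I expect to demand the most care, since it is exactly where the regularization and the collapse of one of the two oscillations interact.
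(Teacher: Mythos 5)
Your treatment of the critical (collision) orbits is fine and agrees with the paper's, but the step you yourself flag as the main obstacle --- the regular resonant-torus orbits --- is where the argument breaks down, and the ingredients you cite cannot close it. Your ``continuity-and-monotonicity argument'' needs the rotation function $R_c(g)$ to vary monotonically \emph{across the Liouville foliation at fixed energy} $c$, so that each $T_{k,l}$-family is Morse--Bott nondegenerate and its index can be propagated by homotopy invariance from the bounding critical orbit. Proposition \ref{prop1} gives monotonicity of the rotation numbers of the critical orbits \emph{in the energy} $c$ only; it says nothing about the $g$-dependence of $R_c$ on a fixed hypersurface. That $g$-monotonicity is precisely the Dullin--Montgomery conjecture: the paper proves it only in the $S'$-region (Lemma \ref{monotoneSs}), and for the $S$-region establishes only the weaker statement that $R_S$ has no critical points (Lemma \ref{lemmanocircle}), which does not by itself give Morse--Bott nondegeneracy of every resonant torus there. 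So the index bound for the torus-type orbits cannot be derived from the inputs you list, and the ``read off the index from the two winding numbers'' step for a degenerate orbit on a Liouville torus is left entirely unjustified.

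The paper circumvents this with an external ingredient you do not mention: by \cite[theorem 1.1]{Kim2} the doubly-covered elliptic coordinates give a $2$-to-$1$ symplectic embedding under which the regularized Moon component is convex for $c<c_J$, so that component is dynamically convex outright by \cite[theorem 3.7]{HWZ}, with no orbit-by-orbit computation. For the Earth component the paper then uses that every $(g,c)$ in the $S$-region carries tori in \emph{both} components with the same rotation number, so the Earth-side torus orbits inherit the bound $\mu_{\text{CZ}}\geqslant 3$ from the Moon side; the $S'$-region orbits are then reached via the proven monotonicity there (Lemma \ref{monotoneSs} together with Remarks \ref{ramket} and \ref{ramkslope}), which yields Morse--Bott nondegeneracy on a neighbourhood of $S'\cup l_2$ and hence constancy of the index along each family, anchored at the collision orbits of Theorem \ref{thm:formula}. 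Unless you either prove the Dullin--Montgomery monotonicity in the $S$-region or import the convexity result of \cite{Kim2}, the bound for the regular orbits remains unproved.
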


\;

 The concept of dynamical convexity is introduced by Hofer-Wysocki-Zehnder  \cite[definition 3.6]{HWZ}.  While  convexity  is not preserved under symplectomorphisms, dynamical convexity is a symplectic invariant. By the results of \cite{HWZ}  if $S^3$ is equipped with a dynamically convex contact form, then it admits an open book decomposition whose pages are (disk-like) global surfaces of section for the associated Reeb flow. By a  global surface of section we mean an embedded disk $D \subset S^3$ having the following properties:

(i) the boundary $\partial D$ is a periodic Reeb orbit, which is called the spanning orbit,

(ii) the Reeb vector field is transverse to the interior of $D$,

(iii) every orbit, other than the spanning orbit, intersects the interior of $D$ in forward and backward time.\\
Note that all the global surfaces of section above are spanned by the binding of the open book decomposition. This binding  is a nondegenerate  periodic orbit which is unknotted,  has self-linking number -1 and  is of Conley-Zehnder index 3. 

\;\;

By varying the energy level $c$  one can construct a homotopy of the exterior collision orbits. In particular, as $c \rightarrow -\infty$ the exterior collision orbits are getting close to a simply-covered geodesic on the two-sphere \cite[section 2]{Moser}. Since via Levi-Civita regularization \cite{Levi} geodesic flows on the two-sphere lift to Hopf links, their double covers have self-linking number -1. Since self-linking number is a homotopy invariant, we conclude that self-linking number of the doubly-covered exterior collision orbits equals -1. Then the assertion of Theorem \ref{thm:index} implies that for each $c<c_J$, the universal cover of each component of the regularized energy hypersurface admits an open book decomposition whose binding is the doubly-covered exterior collision orbit. In particular, each doubly-covered exterior collision orbit bounds a global surface of section of the Reeb flow. In fact, in view of \cite[theorem 1.7]{Umberto}, the same argument also holds true for the doubly-covered interior collision orbits.

\begin{Remark} \rm We remark that the roles of the interior and the exterior collision orbits are reminiscent respectively of the roles of the direct and the retrograde circular orbits in the rotating Kepler problem, see \cite{RKP}.
\end{Remark}

\;\;

\textbf{Acknowledgments.} First and foremost, I would like to express my gratitude to  advisor Urs Frauenfelder for   encouragement and support as well as interesting me in this subject. I also thank to Yehyun Kwon and  Junyoung Lee for fruitful discussions and the unknown referee whose comments helped me give the arguments followed by Theorem \ref{thm:index}.  Furthermore, I want to thank to the Institute for Mathematics of University of Augsburg for providing a supportive research environment. This research was supported by DFG grants CI 45/8-1 and FR 2637/2-1.

\;\;

\section{The   Euler problem of two fixed centers}
\label{section:Euler}
 
As mentioned in the introduction, we introduce the elliptic coordinates which  are  defined by
\begin{eqnarray*}
\xi=  |q- \text{E} | + |q- \text{M}| \in [1,\infty) \;\;\;\text{ and  }\;\;\;\eta=   |q- \text{E}| - |q- \text{M}| \in [-1,1] .
\end{eqnarray*}
In the $(q_1, q_2)$-plane, $\xi =$ const or $\eta = $ const represents an ellipse or a hyperbola, respectively.   The corresponding momenta ${p_{\xi} }$ and ${ p_{\eta}}$ are determined by the relation $ p_1 d q_1 + p_2 dq_2 = p_{\xi} d \xi + p_{\eta} d \eta$  and then the Hamiltonian in the elliptic coordinates is of the form
\begin{equation*}
H = \frac{ H_{\xi} + H_{\eta}    }{\xi^2 - \eta^2 },
\end{equation*}
where $ H_{\xi} = 2(\xi^2 -1)p_{\xi}^2   - 2 \xi$ and $H_{\eta} = 2 (1-\eta^2)p_{\eta}^2  + 2(1-2\mu)\eta$. Following the convention of Strand-Reinhardt \cite{Strand} we choose the first integral $G$  by
\begin{equation*}
G = - \frac{ \eta^2 H_{\xi} + \xi^2 H_{\eta} }{\xi^2 - \eta^2 } .
\end{equation*}
Given ${ (G,H) = (g,c) }$,    the momentum variables are given by
\begin{equation}
\label{eq:momentum}
p_{\xi}^2 = \frac{ c\xi^2 + 2\xi +g}{ 2 ( \xi^2 -1)} \;\;\;\text{ and  }\;\;\; p_{\eta}^2 = \frac{  c\eta^2 +2(1-2\mu)\eta + g}{2 ( \eta^2-1)}.
\end{equation}
We  now define two functions
\begin{equation}
\label{eq:functions}
f(\xi) = ( c\xi^2 + 2 \xi + g )(\xi^2 -1) \;\;\;\text{ and  }\;\;\;h(\eta) = ( c\eta^2 + 2(1-2\mu)\eta + g )(  \eta^2 -1 ).
\end{equation}
The function ${f}$ (or ${h}$) has four roots: ${\pm 1}$ and 
$${ \xi_{1,2} = \frac{ -1 \pm \sqrt{ 1 - gc}}{c}} \;\; \bigg(\text{or} \;\;{ \eta_{1,2} = \frac{ -(1-2\mu) \pm \sqrt{ (1-2\mu)^2 - gc}}{c}}\bigg).$$
According to ranges of $\xi_{1,2}$ and $\eta_{1,2}$, the classically accessible regions in the lower-half $(g,c)$-plane are divided into four($\mu \neq 1/2$) regions or three($\mu = 1/2$) regions, see Figure \ref{fig:region}.  For details to obtain those regions, see for example \cite{Strand, Bifurcation, Kim}. Following the notations from \cite{Charlier, Pauli}, the regions are labeled by ${S'}$, ${S}$(satellite), ${L}$(lemniscate), and ${P}$(planetary).  In   ${S'}$, the satellite is confined to $\mathcal{K}_c^E$ while it also can move in  $\mathcal{K}_c^M$ in ${S}$. In  ${L}$, the movement is bounded by the ellipse ${ \xi = \xi_1 }$. Finally, the satellite moves between two ellipses ${ \xi = \xi_1 }$ and $\xi = \xi_2$ in ${P}$, see Figure \ref{motionsd}. These regions are bounded by the five critical curves
\begin{eqnarray*} 
&&l_{1,2} : c=-g\pm2(1-2\mu),\;\;\; \;\;\; \;\;\; \;\;\; \;\;\; l_3 : c=-g-2,\\
&& l_4 : gc=(1-2\mu)^2,~c_J<c<c_h,\;\;\; \; l_5 : gc=1,~c_e<c.
\end{eqnarray*} 
All points on these five curves are  critical values of the energy momentum mapping ${(\xi,\eta) \mapsto ( G(\xi, \eta), H(\xi, \eta) ) }$, while  points  in the interior of each region  are  regular values, namely  they represent Liouville tori.   For the symmetric case, the lines ${l_1}$ and $l_2$ are identical and  the region ${S'}$ does not appear. The ranges of ${ \xi}$ and ${\eta}$ for the motions in each region are presented in Table \ref{table1}. 

\begin{figure}[h]
 \centering
 \includegraphics[width=0.5\textwidth, clip]{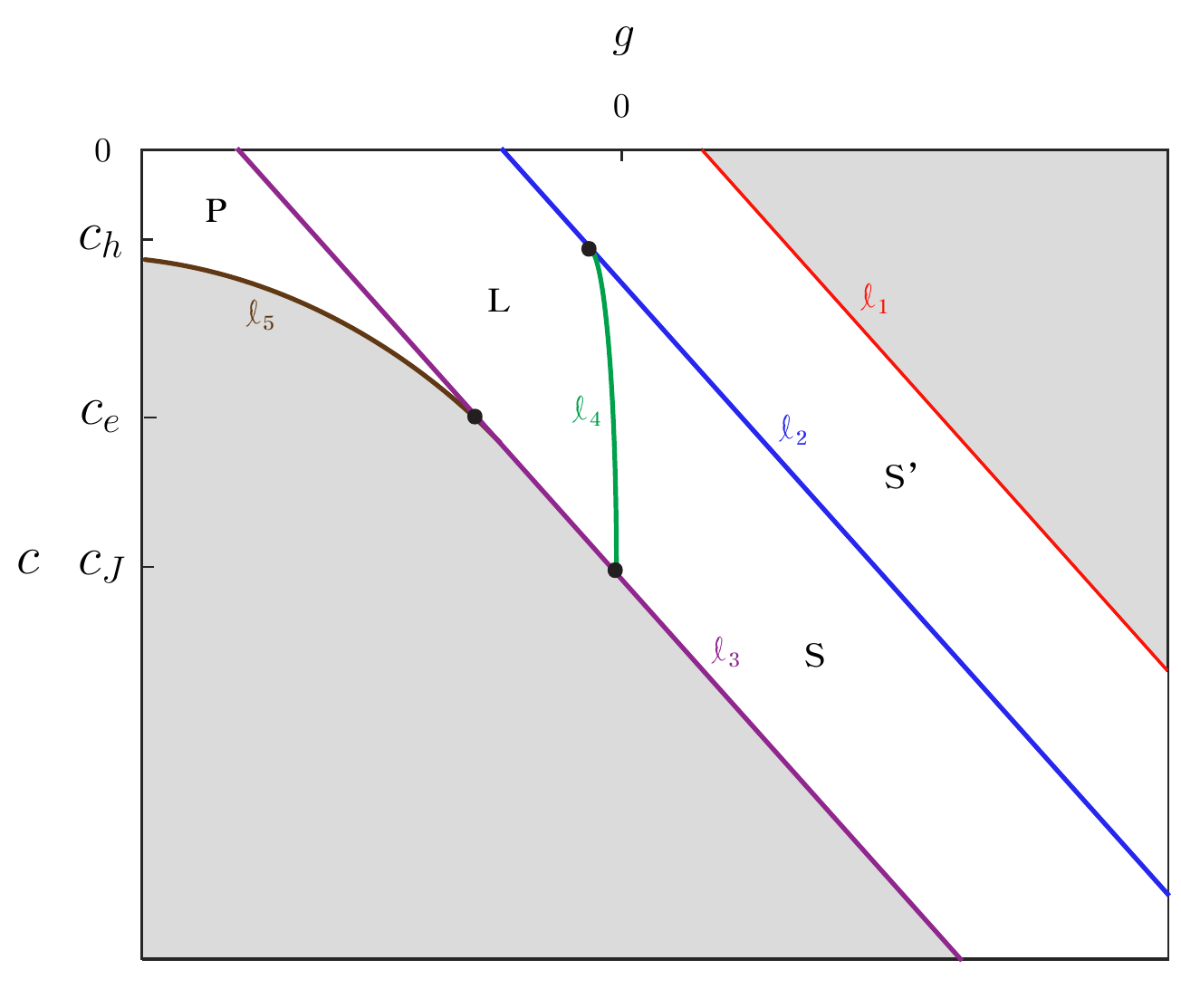}
 \caption{ For negative energies, each regular level of the energy-momentum mapping represents one of the four types of motions, which are labeled by $S'$, $S$, $L$ and $P$. The colored curves are the critical curves which divide the four regular regions. The shaded regions are classically forbidden. }
 \label{fig:region}
\end{figure}
\begin{figure}[h]
\begin{subfigure}{0.4\textwidth}
  \centering
  \includegraphics[width=0.9\linewidth]{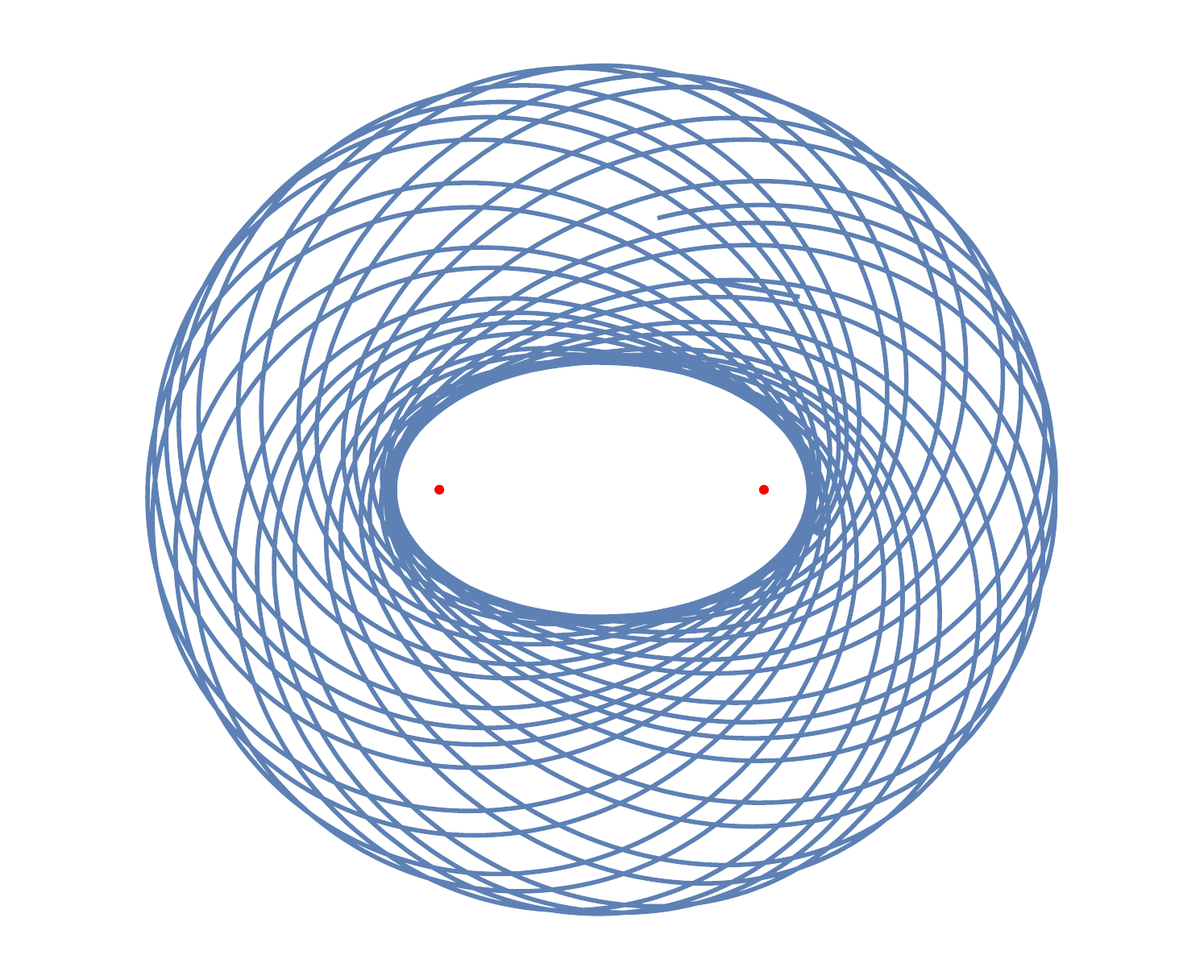}
  \caption{$P$-region}
\end{subfigure}
\begin{subfigure}{0.4\textwidth}
  \centering
  \includegraphics[width=0.9\linewidth]{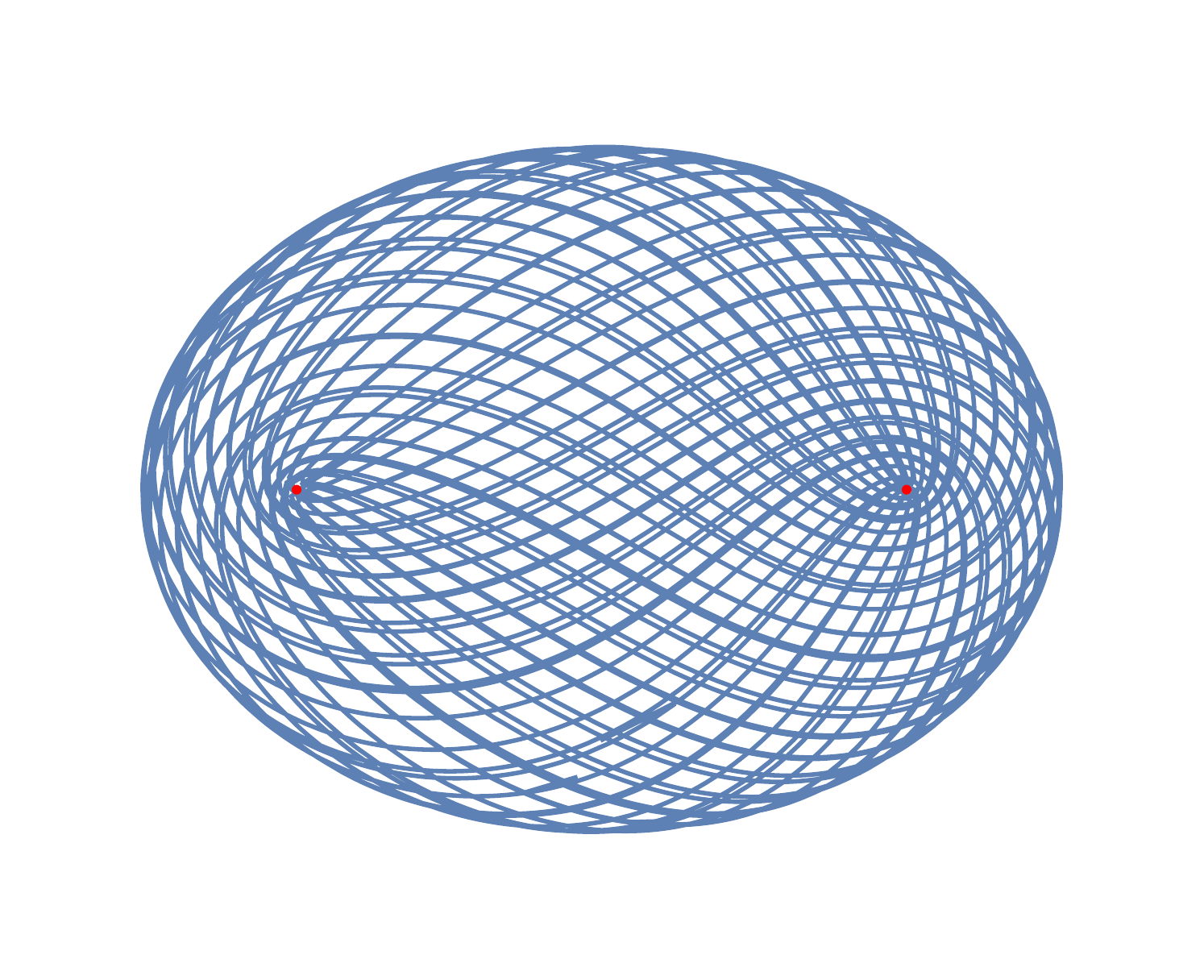}
  \caption{$L$-region}
\end{subfigure}
\begin{subfigure}{0.4\textwidth}
  \centering
  \includegraphics[width=0.9\linewidth]{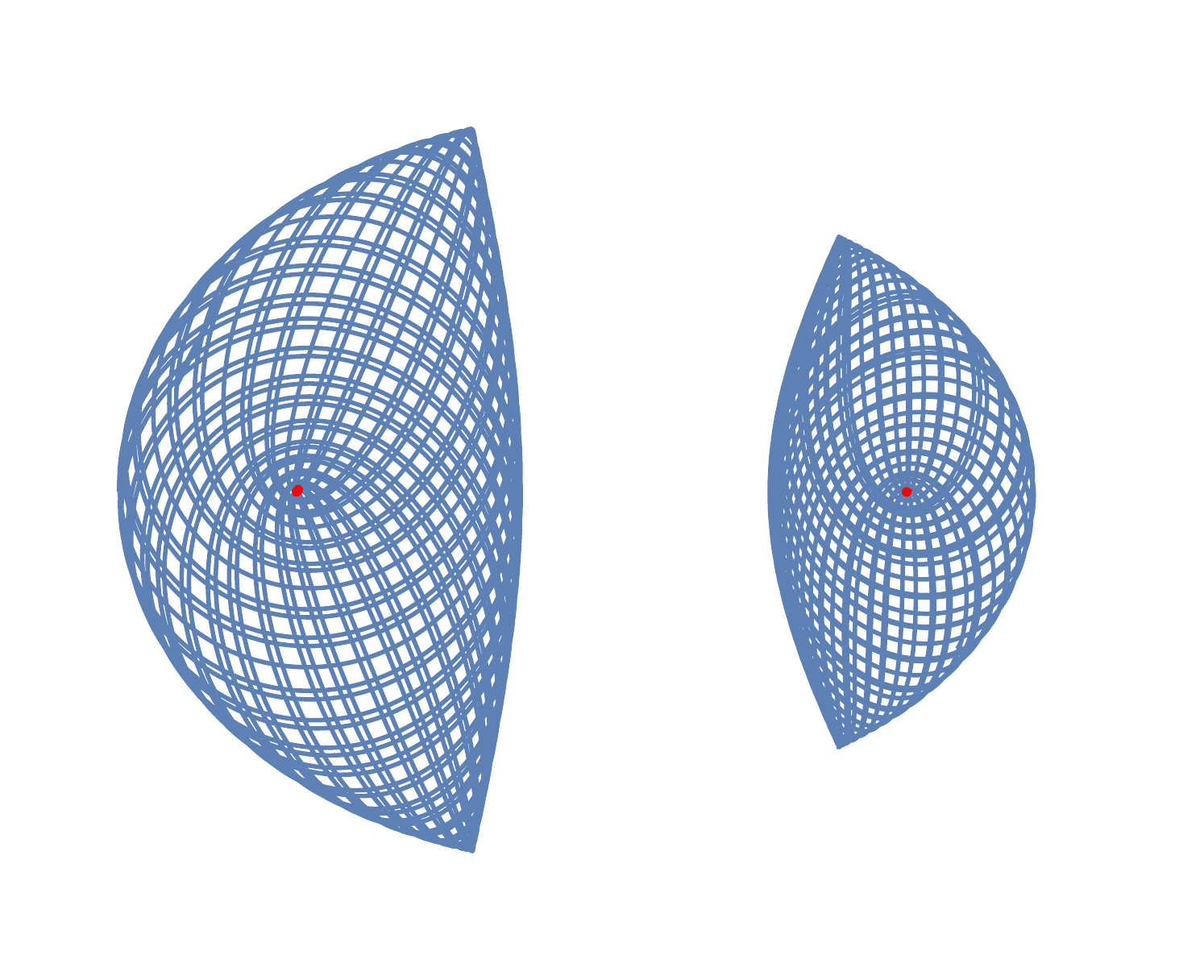}
  \caption{$S$-region}
\end{subfigure}
\begin{subfigure}{0.4\textwidth}
  \centering
  \includegraphics[width=0.9\linewidth]{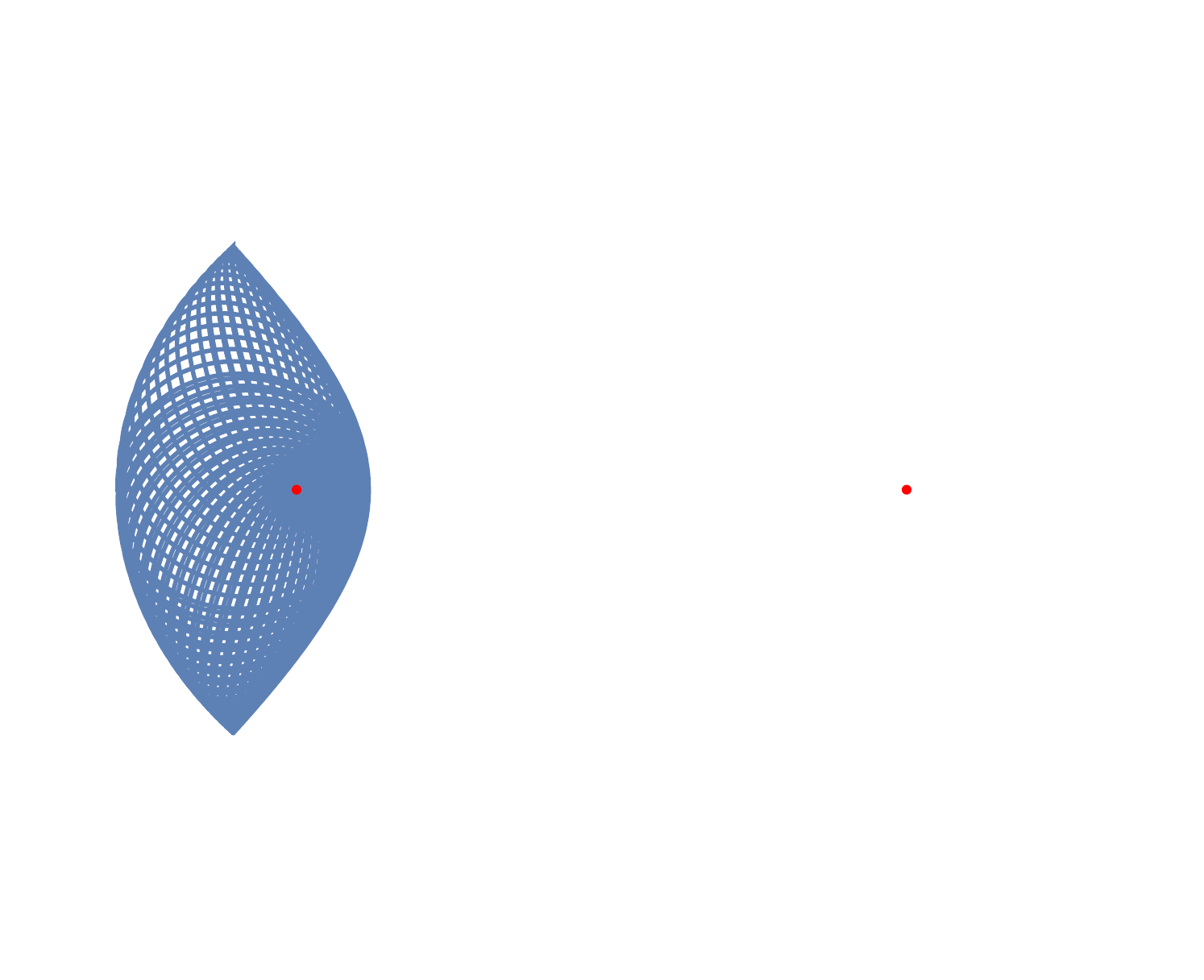}
  \caption{$S'$-region}
\end{subfigure}
\caption{  Typical orbits in the regular regions for $\mu = 1/4$  }
\label{motionsd}
\end{figure}

\begin{table}[t]  
\centering
\caption{ The ranges of the variables $\xi$ and $\eta$ in each regular region }
\begin{tabular}{llll}
Regions  & ${\xi}$-range & ${\eta}$-range & Ranges of roots \\
\hline
S$'$  &  (1, ${\xi_2 }$) & (${-1}$, ${\eta_1}$) & ${  -1<\xi_1 < 1 < \xi_2,~-1 < \eta_1 < 1 < \eta_2}$  \\
S, Earth   &  (1, ${\xi_2 }$) & (${-1}$, ${\eta_1}$)  & ${  -1<\xi_1 < 1 < \xi_2,~-1 < \eta_1 < \eta_2 < 1}$    \\ 
S, Moon &  (1, ${\xi_2 }$) & (${\eta_2}$,  1)  & ${  -1<\xi_1 < 1 < \xi_2,~-1 < \eta_1 < \eta_2 < 1}$  \\ 
L  &  (1, ${\xi_2 }$) & (${-1}$, 1)  &   ${ -1<\xi_1 < 1 < \xi_2,~-1  < 1 < \eta_1<\eta_2}$  if $(1-2\mu)^2 \geqslant gc$ \\
~& ~& ~&\;\;\;\;\; \;\;\;\;\;\;\;\;\;\;\;\; \;\;\;\;\;\; \; ${\text{ or }\eta_1, \eta_2 \text{ complex} }$ \; if $(1-2\mu)^2 < gc$ \\
P  &  (${\xi_1}$, ${\xi_2 }$) & (${-1}$, 1) &  ${ -1<1<\xi_1   < \xi_2 , ~-1  < 1 < \eta_1<\eta_2  }$  if $(1-2\mu)^2 \geqslant gc$ \\
~& ~& ~&\;\;\;\;\; \;\;\;\;\;\;\;\;\;\;\;\; \;\;\;\;\;\; \; ${\text{ or }\eta_1, \eta_2 \text{ complex} }$ \; if $(1-2\mu)^2 < gc$ \\
\end{tabular}
\label{table1}
\end{table}

\begin{Remark}\label{energyfola} \rm There are three specific energy values: the critical Jacobi energy  ${ c_J}$, ${c_e = -1}$ at which   ${ l_3}$ and $l_5$ intersect, and ${c_h = -1+2\mu}$ at which  $ l_2$ and $l_4$ meet. These distinguished  values are the energy levels at which the Liouville foliation on an energy hypersurface changes, see Figure \ref{fig:region}.
\end{Remark}

We now investigate the critical orbits. Every point on the line $l_1$ represents  the collision orbit ${\eta=-1 }$ in ${ \mathcal{K}_c^E}$. We call this orbit \textit{the exterior collision orbit} in the Earth component. A point on the line ${l_2}$ represents an orbit  in either ${\mathcal{K}_c^E}$ or      ${\mathcal{K}_c^M}$. An orbit in the Earth component is not a critical orbit. For an orbit near the Moon, it represents  the exterior collision orbit $\eta=1$ in   $\mathcal{K}_c^M$.  On the line ${l_3}$, for $c<c_J$, a point represents the collision orbit $\xi=1$ in either  $\mathcal{K}_c^E$ or   $\mathcal{K}_c^M$. Such an  orbit will be referred   to as \textit{the interior collision orbit}.  For ${ c > c_J}$, two interior collision orbits become connected and the satellite moves between two primaries, see Figure \ref{connectedinterior}. We call this orbit \textit{the double-collision orbit}.  For a point on the curve ${ l_4}$, the equation $c \eta^2 + 2 ( 1-2\mu) \eta + g = 0 $ has the common root which is positive: $  \eta_1 = \eta_2 = - ( 1-2\mu)/c > 0 $. This implies that the satellite moves along the hyperbola $ \eta = \eta_1 $, which is close to the Moon,  within the boundary ellipse $ \xi = \xi_1 $. We call this orbit \textit{the hyperbolic  orbit}.  Finally, on the curve ${ l_5}$  we have $ \xi_1 = \xi_2 $ and hence the orbit is an ellipse, which we call \textit{the elliptic  orbit}.

\begin{Remark} \rm The hyperbolic orbits are Lyapunov orbits, i.e., as $c \rightarrow c_J$ from above the family $\gamma_{\text{hyp}}^c$, $c \in (c_J, c_h)$, of the hyperbolic orbits converges uniformly to the critical point $L$ \cite[section 3]{Kim}. On the other hand,  the hyperbolic orbit degenerates to the exterior collision orbit in the Moon component as $c \rightarrow c_h$. Moreover, the elliptic orbit degenerates to the double-collision orbit as $c \rightarrow c_e$, see Figure \ref{hyperdeg}, \ref{ellipticdeg}.
\end{Remark}

\section{Rotation functions}

Note that the Hamiltonian has singularities at the Earth and the Moon and hence an energy hypersurface is not compact due to collisions. However, one can regularize this two-body collision by means of a suitable time rescaling as follows. 

Fix $c<0$. We now show how to regularize the dynamics on the energy level set $H^{-1}(c)$. Define the new Hamiltonian
\begin{equation*}
K := ( \xi^2 - \eta^2) (H-c) = H_{\xi} + H_{\eta} -c(\xi^2 -\eta^2).
\end{equation*}
For points at $K=0$ we have $\partial_{\sigma} K = ( \xi^2 - \eta^2) \partial_{\sigma} H$ for each $\sigma = \xi, \; \eta, \; p_{\xi},$ or $p_{\eta}$. Therefore, with the time scaling
\begin{equation*}
d t = ( \xi^2 - \eta^2 ) d \tau ,
\end{equation*}
orbits of $H$ with energy $c$ and time parameter $t$ correspond to orbits of $K$ with energy $0$ and time parameter $\tau$. Note that the energy hypersurface $H^{-1}(c)$  is compactified to $K^{-1}(0)$.  The equations of the momenta (\ref{eq:momentum}) and the Hamiltonian equations of $K$ on $K^{-1}(0)$ give rise to
\begin{equation*}
\begin{cases} \dot{\xi} = 4(\xi^2 -1)p_{\xi} = 2\sqrt{2}\sqrt{f(\xi)} \\  \dot{\eta} = 4(1- \eta^2 )p_{\eta} = 2\sqrt{2}\sqrt{h(\eta)} , \end{cases} 
\end{equation*}
where the dot denotes the differentiation with respect to $\tau$ and the functions $f$ and $h$ are defined as in (\ref{eq:functions}). It follows that 

\begin{eqnarray}
\label{eq:integral}
\int_{\xi_0}^{\xi(\tau)} \frac{ d \xi}{ 2 \sqrt{2}\sqrt{ f(\xi)}} = \tau - \tau_0 = \int_{\eta_0}^{\eta(\tau)} \frac{ d \eta}{2 \sqrt{2}\sqrt{ h(\eta)} }.
\end{eqnarray}

\begin{figure}[t]
\begin{subfigure}{0.9\textwidth}
  \centering
  \includegraphics[width=0.95\linewidth]{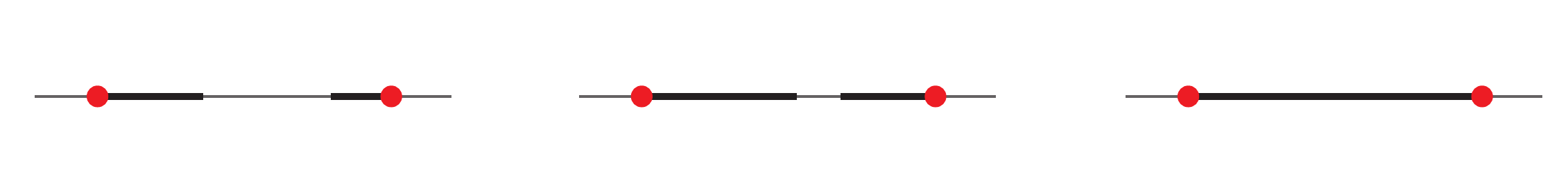}
  \caption{   }
  \label{connectedinterior}
\end{subfigure}
\begin{subfigure}{0.45\textwidth}
  \centering
  \includegraphics[width=0.9\linewidth]{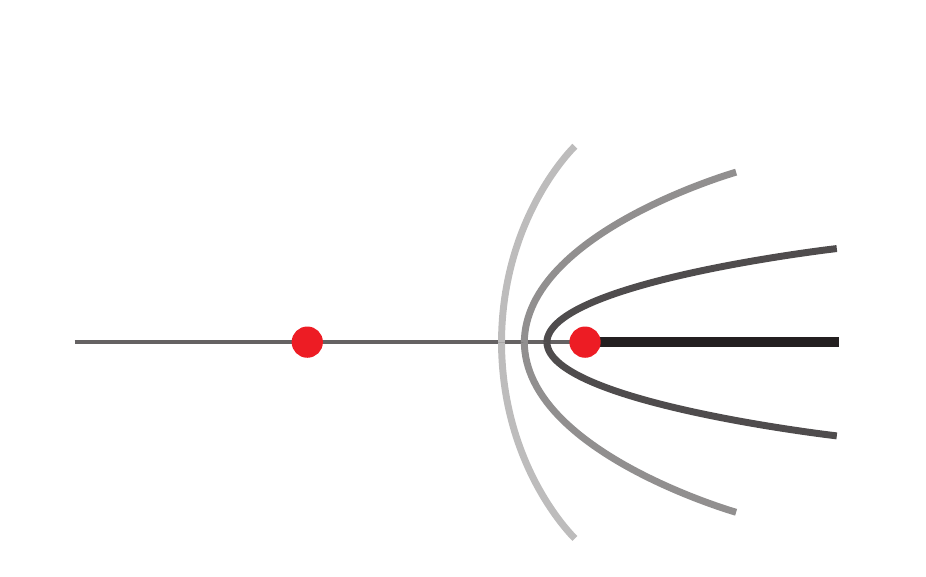}
  \caption{  }
 \label{hyperdeg}
\end{subfigure}
\begin{subfigure}{0.45\textwidth}
  \centering
  \includegraphics[width=0.9\linewidth]{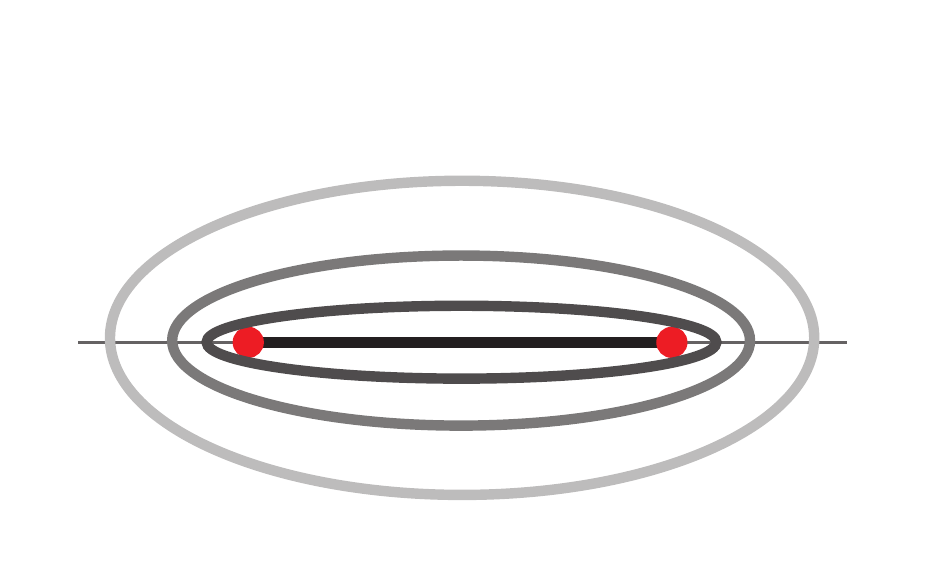}
  \caption{ }
 \label{ellipticdeg}
\end{subfigure}
\caption{  (a) The interior collision orbits become connected. (b) The hyperbolic orbit degenerates to the exterior collision orbit in the Moon component. (c)   The elliptic orbit degenerates to the double-collision orbit. }
\label{motions}
\end{figure}

\begin{Lemma}  
\label{lemma:periods}
The integrals (\ref{eq:integral}) over $[\sigma_{\text{min}}, \sigma_{\text{max}}]$, ${\sigma=\xi}$ or $\eta$, are given as follows.
\begin{eqnarray}
\label{integralxi}
\int_{\xi_{\text{min}}}^{\xi_{\text{max}}} \frac{d\xi}{ \sqrt{f(\xi)} } = \begin{cases} \displaystyle \frac{ 1} { \sqrt[4]{1 - gc} }  K(k_1)  &  \;\;\;\;\;\;\;\;\;\;\;\;\text{$S'$, $S$ and $L$-regions} \;\;\;\;\;\;\;\;\; \\  & \\ \displaystyle   \frac{ 2}{ \sqrt{ -g +c +  2\sqrt{ 1-gc}      }}    K(k_2) & \;\;\;\; \;\;\;\;\;\;\;\;\;\text{$P$-region} \end{cases}
\end{eqnarray} 
\begin{eqnarray}
\label{integraleta}
\;\;\;\;\;\;\;\; \int_{\eta_{\text{min}}}^{\eta_{\text{max}}} \frac{d\eta}{ \sqrt{h(\eta)} } = \begin{cases} \displaystyle \frac{1}{   \sqrt[4]{ (1-2\mu)^2 -gc}}K(r_1) & \text{$S'$-region} \\ & \\ \displaystyle      \frac{ 2}{\sqrt{ g-c+2\sqrt{ (1-2\mu)^2- gc}}}K(r_2) & \text{$S$-region} \\ & \\ \displaystyle     \frac{ 2}{\sqrt{-g +c +2 \sqrt{  ( 1-2\mu)^2 -gc }}}   K(r_3) & \text{$L$,  $P$-regions, $(1-2\mu)^2 \geqslant gc$} \\ & \\ \displaystyle \frac{ 2}{\sqrt[4]{ (g+c)^2 - 4(1-2\mu)^2}} K(r_4) & \text{$L$,  $P$-regions, $(1-2\mu)^2 < gc$}\end{cases}
\end{eqnarray}
where

\begin{eqnarray*}
&& k_1^2 =    \frac{1}{2} \begin{pmatrix} \displaystyle 1 - \frac{g-c}{2 \sqrt{1-gc}} \end{pmatrix}, \;\;\;\;\;\;\;\;\;\;\;\;\;\;\;\;\;\; k_2^2  =  \frac{ 1}{k_1^2} \\
&& r_1^2  =   \frac{1}{2} \begin{pmatrix} \displaystyle     1 - \frac{ g-c}{2\sqrt{ (1-2\mu)^2 -gc}}      \end{pmatrix} , \;\;\;\;\;  r_2^2 =  \frac{ g-c- 2\sqrt{ (1-2\mu)^2 -gc}}{ g-c+2\sqrt{ (1-2\mu)^2 -gc}}, \\  
&&  r_3^2  =     \frac{ 1}{r_1^2},  \;\;\;\;\; \;\;\;\;\;\; \;\;\;\;\;\; \;\;\; \;\;\; \;\;\; \;\;\; \;\;\; \;\;\; \;\;\; \;\;\;\;\;\; \; r_4^2 = \frac{ 1}{2}\bigg( 1  + \frac{ g-c}{ \sqrt{ (g+c)^2 - 4(1-2\mu)^2}}\bigg)
\end{eqnarray*}
and $K(k)$ is the complete elliptic integral of the first kind of modulus $k$, which can be expressed by the power series
\begin{equation}\label{ellipticequationpower}
K(k) = \frac{\pi}{2} \sum_{n=0}^{\infty} \bigg( \frac{ (2n-1)!!}{(2n)!!} \bigg)^2 k^{2n}.
\end{equation}
\end{Lemma}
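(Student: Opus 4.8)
The plan is to treat each integral as a complete elliptic integral of the reciprocal square root of a quartic and reduce it to Legendre normal form by the classical substitutions tabulated for such integrals (see, e.g., the tables of Byrd and Friedman). First I would factor the two quartics using the roots already recorded above: since $c\xi^2+2\xi+g=c(\xi-\xi_1)(\xi-\xi_2)$ and $\xi^2-1=(\xi-1)(\xi+1)$, one has $f(\xi)=c(\xi-\xi_1)(\xi-\xi_2)(\xi-1)(\xi+1)$, and likewise $h(\eta)=c(\eta-\eta_1)(\eta-\eta_2)(\eta-1)(\eta+1)$. Because $c<0$ and, on the relevant range read off from Table~\ref{table1}, the integrand $p_\sigma^2$ is nonnegative, on each integration interval the quartic is positive and the interval runs between two \emph{consecutive} real roots; hence each integral is a complete — rather than incomplete — elliptic integral of the first kind, which is why the answers are pure multiples of $K$.

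Next I would split into the two cases that actually occur. When all four roots are real (every sub-case except the last line of (\ref{integraleta})), I use the standard reduction: for real roots $\rho_1>\rho_2>\rho_3>\rho_4$ and integration over the top gap $(\rho_2,\rho_1)$,
\[
\int_{\rho_2}^{\rho_1}\frac{dx}{\sqrt{|c|\,(\rho_1-x)(x-\rho_2)(x-\rho_3)(x-\rho_4)}}=\frac{2}{\sqrt{|c|\,(\rho_1-\rho_3)(\rho_2-\rho_4)}}\,K(k),\quad k^2=\frac{(\rho_1-\rho_2)(\rho_3-\rho_4)}{(\rho_1-\rho_3)(\rho_2-\rho_4)},
\]
with the analogous expression when the relevant interval is instead the bottom gap $(\rho_4,\rho_3)$. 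Reading the root ordering region by region from Table~\ref{table1} — e.g.\ $\xi_2>1>\xi_1>-1$ over $(1,\xi_2)$ in the $S',S,L$ case and $\xi_2>\xi_1>1>-1$ over $(\xi_1,\xi_2)$ in the $P$ case — and substituting gives the prefactor and modulus in terms of root differences. When $\eta_1,\eta_2$ form a complex-conjugate pair (the case $(1-2\mu)^2<gc$), only $\pm1$ are real and I use the reduction for two real roots $a>b$ together with a complex pair $m_1\pm i\,m_2$: with $A^2=(a-m_1)^2+m_2^2$ and $B^2=(b-m_1)^2+m_2^2$,
\[
\int_b^a\frac{dx}{\sqrt{|c|\,(a-x)(x-b)\big[(x-m_1)^2+m_2^2\big]}}=\frac{2}{\sqrt{|c|\,AB}}\,K(k),\quad k^2=\frac{(a-b)^2-(A-B)^2}{4AB}.
\]

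Finally I would convert every root difference into the parameters $g,c,\mu$ via Vieta's relations $\xi_1+\xi_2=-2/c$, $\xi_1\xi_2=g/c$, $\eta_1+\eta_2=-2(1-2\mu)/c$, $\eta_1\eta_2=g/c$, together with $\xi_1-\xi_2=\mp 2\sqrt{1-gc}/c$ and $\eta_1-\eta_2=\mp 2\sqrt{(1-2\mu)^2-gc}/c$. For instance, in the $S',S,L$ case $|c|\,(\rho_1-\rho_3)(\rho_2-\rho_4)=|c|\,(\xi_2-\xi_1)\cdot 2=4\sqrt{1-gc}$ yields the prefactor $(1-gc)^{-1/4}$, and $(\rho_1-\rho_2)(\rho_3-\rho_4)=(\xi_2-1)(\xi_1+1)$ simplifies to give exactly $k_1^2=\tfrac12\big(1-\tfrac{g-c}{2\sqrt{1-gc}}\big)$; the $P$-case produces the reciprocal modulus $k_2^2=1/k_1^2$ and the displayed prefactor. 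In the complex case $m_1^2+m_2^2=\eta_1\eta_2=g/c$ gives $A^2B^2=\big((g+c)^2-4(1-2\mu)^2\big)/c^2$, hence the prefactor $2\big((g+c)^2-4(1-2\mu)^2\big)^{-1/4}$, and $k^2=\tfrac12\big(1+\tfrac{g-c}{\sqrt{(g+c)^2-4(1-2\mu)^2}}\big)=r_4^2$. The power series (\ref{ellipticequationpower}) for $K$ is just the binomial expansion of $(1-k^2\sin^2\phi)^{-1/2}$ integrated term by term over $[0,\pi/2]$, so it needs no separate argument.

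I expect the main obstacle to be organizational rather than conceptual: there are several sub-cases (two for the $\xi$-integral and four for the $\eta$-integral, with the $S$-region additionally split between the Earth and Moon components), and for each one must select the correct reduction formula according to whether the interval is the top or bottom gap between consecutive roots and whether the remaining pair of roots is real or complex, then push the root-difference algebra through to the compact closed forms. The single most delicate point is the complex-root case, where one must correctly form $A^2,B^2$ and choose between the two complementary candidate moduli — they sum to $1$ — so as to land on $r_4^2$ rather than $1-r_4^2$.
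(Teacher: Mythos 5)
Your proposal is correct and follows exactly the route the paper takes: the paper's proof is a one-line citation to the Byrd--Friedman handbook (and Demin), i.e.\ precisely the tabulated reductions of $\int dx/\sqrt{\text{quartic}}$ between consecutive real roots and in the complex-conjugate-pair case that you invoke, followed by the Vieta bookkeeping you describe. I spot-checked the $S'/S/L$ and $P$ cases of (\ref{integralxi}), the $S$-region and complex-root cases of (\ref{integraleta}), and your root-difference algebra reproduces the stated prefactors and moduli $k_1^2$, $k_2^2$, $r_2^2$, $r_4^2$.
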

\begin{proof}
See \cite{handbook, Demin}.
\end{proof}

\begin{Remark} \rm Since the absolute values of all the moduli in the lemma are less than 1, the corresponding power series (\ref{ellipticequationpower}) are convergent.
\end{Remark}

\begin{Remark}\label{ramket} \rm Since the $\eta$-motion in the Earth component is smooth,    the two ${\eta}$-integrals (\ref{integraleta}) for $S'$- and $S$-regions coincide on the line ${ l_2 : g = -c -2(1-2\mu)}$. Indeed, we  observe that on  $l_2$
\begin{eqnarray*}
 g-c&=& -2c -2(1-2\mu) = 2 ( -c-1+2\mu)\\
 \sqrt{ (1-2\mu)^2-gc}  &=&   | c+1-2\mu| =    - c -   1+2\mu  .
\end{eqnarray*}
It follows  that the first two integrals in (\ref{integraleta}) coincide with ${\pi}/{ ( 2 \sqrt{-c-1+2\mu} ) }.$ On the other hand, for the regions $L$ and $P$, the last two integrals in (\ref{integraleta}) are also identical for points $(g,c)$ satisfying $gc= (1-2\mu)^2$ with $c>c_h$. Indeed, if $gc=(1-2\mu)^2$, then we obtain $ 2 K(0) / \sqrt{ -g+c} = \pi / \sqrt{ -g+c}$.
\end{Remark}

We abbreviate by $\sqrt{2} \tau_{\xi}$ and $\sqrt{2} \tau_{\eta}$ the $\xi$- and $\eta$-integrals in the previous lemma. For each orbit, $\tau_{\xi}  $ and $\tau_{\eta} $ give rise to the periods of $\xi$- and $\eta$-oscillations.     A direct computation shows the  following lemma, where detailed computation will be included in Appendix \ref{appendix:lemma}.
\begin{Lemma}  
\label{lemma:periodsmonotone}
\rm{(Dullin-Montgomery, \cite[section 7]{syzygy})} Fix an energy $c$. Then in the regions $S'$ and $S$,  the periods $\tau_{\xi}$ and $\tau_{\eta}$ are decreasing in $g$  while both they are increasing in the region $P$. In the region $L$, the period $\tau_{\xi}$ is decreasing, but   $\tau_{\eta}$ is increasing. 
\end{Lemma}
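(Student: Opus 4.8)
The plan is to work directly from the closed formulas of Lemma \ref{lemma:periods}. Up to the universal constant $1/\sqrt{2}$, every period appearing in the statement has the shape $\tau_\sigma = A_\sigma(g,c)\,K(m_\sigma(g,c))$, where $A_\sigma$ is an explicit algebraic prefactor and $m_\sigma\in[0,1)$ is one of the moduli $k_1,k_2,r_1,\dots,r_4$. Fixing $c<0$ and regarding everything as a function of $g$ alone, the entire question reduces to determining the sign of $\partial_g\tau_\sigma$ in each region. The single analytic input I need about $K$ is that it is strictly increasing on $[0,1)$ with $K(0)=\pi/2$; this is immediate from the power series \ref{ellipticequationpower}, since differentiating term by term yields $K'(k)=\tfrac{\pi}{2}\sum_{n\ge 1}2n\big(\tfrac{(2n-1)!!}{(2n)!!}\big)^{2}k^{2n-1}>0$ for $k>0$.

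First I would record, for each region/variable pair listed, the two elementary derivatives $\partial_g A_\sigma$ and $\partial_g m_\sigma$. Because $c<0$, quantities such as $1-gc$ and $(1-2\mu)^2-gc$ are increasing in $g$, so each prefactor is monotone with a sign read off at a glance; a short computation (of the type deferred to Appendix \ref{appendix:lemma}) then expresses $\partial_g m_\sigma$ as an explicit rational function of $g,c$ whose sign can be determined on the admissible range of $g$ fixed by Table \ref{table1}. In every instance where $\partial_g A_\sigma$ and $\partial_g m_\sigma$ both point in the direction of the claimed monotonicity—for example when the prefactor and the modulus both decrease—the conclusion follows at once from $K,K'>0$ via the product rule
\[
\partial_g\tau_\sigma=(\partial_g A_\sigma)\,K(m_\sigma)+A_\sigma\,K'(m_\sigma)\,\partial_g m_\sigma .
\]

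The hard part will be the cases in which the prefactor and the modulus move in opposite directions, so that the two terms above compete and no sign is visible from monotonicity alone. Here the plan is to eliminate $K'$ using the standard identity
\[
K'(m)=\frac{E(m)-(1-m^{2})K(m)}{m\,(1-m^{2})},
\]
where $E$ is the complete elliptic integral of the second kind, and thereby rewrite $\partial_g\tau_\sigma$ as a single explicit combination of $K(m_\sigma)$ and $E(m_\sigma)$ with algebraic coefficients. The sign of $\partial_g\tau_\sigma$ then becomes an inequality between $K$ and $E$ at a common modulus, which can be settled using the elementary bounds $E(m)<K(m)$ and $(1-m^{2})K(m)<E(m)$ valid for $0<m<1$ (equivalently, via the monotonicity of $E/K$ and Legendre-type relations). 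I expect this $K$–$E$ sign analysis to be the genuine content of the lemma and the main obstacle; it is precisely the computation carried out in Appendix \ref{appendix:lemma}.

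As a robustness check I would keep in reserve the integral representation itself: desingularize each turning-point integral $\int d\xi/\sqrt{f(\xi)}$ and $\int d\eta/\sqrt{h(\eta)}$ by a trigonometric substitution that maps the oscillation onto the fixed interval $[-\pi/2,\pi/2]$ with a smooth positive integrand, after which differentiation under the integral sign is legitimate despite the square-root singularities at the turning points and $\partial_g\tau_\sigma$ becomes the integral of an explicit function with the motion of the turning points automatically incorporated. Since $\partial_g f(\xi)=\xi^{2}-1>0$ while $\partial_g h(\eta)=\eta^{2}-1<0$ on the respective ranges $\xi>1$ and $|\eta|<1$, this gives an independent handle on the sign; but because the turning points themselves drift with $g$, the naive pointwise estimate is not conclusive, which is exactly why the endpoint contributions—packaged cleanly inside the modulus $m_\sigma$ in the elliptic-integral formulation—must be tracked with care.
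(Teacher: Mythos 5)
Your plan is essentially the paper's own proof: Appendix \ref{appendix:lemma} also writes each period as (prefactor)$\times K(\text{modulus})$, differentiates in $g$ by the product rule, and reduces everything to the sign of an explicit combination $\alpha\,K(m)+\beta\,\partial K/\partial m^{2}$. The only methodological difference is the endgame: the paper settles those signs by inserting the power series (\ref{ellipticequationpower}) and observing that all coefficients of the resulting series have one sign (e.g.\ the identity (\ref{eq:compare})), whereas you propose to eliminate $K'$ in favour of $E$ and invoke $K$--$E$ inequalities. That substitution is legitimate, but your specific claim that the two elementary bounds $E(m)<K(m)$ and $(1-m^{2})K(m)<E(m)$ suffice is too optimistic in one of the four cases. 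Two of the combinations that actually arise do reduce to these bounds: the $S$-region $\eta$-period gives $(2-2m^{2})\,\partial K/\partial m^{2}-K=(E-K)/m^{2}<0$, and the combination $K+(2m^{2}-1)\,\partial K/\partial m^{2}$ reduces to $(1-m^{2})K+(2m^{2}-1)E>0$, which follows from $K>E$. But the $\eta$-period in the $L$/$P$ regions with $(1-2\mu)^{2}\geqslant gc$ requires $(2m^{2}-4)\,\partial K/\partial m^{2}+K<0$, which after the same substitution becomes
\begin{equation*}
2(1-m^{2})K(m)<(2-m^{2})E(m).
\end{equation*}
This is strictly sharper than what your two bounds yield: expanding near $m=0$, both sides equal $\tfrac{\pi}{2}\bigl(2-\tfrac{3}{2}m^{2}\bigr)+O(m^{4})$, so the inequality is decided only at order $m^{4}$, and the naive chain $2(1-m^{2})K<2E$ overshoots the target $(2-m^{2})E$. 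To close this case you must either prove this sharper $K$--$E$ inequality separately or fall back on the term-by-term power-series argument the paper uses; once that is supplied, your argument goes through.

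One further bookkeeping point: in several of the groupings the sign conclusion also uses the signs of $c$ and of $B=g-c$ on the relevant region (for instance the $S$-region computation splits off a term multiplied by $c<0$ against a manifestly positive bracket $2(1+r_{2}^{2})\,\partial K/\partial r_{2}^{2}+K$, and the last case of the appendix splits into $B<0$ and $B>0$). Your outline acknowledges that the admissible $g$-range from Table \ref{table1} must be used, so this is consistent with your plan, but it is where the region-by-region case analysis actually bites and should not be glossed over.
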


\begin{Definition} \rm Given a regular level $(g,c)$ of the energy momentum mapping we define {the rotation number} of the corresponding Liouville torus by the ratio  ${R = \big(\tau_{\eta} / \tau_{\xi}\big)(g,c) }$. Fixing an energy level $c$ and varying the intergal $g$ defines {the rotation function} $R_c := R( \cdot, c)$ on the $c$-energy hypersurface.
\end{Definition}

We also define the rotation number of an orbit by the same formula. Note that all the orbits on the same torus have the same rotation number since it depends only on the value $(g,c)$. An orbit is periodic if and only if the rotation number is rational. For this reason, a Liouville torus on which periodic orbits lie is called a {rational torus}. On the other hand, given a rotation number $R_0$ the equation $ R(g,c)=R_0$ defines a smooth family of Liouville tori of the fixed rotation number $R_0$. If $R = k/l$, where $k,l \in \Z$ are relatively prime, then the corresponding family of Liouville tori is referred to as {the $T_{k,l}$-torus family}. Each torus  family draws a smooth curve in the lower-half $(g,c)$-plane. By definition we have 
\begin{Lemma}  
\label{lemma:rotation}
The rotation function for each region is given as follows.
\begin{eqnarray*}
&R_{S'}&  = \sqrt[4]{\frac{ 1-gc}{(1-2\mu)^2-gc}}\frac{ K(r_1)}{K(k_1)} \\
&R_S&  =  2 \sqrt{ \frac{ \sqrt{1-gc}}{ g-c+2\sqrt{ (1-2\mu)^2-gc}} } \frac{ K(r_2)}{K(k_1)} \\ 
&R_L& = \begin{cases} \displaystyle 2 \sqrt{ \frac{ \sqrt{1-gc}}{ -g+c+2\sqrt{ (1-2\mu)^2-gc}} } \frac{ K(r_3)}{K(k_1)}~, &  \;\;   (1-2\mu)^2 \geqslant gc \\ & \\  \displaystyle   2 \sqrt[4]{ \frac{ 1-gc}{ (g+c)^2 - 4(1-2\mu)^2}} \frac{ K(r_4)}{K(k_1)}      ~,& \;\;  (1-2\mu)^2 < gc \end{cases} \\
&R_P& =  \begin{cases} \displaystyle \sqrt{ \frac{ -g+c+2\sqrt{1-gc}}{ -g+c+2\sqrt{ (1-2\mu)^2-gc}}} \frac{ K(r_3)}{K(k_2)} ~,& \;\;\;   (1-2\mu)^2 \geqslant gc \\ & \\  \displaystyle   \sqrt{   \frac{ -g+c+2\sqrt{ 1-gc}}{ \sqrt{ (g+c)^2 - 4(1-2\mu)^2}}}\frac{ K(r_4)}{K(k_2)}      ~,&  \;\;\; (1-2\mu)^2 < gc \end{cases} 
\end{eqnarray*}
where the moduli $k_1, k_2, r_1, r_2, r_3 $, and $r_4$ are given as in Lemma \ref{lemma:periods}.
\end{Lemma}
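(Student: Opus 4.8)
The plan is to obtain $R = \tau_\eta/\tau_\xi$ directly from the closed forms already established in Lemma \ref{lemma:periods}. By the convention fixed just above the statement, $\sqrt{2}\,\tau_\xi$ and $\sqrt{2}\,\tau_\eta$ are precisely the $\xi$- and $\eta$-integrals appearing in (\ref{integralxi}) and (\ref{integraleta}). Hence the common factor $\sqrt{2}$ cancels in the ratio, and $R$ equals the quotient of the right-hand side of (\ref{integraleta}) by the right-hand side of (\ref{integralxi}). The entire lemma is therefore a matter of forming this quotient region by region and consolidating the resulting prefactors.

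First I would fix the correct pairing of cases. The $\xi$-integral (\ref{integralxi}) has only two branches: one valid throughout $S'$, $S$, $L$ (carrying $K(k_1)$ and the prefactor $\sqrt[4]{1-gc}^{\,-1}$) and one valid in $P$ (carrying $K(k_2)$). The $\eta$-integral (\ref{integraleta}), by contrast, splits into four branches according to region and to the sign of $(1-2\mu)^2 - gc$. Reading off from Table \ref{table1} which $\eta$-range governs each region, I would match each $\eta$-branch with the appropriate $\xi$-branch: the $S'$ $\eta$-integral with the $S'$/$S$/$L$ $\xi$-branch, the $S$ $\eta$-integral with the same $\xi$-branch, and the two $L$/$P$ $\eta$-branches with, respectively, the $L$ and the $P$ instances of the $\xi$-integral. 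This produces exactly the six lines displayed in the statement.

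The remaining work is purely algebraic, using $\sqrt[4]{x} = \sqrt{\sqrt{x}}$ throughout. For instance, in the $S$-region the quotient of $2 K(r_2)/\sqrt{g-c+2\sqrt{(1-2\mu)^2-gc}}$ by $K(k_1)/\sqrt[4]{1-gc}$ collapses at once to $2\sqrt{\sqrt{1-gc}\,/(g-c+2\sqrt{(1-2\mu)^2-gc})}\,K(r_2)/K(k_1)$; the $S'$-case combines two fourth roots into the single fourth root $\sqrt[4]{(1-gc)/((1-2\mu)^2-gc)}$; and in $P$ the factors of $2$ cancel while the two square-root prefactors from numerator and denominator merge into one square root. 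The subcases $(1-2\mu)^2 \gtrless gc$ in $L$ and $P$ are treated identically, the only difference being which $\eta$-prefactor and which modulus ($r_3$ or $r_4$) enters.

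There is no genuine obstacle here beyond careful bookkeeping: all the analytic content already resides in Lemma \ref{lemma:periods}, and the present statement is merely its quotient. The one point demanding attention is the case-matching — in particular, confirming via Table \ref{table1} that $L$ and $P$ share the same two $\eta$-branches, separated only by the sign of $(1-2\mu)^2 - gc$, so that each of those two branches must be divided once by the $L$ $\xi$-branch and once by the $P$ $\xi$-branch to yield all four $L$/$P$ lines of the formula.
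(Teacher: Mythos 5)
Your proposal is correct and matches the paper's own treatment: the paper offers no separate proof, introducing the lemma with ``By definition we have,'' since $R=\tau_\eta/\tau_\xi$ and the six displayed formulas are exactly the region-by-region quotients of the integrals in Lemma \ref{lemma:periods} after the common factor $\sqrt{2}$ cancels. Your case-matching and prefactor consolidations all check out against the stated formulas.
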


It follows from Lemma \ref{lemma:periodsmonotone} that $R_c$ is strictly increasing in the region $L$. Dullin-Montgomery \cite[section 7]{syzygy} conjectured that it is strictly decreasing in $S' $-, $S $-, and $P $-regions. The next lemma shows that in the $S'$-region it   is strictly decreasing for $c<c_J$, which is needed to prove Theorem \ref{thm:index} in Section \ref{sec:maintheorem}.

\begin{Lemma} \label{monotoneSs} The rotation function $R_c = R_c(g)$ is strictly decreasing in the $S'$-region, provided that $c<c_J$.
\end{Lemma}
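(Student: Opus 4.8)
The plan is to exploit the structural symmetry between the $\xi$- and $\eta$-periods and reduce the claim to the monotonicity of a single scalar quantity in an auxiliary parameter. Write $\beta = 1-2\mu$ and $a = \beta^2 \in (0,1)$ (the symmetric case $\mu=1/2$, in which $S'$ is empty, is excluded). Inspecting Lemma \ref{lemma:periods} and Lemma \ref{lemma:rotation}, I observe that $\sqrt2\,\tau_\xi$ and $\sqrt2\,\tau_\eta$ are precisely the two values $T(1)$ and $T(a)$ of the single function
\[
T(b) = \frac{K(m(b))}{(b-gc)^{1/4}}, \qquad m(b)^2 = \frac12 - \frac{g-c}{4\sqrt{b-gc}},
\]
so that $R_{S'} = T(a)/T(1)$, with $m(1)=k_1$ and $m(a)=r_1$. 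Consequently it suffices to show that $b \mapsto \partial_g \log T(b)$ is \emph{strictly increasing} on $[a,1]$: since $a<1$, this yields $\partial_g\log R_{S'} = \partial_g\log T(a) - \partial_g\log T(1) < 0$. I would also record the algebraic form of the hypothesis at the outset: because $c_J = -1-\sqrt{1-a}$, the assumption $c<c_J$ is equivalent to $(c+1)^2 > 1-a$, i.e. to $c^2 + 2c + a > 0$; this is the sign condition I expect to drive the final estimate.

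Setting $u = b-gc > 0$ and $L(x) = \log K(\sqrt x)$, a direct differentiation gives
\[
\partial_g \log T(b) = \frac{c}{4u} - L'(m^2)\,\frac{2u + gc - c^2}{8\,u^{3/2}}, \qquad m^2 = \frac12 - \frac{g-c}{4\sqrt u}.
\]
The analytic input is the power series (\ref{ellipticequationpower}): since $K(\sqrt x)=\frac\pi2\sum_{n\ge0} c_n x^n$ with $c_n=\big((2n-1)!!/(2n)!!\big)^2>0$ and ratios $c_{n+1}/c_n=\big((2n+1)/(2n+2)\big)^2$ increasing in $n$, the coefficient sequence is log-convex; from this I would deduce $L'(x)=\sum_{n\ge1} n c_n x^{n-1}\big/\sum_{n\ge0}c_n x^n > 0$ and the convexity $L''>0$. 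Holding $g,c$ fixed, raising $b$ is the same as raising $u$, so the target reduces to the single-variable inequality $\partial_u\big(\partial_g\log T\big)>0$; using $\partial_u(m^2)=(g-c)/(8u^{3/2})$, this becomes an inequality among $L'(m^2)$, $L''(m^2)$ and rational functions of $u,g,c$.

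The main obstacle is exactly this final inequality. The two terms of $\partial_g\log T$ have competing signs: the prefactor term $c/(4u)$ is negative, whereas $2u+gc-c^2 = 2b-gc-c^2$ \emph{changes sign} across $S'$ (one checks it equals $2(b+c)<0$ on the interior-collision boundary $l_3$ but $2(b-\beta c)>0$ on the exterior-collision boundary $l_1$), so no term-by-term or purely monotone argument is available and the positivity of the $u$-derivative must come from a genuine cancellation. At this point I would feed in $c<c_J$ through $c^2+2c+a>0$ to bound $2b-gc-c^2$ from the required side uniformly for $b\in[a,1]$, and combine this with the convexity of $L$ and an elementary bound on $L'(m^2)$ from the positive series. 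I expect the delicate part to be quantitative: making the estimates on $L'$ and $L''$ sharp enough that the negative prefactor contribution and the sign-indefinite middle term together produce a strictly positive $u$-derivative, with the critical-energy inequality supplying precisely the slack needed to close the argument.
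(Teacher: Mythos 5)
Your reduction is sound and is in fact the same structural idea the paper exploits: writing $\sqrt2\,\tau_\xi$ and $\sqrt2\,\tau_\eta$ as the values at $b=1$ and $b=(1-2\mu)^2$ of one function $T(b)$ and reducing the lemma to monotonicity of $\partial_g\log T(b)$ in the auxiliary parameter (the paper phrases this as monotonicity in $\mu$ of certain quantities, which is the same thing since $A_\mu=\sqrt{b-gc}$ decreases as $\mu$ increases from $0$). Your formula for $\partial_g\log T(b)$ is also correct. But the argument stops exactly where the analytic content of the lemma begins: the inequality $\partial_u\bigl(\partial_g\log T\bigr)>0$ is never established. You correctly observe that in your splitting the coefficient $2u+gc-c^2$ changes sign across $S'$, so no term-by-term argument is available, and you then defer to an unspecified quantitative balancing of $L'$ and $L''$ against the rational prefactors, hoping that $c^2+2c+a>0$ supplies ``precisely the slack needed.'' That is a statement of the difficulty, not its resolution.

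The missing idea is a different grouping of the terms. The paper passes to the variables $A=\sqrt{b-gc}$ and $B=g-c$, so that $\tau=K(m)/\sqrt{2A}$ with $m^2=\tfrac12\bigl(1-\tfrac{B}{2A}\bigr)$, and uses $\partial_g=-\tfrac{c}{2A}\,\partial_A+\partial_B$. The difference of logarithmic derivatives then splits as $-\tfrac c2\bigl(\tfrac1A\partial_A\log\tau\big|_{A_\mu}-\tfrac1A\partial_A\log\tau\big|_{A_0}\bigr)+\bigl(\partial_B\log\tau\big|_{A_\mu}-\partial_B\log\tau\big|_{A_0}\bigr)$, and \emph{each} bracket is shown separately to have a definite sign by explicit positive-series identities for $K$ (e.g.\ $K+(2r_1^2-1)\partial_{r_1^2}K>0$ and $(r_1^2-\tfrac12)\partial^2_{r_1^2}K+\partial_{r_1^2}K>0$) together with monotonicity in $\mu$ of quantities such as $\partial_{r_1^2}K/(K A_\mu)$. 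No cancellation between the two pieces is needed, because $-c/2>0$; this regrouping is precisely what removes the sign-indefinite factor that blocks your decomposition. Note also that the hypothesis $c<c_J$ enters the paper's proof through $B=g-c>0$ on the $S'$ strip, not through the inequality $c^2+2c+a>0$ you single out, and that plain log-convexity of $K$ is not the input actually used. Without carrying out either your quantitative estimate or this regrouping, the proof is incomplete.
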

\begin{proof} Note that the $S'$-region appears for $\mu < 1/2$. Fix $ c<c_J$. It suffices to show that 
\begin{equation*}
\label{eq:slope}
\frac{ \partial \tau_{\eta}}{\partial g} \tau_{\xi} - \tau_{\eta} \frac{ \partial \tau_{\xi}}{\partial g} 
\end{equation*}
is negative for $ g \in [-c-2(1-2\mu), -c+2(1-2\mu)]$.  We introduce ${A_{\mu} = \sqrt{ (1-2\mu)^2-gc}}$ and ${B= g-c}$. Note that $B>0$ since we have assumed that $c<c_J$. Then the two periods become
\begin{equation*}
 \tau_{\xi}  = \frac{ K(k_1)}{\sqrt{2A_0}},  \;\;\;\;\;  \tau_{\eta} = \frac{ K(r_1)}{ \sqrt{2 A_{\mu}}} 
\end{equation*}
where
\begin{equation*}
 k_1^2 = \frac{1}{2} \begin{pmatrix} \displaystyle 1- \frac{ B}{2A_0}\end{pmatrix}, \;\;\;\;\; r_1^2 = \frac{1}{2} \begin{pmatrix} \displaystyle 1- \frac{ B}{2A_{\mu}}\end{pmatrix}.
\end{equation*}
Then we obtain 

\begin{eqnarray*}
\frac{ \partial \tau_{\eta}}{\partial g} \tau_{\xi} - \tau_{\eta} \frac{ \partial \tau_{\xi}}{\partial g} &=& \begin{pmatrix} \displaystyle \frac{ \partial \tau_{\eta} }{\partial A_{\mu}}\frac{\partial A_{\mu}}{ \partial g} + \frac{ \partial \tau_{\eta} }{\partial B}\frac{\partial B}{ \partial g} \end{pmatrix}  \tau_{\xi} -\tau_{\eta} \begin{pmatrix} \displaystyle  \frac{ \partial \tau_{\xi}}{\partial A_0 }  \frac{\partial A_{0}}{ \partial g} + \frac{ \partial \tau_{\xi}}{ \partial B} \frac{ \partial B }{ \partial g}\end{pmatrix}\\
&=& \begin{pmatrix} \displaystyle \frac{ \partial \tau_{\eta} }{\partial A_{\mu}}\frac{-\frac{1}{2}c}{ A_{\mu}} + \frac{ \partial \tau_{\eta} }{\partial B}  \end{pmatrix}  \tau_{\xi} -\tau_{\eta} \begin{pmatrix} \displaystyle  \frac{ \partial \tau_{\xi}}{\partial A_0 }  \frac{-\frac{1}{2}c}{ A_0} + \frac{ \partial \tau_{\xi}}{ \partial B} \end{pmatrix}\\
 &=& -\frac{c}{2} \begin{pmatrix} \displaystyle  \frac{ \partial \tau_{\eta} }{\partial A_{\mu}}\frac{ \tau_{\xi} }{ A_{\mu}}   -   \frac{ \partial \tau_{\xi}}{\partial A_0 }  \frac{ \tau_{\eta}}{ A_{0}} \end{pmatrix} + \begin{pmatrix} \displaystyle \frac{ \partial \tau_{\eta}}{ \partial B} \tau_{\xi} - \tau_{\eta} \frac{ \partial \tau_{\xi}}{ \partial B } \end{pmatrix} .
\end{eqnarray*}

We first claim that  $ \displaystyle \frac{ \partial \tau_{\eta}}{ \partial A_{\mu}} \frac{ \tau_{\xi}}{A_{\mu}} - \frac{\partial \tau_{\xi}}{\partial A_0} \frac{ \tau_{\eta}}{ A_0} < 0 $. To see this, we compute that
\begin{eqnarray*}
\frac{ \partial \tau_{\eta} }{\partial A_{\mu}}  &=&  \frac{\partial K}{\partial r_1^2}\frac{\partial r_1^2}{\partial A_{\mu}}\frac{1}{\sqrt{2 A_{\mu}} }   - \frac{K(r_1)}{\sqrt{2A_{\mu}}^{3}}\\
&=&\frac{ \partial K}{ \partial r_1^2} \frac{ B}{ 4 A_{\mu}^{2}}\frac{1}{\sqrt{2A_{\mu}}} - \frac{ K(r_1)}{ \sqrt{2 A_{\mu}}^{3}}\\
&=& \frac{1}{\sqrt{2A_{\mu}}^{3}}\bigg(   \frac{ \partial K}{ \partial r_1^2} \frac{ B}{ 2 A_{\mu}} - K(r_1) \bigg)\\
&=& \frac{1}{\sqrt{2 A_{\mu}}^{3}} \begin{pmatrix} \displaystyle \frac{ \partial K}{\partial r_1^2} (1-2r_1^2) - K(r_1) \end{pmatrix} \\
&=&- \frac{ \pi}{ 2 \sqrt{2A_{\mu}}^{3} } \sum_{n=0}^{\infty} \begin{pmatrix} \displaystyle \frac{ (2n-1) !!}{(2n)!!} \end{pmatrix}^2 \frac{ (2n+1)(2n+3)}{4(n+1)} r_1^{2n}  .
\end{eqnarray*}
Replacing $r_1$ with $k_1$ and setting $\mu=0$ give rise to
\begin{eqnarray*}
\frac{ \partial \tau_{\xi} }{\partial A_{0}}=- \frac{ \pi}{ 2 \sqrt{2A_{0}}^{3} } \sum_{n=0}^{\infty} \begin{pmatrix} \displaystyle \frac{ (2n-1) !!}{(2n)!!} \end{pmatrix}^2 \frac{ (2n+1)(2n+3)}{4(n+1)} k_1^{2n}  .
\end{eqnarray*}
It follows that
\begin{equation*}
 \frac{ \partial \tau_{\eta} }{\partial A_{\mu}}\frac{ \tau_{\xi} }{ A_{\mu}}   -   \frac{ \partial \tau_{\xi}}{\partial A_0 }  \frac{ \tau_{\eta}}{ A_{0}} =  - \frac{   \pi  K(k_1) K(r_1)}{ 8 \sqrt{  A_{\mu} A_0} }     \displaystyle \sum_{n=0}^{\infty} \begin{pmatrix} \displaystyle \frac{ (2n-1) !!}{(2n)!!} \end{pmatrix}^2 \frac{ (2n+1)(2n+3)}{4(n+1)} \bigg(\frac{r_1^{2n}}{K(r_1)A_{\mu}^2} -  \frac{k_1^{2n}}{ K(k_1)A_{0}^2}  \bigg).
\end{equation*}
To prove the claim, it suffices to show that  $ \frac{r_1^{2n}}{ K(r_1) A_{\mu}^2}$ is increasing in $\mu$. Indeed,

\begin{eqnarray*}
\frac{\partial}{\partial \mu}\frac{r_1^{2n}}{ K(r_1) A_{\mu}^2} &=& \frac{  n r_1^{2n-2} \frac{\partial r_1^2}{\partial \mu} K(r_1) A_{\mu}^2 - r_1^{2n} \bigg(   \frac{\partial K}{\partial r_1^2} \frac{\partial r_1^2}{\partial \mu} A_{\mu}^2 + 2K(r_1) A_{\mu} \frac{\partial A_{\mu}}{\partial \mu}     \bigg) }{ K(r_1) ^2 A_{\mu}^4} \\
&=& \frac{  \frac{\partial r_1^2}{\partial \mu} r_1^{2n-2} A_{\mu}^2 \bigg( n  K(r_1)  - r_1^{2 }     \frac{\partial K}{\partial r_1^2}  \bigg) - 2r_1^{2n} K(r_1)A_{\mu} \frac{ -2 (1-2\mu)}{A_{\mu}}      }{ K(r_1) ^2 A_{\mu}^4} \\
&=& \frac{  4 (1-2\mu) r_1^{2n}       }{ K(r_1)  A_{\mu}^4} >0.
\end{eqnarray*}
This proves the claim.

We next claim that $ \displaystyle \frac{ \partial \tau_{\eta}}{\partial B} \tau_{\xi} - \tau_{\eta} \frac{ \partial \tau_{\xi}}{\partial B } < 0   $ from which the lemma follows. In a similar way as above, we observe that 

\begin{equation*}
\frac{ \partial \tau_{\eta}}{\partial B} \tau_{\xi} - \tau_{\eta} \frac{ \partial \tau_{\xi}}{\partial B } =  - \frac{ K(r_1) K(k_1)}{ 8 \sqrt{ A_0 A_{\mu}}  } \begin{pmatrix}\displaystyle \frac{   \partial K / \partial r_1^2}{ K(r_1) A_{\mu}} - \frac{ \partial K / \partial k_1^2}{ K(k_1) A_0} \end{pmatrix}
\end{equation*}
To show that $\frac{   \partial K / \partial r_1^2}{ K(r_1) A_{\mu}}$ is increasing in $\mu$, we compute that
\begin{eqnarray*}
\frac{\partial}{\partial \mu}\frac{   \partial K / \partial r_1^2}{ K(r_1) A_{\mu}} &=& \frac{   K(r_1) \bigg( \frac{\partial^2 K}{\partial (r_1^2)^2 }\frac{\partial r_1^2}{\partial \mu}A_{\mu} - \frac{\partial K}{\partial r_1^2} \frac{\partial A_{\mu}}{\partial \mu} \bigg) - \bigg( \frac{\partial K}{\partial r_1^2} \bigg)^2 \frac{\partial r_1^2}{\partial \mu} A_{\mu} }{ K(r_1)^2 A_{\mu}^2}\\
&=& \frac{   \frac{ 2 (1-2\mu)K(r_1)}{A_{\mu}} \bigg(- \frac{B}{4A_{\mu}} \frac{\partial^2 K}{\partial (r_1^2)^2 } + \frac{\partial K}{\partial r_1^2}   \bigg)+ \bigg( \frac{\partial K}{\partial r_1^2} \bigg)^2 \frac{(1-2\mu)B}{2A_{\mu}^2}  }{ K(r_1)^2 A_{\mu}^2}.
\end{eqnarray*}
The claim then follows from
\begin{eqnarray*}
&& - \frac{B}{4A_{\mu}} \frac{\partial^2 K}{\partial (r_1^2)^2 } + \frac{\partial K}{\partial r_1^2}\\ &=& ( r_1^2 - \frac{1}{2} )\frac{\partial^2 K}{\partial (r_1^2)^2 } + \frac{\partial K}{\partial r_1^2} \\
&=& \frac{\pi}{2} \sum_{n =0}^{\infty} \bigg( \bigg( \frac{ (2n+1)!! }{(2n+2)!!}\bigg)^2  (n+1)n -         \bigg( \frac{ (2n+3)!! }{(2n+4)!!}\bigg)^2 \frac{(n+2)(n+1)}{2} + \bigg( \frac{ (2n+1)!! }{(2n+2)!!}\bigg)^2 (n+1)  \bigg)r_1^{2n}\\
&=& \frac{\pi}{2} \sum_{n =0}^{\infty} \bigg( \frac{ (2n+1)!!  }{(2n+2)!! }\bigg)^2  \bigg(    \frac{ (n+1)(4n^2 + 12n + 7)}{8(n+2)}  \bigg)r_1^{2n} >0.
\end{eqnarray*}
This completes the proof of the lemma.
\end{proof}

\begin{Remark}\label{ramkslope} \rm A similar computation shows that the $c$-derivative of the rotation function $R$ of the $S'$-region is positive. Hence along the curve $R_{S'}(g,c)=R_0$, the slopes $\partial c / \partial g$ are positive.
\end{Remark}

\begin{Corollary} 
\label{cor:exterior}
For each $c<c_J$, the rotation number of the exterior collision orbit in the Moon component is greater than that  in the Earth component:   $R_{\text{ext}}^E < R_{\text{ext}}^M$.
\end{Corollary}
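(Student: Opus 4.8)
The plan is to realize both collision rotation numbers as the two boundary values of the single rotation function $R_c$ on the $S'$-strip, and then to invoke the strict monotonicity established in Lemma \ref{monotoneSs}. I assume $\mu < 1/2$, since for $\mu = 1/2$ the lines $l_1$ and $l_2$ coincide, the $S'$-region is absent, and the two exterior collision orbits are interchanged by the reflection $\eta \mapsto -\eta$; the strict inequality is a genuinely asymmetric phenomenon. Fix $c < c_J$. At this energy the $S'$-region is the open segment $g \in (g_{\min}, g_{\max})$ with $g_{\min} = -c - 2(1-2\mu)$ (the point of $l_2$) and $g_{\max} = -c + 2(1-2\mu)$ (the point of $l_1$), which is precisely the interval appearing in Lemma \ref{monotoneSs}.

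First I would identify the right endpoint. As $g \to g_{\max}$ inside $S'$ one checks that the root $\eta_1 \to -1$, so that the $\eta$-range $(-1, \eta_1)$ collapses and the tori degenerate onto the Earth exterior collision orbit $\eta = -1$. Since $R_{S'}$ extends continuously to $l_1$ — in the formula of Lemma \ref{lemma:rotation} the modulus $r_1 \to 0$, whence $K(r_1) \to \pi/2$ — this yields $R_{\text{ext}}^E = \lim_{g \to g_{\max}} R_c(g)$.

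The main step, and the point where I expect the real work to lie, is to identify the left endpoint with $R_{\text{ext}}^M$. This is not automatic: the $S'$-tori are confined to $\mathcal{K}_c^E$, whereas the Moon exterior collision orbit lives in $\mathcal{K}_c^M$ and is computed from the $S$-region formulas, so it is a priori a different orbit. What I would prove is that its rotation number nevertheless equals the $l_2$-limit of $R_c$ from the $S'$ side. On $l_2$ the two $\xi$-periods agree, since $S'$ and $S$ share the formula $\tau_\xi \propto (1-gc)^{-1/4}K(k_1)$ evaluated at the same point, and the two $\eta$-periods agree by Remark \ref{ramket}, both moduli $r_1$ and $r_2$ tending to $0$ there. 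Concretely, evaluating the expressions for $R_{S'}$ and $R_S$ in Lemma \ref{lemma:rotation} on $l_2$, both collapse to $\sqrt{A_0/A_\mu}\,(\pi/2)/K(k_1)$ with $A_\mu = -c - 1 + 2\mu$ and $A_0 = \sqrt{1-gc}$, so indeed $R_{\text{ext}}^M = \lim_{g \to g_{\min}} R_c(g)$.

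With both endpoints in hand the conclusion is immediate: by Lemma \ref{monotoneSs} the function $R_c$ is strictly decreasing on $[g_{\min}, g_{\max}]$, and strict monotonicity on the open strip together with continuity up to the boundary gives $R_c(g_{\min}) > R_c(g_{\max})$, that is $R_{\text{ext}}^M > R_{\text{ext}}^E$. Should the boundary identification above prove awkward, an alternative that bypasses Lemma \ref{monotoneSs} is to write both numbers as $\Phi(t) := (\pi/2)\,t^{-1/2}/K\big(\sqrt{(1-t)/2}\big)$ with $t = A_\mu/A_0$ read off on $l_1$ and on $l_2$; one then notes from the monotonicity of $s \mapsto s/\sqrt{s^2 + (1-(1-2\mu)^2)}$ that the $l_1$-value of $t$ exceeds the $l_2$-value, and checks via the series (\ref{ellipticequationpower}) that $\Phi$ is strictly decreasing, so that the larger $t$ yields the smaller rotation number.
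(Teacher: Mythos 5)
Your main argument is exactly the paper's intended one: Corollary \ref{cor:exterior} is stated as an immediate consequence of Lemma \ref{monotoneSs}, with the two exterior collision rotation numbers identified as the boundary values of the strictly decreasing function $R_c$ on the $S'$-strip between $l_2$ and $l_1$ (the $l_2$-identification being supplied by Remark \ref{ramket}), and your endpoint computations check out against the formulas in Lemma \ref{lemma:rotation} and Cases 2--3 of the proof of Proposition \ref{prop1}. Your closing alternative via $\Phi(t)=(\pi/2)t^{-1/2}/K\bigl(\sqrt{(1-t)/2}\bigr)$ is also correct and bypasses Lemma \ref{monotoneSs} entirely, but it is a bonus rather than a different reading of the statement.
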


The following lemma concerns the $S$-region.

\begin{Lemma}\label{lemmanocircle}
Given any point $(g_0, c_0 ) \in S$, there exists a smooth function $f=f_{(c_0, g_0)}: \R \rightarrow \R$ with $f(g_0)=c_0$ such that 
\begin{equation*}
\lim_{g\rightarrow g_0} \frac{\partial R_S(g,f(g))}{\partial g}  \neq 0.
\end{equation*}
\end{Lemma}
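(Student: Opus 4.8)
The plan is to recognize that the assertion is equivalent to $\nabla R_S(g_0,c_0)\neq 0$ and then to prove this. For any affine $f$ with $f(g_0)=c_0$ and $f'(g_0)=s$, the chain rule gives $\lim_{g\to g_0}\frac{\partial}{\partial g}R_S(g,f(g))=\partial_g R_S(g_0,c_0)+s\,\partial_c R_S(g_0,c_0)$, which is nonzero for a suitable $s$ exactly when the gradient $\nabla R_S$ does not vanish at $(g_0,c_0)$. So it suffices to show that $R_S$ has no critical point in $S$. Equivalently --- and this is the reason for the name --- no level curve $R_S=k/l$ (i.e. no $T_{k,l}$-torus family) can close up into a loop inside $S$, since a closed regular level curve would bound a disk containing an interior extremum of $R_S$.

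First I would change coordinates to $A_{\mu}=\sqrt{(1-2\mu)^2-gc}$ and $B=g-c$, as in the proof of Lemma \ref{monotoneSs}, and set $A_0=\sqrt{1-gc}$, so that $A_0^2-A_{\mu}^2=4\mu(1-\mu)$ is a positive constant and $dA_0/dA_{\mu}=A_{\mu}/A_0$. In $S$ the modulus of Lemma \ref{lemma:periods} obeys $r_2^2=(B-2A_{\mu})/(B+2A_{\mu})\in(0,1)$, hence $B>2A_{\mu}>0$; squaring $B>2A_{\mu}$ gives $(g+c)^2>4(1-2\mu)^2$, so $g+c\neq 0$ on $S$. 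As the Jacobian of $(g,c)\mapsto(A_{\mu},B)$ equals $(g+c)/(2A_{\mu})$, this map is a local diffeomorphism on $S$, and it is enough to prove $\nabla_{(A_{\mu},B)}R_S\neq 0$.

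The crux is a homogeneity. Since $k_1^2=\tfrac12(1-B/2A_0)$ and $r_2^2$ depend only on the ratios $B/A_0$ and $B/A_{\mu}$, the period $\tau_{\xi}=K(k_1)/\sqrt{2A_0}$ is homogeneous of degree $-\tfrac12$ in $(A_0,B)$ and $\tau_{\eta}$ is homogeneous of degree $-\tfrac12$ in $(A_{\mu},B)$. Euler's identity then gives
\[
A_0\,\partial_{A_0}\log\tau_{\xi}+B\,\partial_B\log\tau_{\xi}=-\tfrac12,\qquad A_{\mu}\,\partial_{A_{\mu}}\log\tau_{\eta}+B\,\partial_B\log\tau_{\eta}=-\tfrac12 .
\]
Write $u=\partial_{A_{\mu}}\log\tau_{\eta}$ and $v=\partial_{A_0}\log\tau_{\xi}$. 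Using $\partial_{A_{\mu}}\log\tau_{\xi}=v\,A_{\mu}/A_0$, the vanishing of $\nabla_{(A_{\mu},B)}R_S$ reads $\partial_B\log\tau_{\eta}=\partial_B\log\tau_{\xi}$ together with $A_0u=A_{\mu}v$. Subtracting the two Euler relations and using the first condition gives $A_0v=A_{\mu}u$; combined with $A_0u=A_{\mu}v$ this forces $(A_0^2-A_{\mu}^2)u=0$, hence $u=0$. But a short computation of the kind carried out in Lemma \ref{monotoneSs} shows that $\partial_{A_{\mu}}\tau_{\eta}$ is a negative multiple of $2(1+r_2^2)\,\partial K/\partial r_2^2+K(r_2)>0$, so $u<0$, a contradiction. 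Therefore $\nabla R_S\neq 0$ on $S$.

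The hard part is that in the $S$-region the two periods have genuinely different shapes --- $\tau_{\eta}$ carries the modulus $r_2$ and the weight $(B+2A_{\mu})^{-1/2}$, whereas in the $S'$-region both periods shared the form $K/\sqrt{2A}$ --- so the term-by-term power-series comparison of Lemma \ref{monotoneSs} is unavailable, and a naive attempt to sign $\partial_g R_S$ or $\partial_c R_S$ directly runs into an indeterminate difference of two positive quantities. The homogeneity/Euler step is exactly what bypasses this difficulty, collapsing the problem to the single, readily signed quantity $\partial_{A_{\mu}}\tau_{\eta}$; the only routine verifications left are the strict negativity of $\partial_{A_{\mu}}\tau_{\eta}$ and the inequality $g+c\neq 0$ on $S$.
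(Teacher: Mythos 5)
Your proof is correct, and it takes a genuinely different route from the paper's. The paper does not show that $\nabla R_S$ is nonvanishing; it instead exhibits one explicit direction, choosing $f'(g_0)=(c_0^2+g_0c_0-2)/(g_0^2+g_0c_0-2)$ precisely so that the quantity $B_0(g_0)$ (hence $\partial k_1^2/\partial g$ along the curve) vanishes, and then checks by a direct computation that the surviving contributions, governed by $B_{\mu}(g_0)\geqslant 0$ and $f(g_0)+g_0f'(g_0)<0$, make the derivative strictly negative. You instead prove the stronger statement $\nabla R_S\neq 0$ throughout $S$ --- which is exactly the corollary the paper deduces from the lemma --- via the homogeneity of $\tau_{\xi}$ and $\tau_{\eta}$ of common degree $-\tfrac12$ in $(A_0,B)$ and $(A_{\mu},B)$ together with the fact that $A_0^2-A_{\mu}^2=4\mu(1-\mu)$ is a nonzero constant; the two Euler identities collapse the critical-point equations to $(A_0^2-A_{\mu}^2)\,\partial_{A_{\mu}}\log\tau_{\eta}=0$, contradicted by the one-line computation $\partial_{A_{\mu}}\tau_{\eta}=-\sqrt{2}\,(B+2A_{\mu})^{-3/2}\bigl(2(1+r_2^2)\,\partial K/\partial r_2^2+K(r_2)\bigr)<0$. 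I verified the supporting facts: $B>2A_{\mu}>0$ does follow from $r_2^2\in(0,1)$ and $A_{\mu}>0$; the Jacobian of $(g,c)\mapsto(A_{\mu},B)$ equals $(g+c)/(2A_{\mu})$, which is nonzero on $S$ since $-2<g+c<-2(1-2\mu)\leqslant 0$ there; and the algebra $A_0u=A_{\mu}v$, $A_{\mu}u=A_0v$ forcing $u=0$ is sound. What your approach buys is a conceptual reason for nondegeneracy and the corollary for free; what the paper's buys is an explicit distinguished direction along which $R_S$ strictly decreases, at the cost of a less transparent calculation. Both are valid proofs of the lemma.
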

\begin{proof} Choose any $(g_0, c_0 ) \in S$. Let $f: \R \rightarrow \R$ be any smooth function such that $f(g_0)=c_0$ and
\begin{equation}\label{functionf}
f'(g_0) = \frac{ c_0^2 + g_0c_0 -2}{g_0^2 +  g_0c_0 -2}.
\end{equation}
Observe that the denominator is negative. Define
\begin{equation*}
A(g) :=\sqrt{       \frac{\sqrt{1-gf(g)}}{ g-f(g)+2\sqrt{ (1-2\mu)^2 -gf(g)}}} 
\end{equation*}
so that

\begin{equation*}
R_S(g, f(g)) = 2 A(g)  \frac{K(r_2)}{K(k_1)}.
\end{equation*}
We compute that
\begin{equation*}
\frac{\partial A}{\partial g} = \frac{1}{ 4A \sqrt{1-gf} ( g-f+2\sqrt{ (1-2\mu)^2-gf})^2} \bigg(         B_0(g) + \frac{8\mu(1-\mu)(f+gf')}{\sqrt{(1-2\mu)^2 -gf}}       \bigg),
\end{equation*}
where $B_{\mu}(g) = f(g)^2 + gf(g) - g^2 f'(g) - gf(g)f'(g) + 2(1-2\mu)^2f'(g) - 2(1-2\mu)^2 $.
On the other hand, we have
\begin{eqnarray*}
\frac{\partial k_1^2(g, f(g))}{\partial g}  &=& \frac{ B_0(g)}{8\sqrt{1-gf(g)}^3}\\
\frac{\partial r_2^2(g, f(g))}{\partial g}   &=& \frac{ -  2B_{\mu}(g)}{       ( g-f + 2\sqrt{ (1-2\mu)^2 -gf})^2\sqrt{ (1-2\mu)^2 -gf}                  }.
\end{eqnarray*}
It follows that
\begin{eqnarray*}
\label{eq1}
\frac{\partial R_S(g, f(g))}{\partial g} &=& \frac{\partial A(g)}{\partial g}  \frac{K(r_2)}{K(k_1)} + A(g) \frac{ \frac{\partial K(r_2)}{\partial r_2^2} \frac{\partial r_2^2}{\partial g} K(k_1) - K(r_2) \frac{\partial K(k_1)}{\partial k_1^2} \frac{\partial k_1^2}{\partial g} }{K(k_1)^2} \\
&=&   \square_1 \bigg(     2  K(k_1) \sqrt{1-gf}   \bigg( B_0 + \frac{ 8\mu(1-\mu)(f+gf')}{\sqrt{(1-2\mu)^2 -gf}}\bigg) \\
&&\hspace{0.7cm} -    B_0 \frac{\partial K}{\partial k_1^2}  \bigg( g-f +2\sqrt{ (1-2\mu)^2 -gf} \bigg)         \bigg) - \square_2 B_{\mu},
\end{eqnarray*}
where
\begin{eqnarray*}
\square_1 &=& \frac{K(r_2)}{ 8K(k_1)^2 A (1-gf)(g-f+2\sqrt{ (1-2\mu)^2 -gf})^2}  \\
\square_2 &=& \frac{ 2A}{K(k_1)  ( g-f + 2\sqrt{ (1-2\mu)^2 -gf})^2\sqrt{ (1-2\mu)^2 -gf} }\frac{\partial K(r_2)}{\partial r_2^2}
\end{eqnarray*}
are positive terms. In view of (\ref{functionf}) we obtain
\begin{equation*}
B_{\mu}(g_0) = \frac{ 8\mu(1-\mu)(g_0^2 -c_0^2 )}{g_0^2 + g_0c_0-2} \geqslant 0.
\end{equation*}
Moreover, we have

\begin{eqnarray*}
f(g_0) + g_0f'(g_0) &=& c_0 + g_0 \frac{ c_0^2 + g_0c_0 -2}{g_0^2 + g_0c_0 -2} \\
&=& \frac{ 2(g_0c_0-1)( g_0 +c_0)}{g_0^2 + g_0c_0-2} <0.
\end{eqnarray*}
It follows that
\begin{equation*}
\label{eq1}
\frac{\partial R_S(g_0, f(g_0))}{\partial g}= \bigg( \square_1  \frac{ 16\mu(1-\mu)K(k_1) \sqrt{1-gf} }{\sqrt{(1-2\mu)^2 -gf}}\bigg) (f+gf') -  \square_2 B_{\mu}<0.
\end{equation*}
This completes the proof of the lemma.
\end{proof}

\begin{Corollary} In the $S$-region, there is no critical point of the rotation function. 
\end{Corollary}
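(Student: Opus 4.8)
The corollary follows at once from Lemma \ref{lemmanocircle}, and the plan is to argue by contradiction. First I would suppose that some point $(g_0, c_0) \in S$ is a critical point of the rotation function $R_S$, viewed as a smooth function of the two variables $(g,c)$ on the open $S$-region. By definition this means that both first-order partial derivatives vanish there:
\[
\frac{\partial R_S}{\partial g}(g_0, c_0) = 0, \qquad \frac{\partial R_S}{\partial c}(g_0, c_0) = 0 .
\]

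Next I would feed this point into Lemma \ref{lemmanocircle}, which furnishes a smooth function $f = f_{(c_0, g_0)} : \R \rightarrow \R$ with $f(g_0) = c_0$ such that the restriction $g \mapsto R_S(g, f(g))$ has nonvanishing derivative at $g_0$. The key observation is that, by the chain rule, this total derivative decomposes against the gradient of $R_S$:
\[
\frac{d}{dg}\bigg|_{g = g_0} R_S\big(g, f(g)\big) = \frac{\partial R_S}{\partial g}(g_0, c_0) + f'(g_0)\, \frac{\partial R_S}{\partial c}(g_0, c_0) .
\]
If $(g_0, c_0)$ were a critical point, then the right-hand side would vanish identically, directly contradicting the conclusion of Lemma \ref{lemmanocircle} that the left-hand side is nonzero. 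Therefore no critical point of $R_S$ can lie in the $S$-region, which is exactly the assertion of the corollary.

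I do not expect any genuine obstacle at this stage, since the entire analytic burden has already been discharged inside Lemma \ref{lemmanocircle}: the delicate choice of the slope $f'(g_0)$ in (\ref{functionf}), the resulting sign of $B_{\mu}(g_0)$, and the positivity of the associated power series are precisely what guarantee the nonvanishing total derivative along the auxiliary curve $c = f(g)$. The one conceptual point worth pinning down is the meaning of \emph{critical point}, namely the simultaneous vanishing of both partials of $R_S$ regarded as a function on the two-dimensional region; with that reading fixed, the chain-rule identity above closes the argument in a single line, so the proof of the corollary is essentially a formality once the lemma is in hand.
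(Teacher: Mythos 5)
Your argument is correct and is precisely the (implicit) reasoning the paper intends: the corollary is stated without proof as an immediate consequence of Lemma \ref{lemmanocircle}, and the chain-rule identity
\[
\frac{d}{dg}\Big|_{g=g_0} R_S\big(g,f(g)\big) = \frac{\partial R_S}{\partial g}(g_0,c_0) + f'(g_0)\,\frac{\partial R_S}{\partial c}(g_0,c_0)
\]
shows that a critical point would force this total derivative to vanish, contradicting the lemma. Nothing further is needed.
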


We are now in a position to prove the main result of this section.

\textit{Proof of Proposition \ref{prop1}.} 

 \;

\textbf{Case1}: The interior collision orbits.\\
 The interior collision orbits lie on  $l_3 : g=-c-2$ with $ c < c_J$. By Lemmas \ref{lemma:periods} and \ref{lemma:rotation},  the periods and the rotation function are given by
\begin{eqnarray*}
&& \tau_{\xi}^{\text{int}} = \frac{ \pi}{ 2\sqrt{2(-1-c)}}, \;\;\;\;\; \tau_{\eta}^{\text{int}} = \frac{ K(r_2)}{ \sqrt{  -c-1 + \sqrt{ c^2 + 2c+ (1-2\mu)^2        }}} \\
&& R_{\text{int}} = \frac{ 2\sqrt{2(-1-c)}K(r_2)}{ \pi \sqrt{ -c-1 + \sqrt{ c^2+2c+(1-2\mu)^2}}} = \frac{2}{\pi} \sqrt{ 1+ r_2^2}K(r_2),
\end{eqnarray*}
where 
$$ r_2^2 = \frac{ -c-1 -\sqrt{c^2+2c+(1-2\mu)^2} }{-c-1+\sqrt{c^2+2c+(1-2\mu)^2}}. $$
We compute that
\begin{eqnarray*}
\frac{\partial r_2^2}{\partial c} = \frac{ 8\mu(1-\mu)}{ ( -c-1+ A)^2 A} > 0,
\end{eqnarray*}
where $ A = \sqrt{ c^2 + 2c+ (1-2\mu)^2}.$ Then it follows from
\begin{eqnarray*}
 \frac{\partial R_{\text{int}}}{\partial c}  = \frac{1}{\pi \sqrt{1+r_2^2}}\frac{\partial r_2^2}{\partial c} \bigg( K(r_2) + 2(1+r_2^2) \frac{ \partial K}{\partial r_2^2} \bigg)
\end{eqnarray*}
that the rotation function $R_{\text{int}}$ is strictly increasing in $c$. Moreover, 
\begin{eqnarray*}
\lim_{c \rightarrow -\infty} R_{\text{int}} = \lim_{ r_2^2 \rightarrow 0} \frac{2}{\pi} \sqrt{ 1+r_2^2}K(r_2) = 1 , \;\;\;\;\; \lim_{c \rightarrow c_J} R_{\text{int}} = \lim_{ r_2^2 \rightarrow 1} \frac{2}{\pi} \sqrt{ 1+r_2^2}K(r_2) = \infty.
\end{eqnarray*}

\;\;

\textbf{Case2}: The exterior collision orbits in the Earth component.\\
  The exterior collision orbits in the Earth component lie on  $l_1 : g = -c + 2(1-2\mu)$ and we have
\begin{eqnarray*}
&&\tau_{\xi}^{\text{ext,E}} = \frac{  K(k_1)}{\sqrt{2} \sqrt[4]{ c^2 -2(1-2\mu)c +1}}, \;\;\;\;\; \tau_{\eta}^{\text{ext,E}} = \frac{ \pi}{2\sqrt{ 2(1-2\mu-c)}} \\
&&R_{\text{ext}}^E = \frac{ \pi \sqrt[4]{c^2 -2(1-2\mu)c +1}}{2\sqrt{1-2\mu-c}K(k_1)} = \frac{\pi}{2} \frac{ 1}{ \sqrt{ 1-2k_1^2}K(k_1)},
\end{eqnarray*}
where
\begin{equation*}
k_1^2 = \frac{1}{2} \bigg( 1 - \frac{ 1-2\mu-c}{\sqrt{ c^2 -2(1-2\mu)c + 1}}\bigg).
\end{equation*}
As in the previous case, we compute that
\begin{eqnarray*}
 \frac{ \partial k_1^2}{\partial c} &=& \frac{ 2 \mu(1-\mu)}{ \big( c^2 - 2(1-2\mu)c+1\big)^{3/2}} >0 \\
\frac{\partial R_{\text{ext}}^E}{\partial c} &=& \frac{\pi}{2 (1-2k_1^2)^{3/2} K(k_1)^2 } \frac{\partial k_1^2}{\partial c} \bigg( K(k_1) - (1-2k_1^2) \frac{\partial K}{\partial k_1^2} \bigg) \\
&=& \frac{\pi^2}{4 (1-2k_1^2)^{3/2} K(k_1)^2 } \frac{\partial k_1^2}{\partial c}  \bigg( \sum_{n=0}^{\infty}  \frac{ (2n-1)!! (2n+1)!!}{(2n)!!(2n+2)!!} \frac{ 2n+3}{2} k_1^{2n}\bigg) > 0.
\end{eqnarray*}
Moreover,

$$ \lim_{c \rightarrow -\infty} R_{\text{ext}}^E = \lim_{   k_1^2 \rightarrow 0 } \frac{\pi}{2} \frac{ 1}{ \sqrt{ 1-2k_1^2}K(k_1)} = 1$$
and
$$ \lim_{c \rightarrow 0 } R_{\text{ext}}^E = \lim_{k_1^2 \rightarrow \mu}  \frac{\pi}{2} \frac{ 1}{ \sqrt{ 1-2k_1^2}K(k_1)} = \frac{\pi}{2} \frac{1}{\sqrt{1-2\mu}}K(\sqrt{\mu}).$$

\;\; 

\textbf{Case3}: The exterior collision orbits in the Moon component.\\
 The exterior collision orbits in the Moon component lie on $l_2 : g=-c-2(1-2\mu)$ and we have
\begin{eqnarray*}
&&\tau_{\xi}^{\text{ext,M}} = \frac{ K(k_1)}{ \sqrt{2} \sqrt[4]{ c^2 + 2(1-2\mu)c + 1 }} , \;\;\;\;\;\tau_{\eta}^{\text{ext,M}} = \frac{ K(r_1)}{  \sqrt{2 |c+1-2\mu| }} \\
&&R_{\text{ext}}^M = \frac{  \sqrt[4]{c^2 + 2(1-2\mu)c +1}K(r_1)}{ \sqrt{|c+1-2\mu|} K(k_1)} = \frac{ K(r_1)}{  \sqrt{ |2k_1^2-1|}K(k_1)},
\end{eqnarray*}
where
$$ k_1 ^2 = \frac{1}{2} \bigg( 1 + \frac{ c+1-2\mu}{ \sqrt{ c^2 + 2(1-2\mu)c + 1}}\bigg), \;\;\;\;\; r_1^2 = \frac{1}{2} \bigg( 1 + \frac{ c+1-2\mu}{ |c+1-2\mu|}\bigg).$$
Assume first that $c< c_h = -1+2\mu$. Then we have
\begin{eqnarray*}
R_{\text{ext}}^M = \frac{ \pi}{2} \frac{ 1}{\sqrt{ 1-2k_1^2} K(k_1)}.
\end{eqnarray*}
Together with 
$$ \frac{\partial k_1^2}{\partial c } = \frac{ 2\mu(1-\mu)}{ \big( c^2 + 2(1-2\mu)c +1 \big)^{3/2}} >0,$$
the computation in the case2 shows that the derivative $\partial R_{\text{ext}}^M / \partial c $ is positive. Moreover, 
\begin{equation*}
\lim_{c \rightarrow -\infty} R_{\text{ext} }^M = \lim_{ k_1^2 \rightarrow 0}  \frac{\pi}{2} \frac{ 1}{ \sqrt{ 1-2k_1^2}K(k_1) } = 1.
\end{equation*}
If $c>c_h $, then $r_1^2 = 1$ and hence $R_{\text{ext}}^M = \infty$.

\;\;

\textbf{Case4}: The double-collision orbits.\\
The double-collision orbits lie on  $l_3 : g=-c-2$ with $ c > c_J$ and we have
\begin{eqnarray*}
&& \tau_{\xi}^{\text{dou}} = \begin{cases} \displaystyle \frac{\pi}{2\sqrt{2(-1-c)}} & c<c_e \\ & \\ \;\;\;\;\;\;\;\; \infty & c>c_e \end{cases}, \;\;\; \tau_{\eta}^{\text{dou}} =  \begin{cases} \displaystyle  \frac{K(r_4)}{\sqrt{2} \sqrt[4]{ \mu(1-\mu)}} & (1-2\mu)^2 < gc \\ & \\ \displaystyle \frac{  K(r_3)}{ \sqrt{ c+1 + \sqrt{ c^2 + 2c+(1-2\mu)^2}}} & (1-2\mu)^2 \geqslant gc \end{cases} \\
&& R_{\text{dou}} =    \begin{cases} \displaystyle  \frac{ 2\sqrt{-1-c} K(r_4)}{ \pi \sqrt[4]{\mu(1-\mu)} } = \frac{2}{\pi} \sqrt{ 4r_4^2-2} K(r_4)& (1-2\mu)^2 < gc,~ c<c_e \\ & \\ \displaystyle  \;\;\;\;\;\;\;\; 0 & \text{ otherwise } \end{cases}
\end{eqnarray*}
where

$$  r_3^2 = \frac{2\sqrt{ c^2 + 2c + (1-2\mu)^2}}{ c+1+\sqrt{c^2+2c+(1-2\mu)^2} }, \;\;\;\;\;  r_4^2 = \frac{1}{2} \bigg( 1 - \frac{ c+1}{ 2\sqrt{ \mu(1-\mu)} } \bigg). $$
We compute that for $(1-2\mu)^2 < gc$ with $c<c_e$
\begin{eqnarray*}
&&\frac{\partial r_4^2}{\partial c} = - \frac{1}{4\sqrt{\mu(1-\mu)}} < 0 ,\\
&&\frac{ \partial R_{\text{dou}}}{\partial c} = \frac{ 2\sqrt{2}}{\pi \sqrt{ 2r_4^2-1}}\frac{\partial r_4^2}{\partial c} \bigg( K(r_4) + (2r_4^2-1)\frac{\partial K}{\partial r_4^2} \bigg) < 0.
\end{eqnarray*}
Moreover,
\begin{eqnarray*}
\lim_{c \rightarrow c_J} R_{\text{dou}} = \lim_{ r_4^2 \rightarrow 1 } \frac{2}{\pi} \sqrt{4r_4^2 -2} K(r_4) = \infty, \;\;\;  \lim_{c \rightarrow c_e} R_{\text{dou}} = \lim_{ r_4^2 \rightarrow  1/2 } \frac{2}{\pi} \sqrt{4r_4^2 -2} K(r_4) = 0.
\end{eqnarray*}

\;

\textbf{Case5}: The hyperbolic orbits.\\
The hyperbolic  orbits lie on  $l_4 : gc= (1-2\mu)^2$ with $ c_J < c< c_h$ and we have
\begin{eqnarray*}
 \tau_{\xi}^{\text{hyp}} = \frac{ K(k_1)}{2 \sqrt[4]{\mu(1-\mu)} } , \;\;\;\;\; \tau_{\eta}^{\text{hyp}} = \infty, \;\;\;\;\; R_{\text{hyp}} = \infty,
\end{eqnarray*}
where
\begin{eqnarray*}
k_1^2 = \frac{1}{2} \bigg( 1 - \frac{(1-2\mu)^2 - c^2}{ 4c\sqrt{\mu(1-\mu)}} \bigg).
\end{eqnarray*}

\;

\textbf{Case6}: The elliptic orbits.\\
The elliptic  orbit lie on  $l_5 : gc=1$ with $c>c_e$ and we have
\begin{eqnarray*}
&&\tau_{\xi}^{\text{ell}} =  \frac{ \pi\sqrt{- c}}{\sqrt{2(1-c^2)}}, \;\;\;\;\; \tau_{\eta}^{\text{ell}} = \frac{ \sqrt{-2 c}K(r_4)}{\sqrt[4]{c^4 + 2c^2 - 4(1-2\mu)^2 c^2 +1}} \\
&&R_{\text{ell}} = \frac{2}{\pi} \sqrt{ 1-2r_4^2} K(r_4),
\end{eqnarray*} 
where
$$ r_4^2   = \frac{1}{2} \bigg( 1 + \frac{ c^2-1}{ \sqrt{ c^4 + 2c^2 + 1 -4(1-2\mu)^2 c^2}}\bigg).$$
We compute that
\begin{eqnarray*}
\frac{\partial r_4^2}{\partial c} = \frac{  8\mu(1-\mu)c(c^2+1) }{\big( c^4 + 2c^2 +1 - 4(1-2\mu)^2 c^2\big)^{ 3/2}} <0  .
\end{eqnarray*}
It then follows from
\begin{eqnarray*}
 \frac{\partial R_{\text{ell}}}{\partial c} &=& \frac{2}{\pi \sqrt{ 1-2r_4^2}} \frac{\partial r_4^2}{\partial c} \bigg( -K(r_4) + (1-2r_4^2) \frac{\partial K}{\partial r_4^2} \bigg) \\
&=& - \frac{1}{\sqrt{ 1-2r_4^2}} \frac{\partial r_4^2}{\partial c} \bigg( \sum_{n=0}^{\infty}  \frac{ (2n-1)!! (2n+1)!!}{(2n)!!(2n+2)!!} \frac{ 2n+3}{2} r_4^{2n}\bigg) >0
\end{eqnarray*}
that the rotation function $R_{\text{ell}}$ is strictly increasing in $c$. Moreover,
\begin{equation*}
\lim_{c \rightarrow 0 } R_{\text{ell}} = \lim_{r_4^2 \rightarrow 0} \frac{2\sqrt{ 1-2r_4^2 }K(r_4)}{\pi} =1, \;\;\;\;\; \lim_{c \rightarrow c_e} R_{\text{ell}}=\lim_{r_4^2 \rightarrow 1/2} \frac{2\sqrt{ 1-2r_4^2} K(r_4)}{\pi} =0.
\end{equation*}
This completes the proof of the proposition.

\begin{Corollary} 
\label{cor:bifurcation}
$(\text{a})$ in the region  $S'$, a $T_{k,l}$-torus family  converges to either $l_1$ or the $g$-axis  in one direction and   to  $l_2$  in the other direction.

$(\text{b})$ in the region $S$, it  converges to $l_2$ and $l_3$.

$(\text{c})$ in the region $L$, it converges to $l_3$ and the $g$-axis.

$(\text{d})$ in the region $P$, it  converges to  $l_5$ and the $g$-axis.
\end{Corollary}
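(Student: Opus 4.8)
The plan is to treat a $T_{k,l}$-torus family as the level set $\Gamma := \{(g,c): R(g,c)=R_0\}$, $R_0=k/l$, and to locate its ends by comparing $R_0$ with the boundary values of the rotation function supplied by Proposition~\ref{prop1}. First I would record that $\nabla R$ is nowhere zero in the interior of each region: in $S'$ this follows from Lemma~\ref{monotoneSs} ($\partial R/\partial g<0$) together with Remark~\ref{ramkslope} ($\partial R/\partial c>0$); in $L$ from the monotonicity $\partial R/\partial g>0$ derived from Lemma~\ref{lemma:periodsmonotone}; and in $S$ from the Corollary to Lemma~\ref{lemmanocircle}. By the implicit function theorem each component of $\Gamma$ is then a smoothly embedded curve, and it cannot be a closed loop: a loop bounding a disk inside the open region would force an interior extremum, hence a critical point, of $R$. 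Thus every component of $\Gamma$ is a properly embedded arc with both ends converging to the boundary of the region.

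Next I would identify the limit of $R$ along $\Gamma$ at each boundary. By continuity this limit equals the rotation function of the critical orbit sitting on that critical curve, as computed in Proposition~\ref{prop1}: on $l_1$ it is $R_{\text{ext}}^E$, on $l_3$ with $c<c_J$ it is $R_{\text{int}}$, on $l_3$ with $c>c_J$ it is $R_{\text{dou}}$, and on $l_5$ it is $R_{\text{ell}}$; approaching the $g$-axis means $c\to 0$. Across $l_2$, Remark~\ref{ramket} shows that $R$ extends continuously from $S'$ to $S$, so a level curve simply passes from the $S'$- into the $S$-region there rather than terminating. With these values in hand, each statement becomes an application of the intermediate value theorem along the monotone direction, the endpoint on a given critical curve existing precisely when $R_0$ lies in the range swept out by the corresponding rotation function of Proposition~\ref{prop1}.

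Carrying this out: in $S'$, where $R_c$ is monotone in $g$ and the level curves have positive slope $\partial c/\partial g$ (Remark~\ref{ramkslope}), one end meets $l_2$ while the other meets $l_1$ exactly when $R_0$ lies in the range $(1,\tfrac{\pi}{2}(1-2\mu)^{-1/2}K(\sqrt{\mu}))$ of $R_{\text{ext}}^E$, and otherwise escapes to the $g$-axis $\{c=0\}$; since $\mu\neq 1/2$ this range is bounded, which is what produces the dichotomy in~(a). In $S$ the arc runs from $l_2$ to $l_3$, because $R_{\text{int}}$ exhausts $(1,\infty)$ along $l_3$ while the $S'$-continuation provides the $l_2$-end, giving~(b). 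In $L$, where $R_c$ increases in $g$, one end lands on $l_3$ at the unique energy $c\in(c_J,c_e)$ with $R_{\text{dou}}(c)=R_0$ (recall $R_{\text{dou}}$ decreases from $\infty$ to $0$) and the other runs to the $g$-axis, giving~(c); and in $P$ the same bookkeeping with $R_{\text{ell}}$ sweeping $(0,1)$ along $l_5$ forces one end onto $l_5$ and the other to the $g$-axis, giving~(d).

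The main obstacle is the regularity input in the $P$-region. For $S'$, $L$ and $S$ the non-vanishing of $\nabla R$, and hence the absence of closed level curves, is guaranteed respectively by Lemma~\ref{monotoneSs}, Lemma~\ref{lemma:periodsmonotone} and the Corollary to Lemma~\ref{lemmanocircle}; but in $P$ only the weaker statement that $\tau_{\xi}$ and $\tau_{\eta}$ both increase in $g$ is available---the Dullin--Montgomery monotonicity of $R_c$ there being merely conjectural---so I would either prove the $P$-analogue of Lemma~\ref{lemmanocircle} (no critical point of $R$ in $P$) or argue directly from the explicit formula for $R_P$ in Lemma~\ref{lemma:rotation} that $\Gamma$ cannot close up and must exit through $l_5$ and the $g$-axis. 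A secondary but genuinely necessary step is to confirm that the one-sided limit of $R$ on each critical curve, taken from inside the region, really agrees with the entry of Proposition~\ref{prop1}; Remark~\ref{ramket} is the model for this computation on $l_2$, and the analogous limits on $l_1$, $l_3$, $l_5$ and as $c\to 0$ must be verified to pin the endpoints down unambiguously.
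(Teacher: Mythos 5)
Your proposal is essentially the argument the paper intends: the corollary is stated with no proof at all, as an immediate consequence of Proposition \ref{prop1} (which supplies the ranges of the rotation functions on the critical curves bounding each region) together with the monotonicity statements of Lemmas \ref{lemma:periodsmonotone}, \ref{monotoneSs} and \ref{lemmanocircle}, and your level-set/intermediate-value reconstruction is exactly how one fills that in. The one caveat you raise yourself is real: for the $P$-region the paper only proves that $\tau_{\xi}$ and $\tau_{\eta}$ are separately increasing in $g$, which does not control the ratio $R_P$, and the monotonicity of $R_c$ there is explicitly left as the Dullin--Montgomery conjecture; so the step ruling out closed or non-proper level curves of $R$ in $P$ is not supplied by the paper's stated lemmas any more than by your sketch, and you are right that it would need the $P$-analogue of Lemma \ref{lemmanocircle} (or a direct argument from the formula for $R_P$ in Lemma \ref{lemma:rotation}, using that $R_P\in(0,1)$ while $R_{\text{ell}}$ sweeps all of $(0,1)$ along $l_5$) to be made airtight. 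Since you correctly locate the endpoint values via Proposition \ref{prop1} and Remark \ref{ramket}, and your $S'$ dichotomy between $l_1$ and the $g$-axis reproduces the discussion following the corollary in the paper, I would count this as the same proof, with the $P$-region regularity honestly flagged rather than glossed over.
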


In \cite[appendix B]{Contopoulos} Contopoulos proved by a direct calculation that in  ${S'}$ and ${S}$, it is always ${ \tau_{\eta} > \tau_{\xi} }$, namely $R \in (1, \infty) $. Geometrically, this means that there is no periodic orbit in both ${\Sigma_c^E}$ and ${\Sigma_c^M}$ which is closed after one revolution. Note that this result follows immediately from  Proposition \ref{prop1} and Corollary \ref{cor:bifurcation}. Moreover, we obtain 

\begin{Corollary}  In the region $P$, the rotation number is greater than zero and less than one: $R_P \in (0,1)$.  On the other hand, in the $L$-region we have $R_L \in (0, \infty).$
\end{Corollary}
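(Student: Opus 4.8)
The plan is to argue exactly as in the Contopoulos-type reasoning already invoked for the $S'$- and $S$-regions: read off the admissible rotation numbers in each region from the rotation numbers of the critical orbits on the bounding curves (Proposition \ref{prop1}), combined with the information, supplied by Corollary \ref{cor:bifurcation}, about which critical curves the $T_{k,l}$-torus families accumulate on. The single analytic input I need is that the rotation function extends continuously to each bounding critical curve with boundary value equal to the rotation number of the corresponding critical orbit; granting this, each $T_{k,l}$-torus family carries the constant value $k/l$, which must therefore coincide with the limiting critical-orbit rotation number at the point where the family accumulates.

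For the $P$-region I would proceed as follows. By Corollary \ref{cor:bifurcation}(d) every $T_{k,l}$-torus family in $P$ accumulates on $l_5$ in one direction. Since $R \equiv k/l$ along the family, continuity up to $l_5$ forces $k/l = R_{\text{ell}}(p)$ for the limiting point $p$ on $l_5$. By Proposition \ref{prop1}(f), $R_{\text{ell}}$ increases strictly from $0$ to $1$ along $l_5$, so $R_{\text{ell}}(p) \in (0,1)$ at every interior point, whence $k/l \in (0,1)$. As $P$ is connected and $R_P$ is continuous, $R_P(P)$ is an interval whose rationals all lie in $(0,1)$, so $R_P(P) \subseteq [0,1]$. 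The endpoints are then excluded: $R_P > 0$ because it is a ratio of strictly positive periods, and $R_P = 1$ is impossible since it would produce a family accumulating on an interior point of $l_5$ with $R_{\text{ell}} = 1$, contradicting $R_{\text{ell}} < 1$ there. Hence $R_P \in (0,1)$.

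For the $L$-region I would use Corollary \ref{cor:bifurcation}(c): every $T_{k,l}$-torus family in $L$ accumulates on the double-collision arc of $l_3$, the arc $c \in (c_J, c_e)$. By Proposition \ref{prop1}(d), $R_{\text{dou}}$ decreases strictly from $\infty$ to $0$ as $c$ runs over $(c_J, c_e)$ and hence attains every value in $(0,\infty)$. Consequently the rotation numbers realized by the families of $L$ fill all of $(0,\infty)$; since in the interior of $L$ both $\tau_\xi$ and $\tau_\eta$ are finite and positive (the $\eta$-motion meets only the simple roots $\eta = \pm 1$, at which the period integral converges, and the other $\eta$-roots are either $>1$ or complex), we conclude $R_L \in (0,\infty)$.

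The main obstacle is the analytic claim underpinning everything, namely that $R$ extends continuously up to the degenerate bounding curves with the expected values, i.e.\ $R_P \to R_{\text{ell}}$ on $l_5$ and $R_L \to R_{\text{dou}}$ on the double-collision arc of $l_3$. This reduces to the asymptotics of the complete elliptic integral $K$ in Lemma \ref{lemma:rotation} as a modulus degenerates: on $l_5$ one has $gc \to 1$, whence $k_2 \to 0$ and $K(k_2) \to \pi/2$, collapsing the $P$-formula for $R$ onto the expression for $R_{\text{ell}}$ in Proposition \ref{prop1}; the matching on $l_3$ is entirely analogous and relies on the same limits already carried out in the proof of Proposition \ref{prop1}. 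Once this continuity is in place, both assertions follow formally from Proposition \ref{prop1} and Corollary \ref{cor:bifurcation}.
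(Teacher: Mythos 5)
Your proposal is correct and follows essentially the same route the paper intends: the corollary is stated without a separate proof precisely because it is read off from Proposition \ref{prop1} (the ranges of $R_{\text{ell}}$ on $l_5$ and of $R_{\text{dou}}$ on $l_3$) together with Corollary \ref{cor:bifurcation} (which boundary curves the $T_{k,l}$-torus families accumulate on), exactly as you argue. Your extra care about continuity of $R$ up to the degenerate curves and the exclusion of the endpoint value $1$ in the $P$-region is consistent with the limits already computed in the proof of Proposition \ref{prop1}.
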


Thus, for a $T_{k,l}$-torus family, we have $k>l$ for $S'$ and $S$ while $k<l$ for $P$. In $L$-region even $k=l$ is possible.

\begin{Corollary}
\label{cor:closed}
Suppose that on an energy hypersurface $H^{-1}(c)$  there exists a regular periodic orbit $\gamma$ which is closed after one revolution. Then the energy level is greater than the critical Jacobi energy, i.e., $ c > c_J$, and the associated value $(g,c)$ is contained in the region $L$.
\end{Corollary}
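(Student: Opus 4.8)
The plan is to characterize exactly when a regular periodic orbit closes after a single revolution, i.e. when its rotation number equals $1$, and then to show that this forces membership in the $L$-region with $c>c_J$. By the Definition of rotation number, a regular periodic orbit $\gamma$ lying on a Liouville torus $(g,c)$ is closed after one revolution precisely when $R(g,c)=1$; indeed, $R=\tau_{\eta}/\tau_{\xi}=1$ means the $\xi$- and $\eta$-oscillations have equal periods, so the orbit closes up after exactly one $\xi$-period and one $\eta$-period. Thus the task reduces to locating the level set $\{R=1\}$ among the four regular regions $S'$, $S$, $L$, $P$.

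First I would dispose of the regions $S'$, $S$, and $P$ using the Corollaries that follow Proposition \ref{prop1}. The Corollary stating $R_{S'},R_S\in(1,\infty)$ (the Contopoulos fact, recovered from Proposition \ref{prop1} and Corollary \ref{cor:bifurcation}) shows that in $S'$ and $S$ the rotation number is strictly greater than $1$, so $R=1$ is never attained there. Dually, the Corollary giving $R_P\in(0,1)$ rules out the $P$-region, since there $R$ is strictly less than $1$. Hence the only region in which $R=1$ can occur is $L$, where by the last Corollary $R_L\in(0,\infty)$, a range that does contain the value $1$. This already pins down the region assertion.

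It remains to establish the energy bound $c>c_J$. The cleanest route is to recall from the description of the regular regions (Figure \ref{fig:region} and the critical curves $l_3\colon c=-g-2$ and $l_5\colon gc=1$) that the $L$-region lives entirely in the energy range $c>c_J$: the $L$-region is the region bounded by $l_3$ (along which, for $c<c_J$, one has the interior collision orbits rather than $L$-type motion) and is separated from the two-component regime by the critical Jacobi energy. Concretely, for $c<c_J$ the Hill region $\mathcal{K}_c$ has two disjoint components and the motion is of type $S'$ or $S$, never $L$; the $L$-type motion, in which the satellite is confined inside the boundary ellipse $\xi=\xi_1$ encircling both primaries, only becomes available once the two components merge, which happens exactly at $c=c_J$. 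Therefore any $(g,c)\in L$ satisfies $c>c_J$, and combining this with the region identification above completes the proof.

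The main obstacle I anticipate is making the claim $R_L\ni 1$ genuinely rigorous rather than merely compatible with $R_L\in(0,\infty)$: knowing the range is an interval containing $1$ guarantees $R=1$ is attainable somewhere in $L$, but to conclude that \emph{every} one-revolution regular orbit lies in $L$ one only needs the one-sided exclusions in $S',S,P$, which are clean. The subtler point is the strictness of the inequalities $R_{S'},R_S>1$ and $R_P<1$ at the boundaries of those regions; I would lean on the monotonicity results (Lemma \ref{lemma:periodsmonotone}, Lemma \ref{monotoneSs}, and Corollary \ref{cor:bifurcation}) together with the boundary limits computed in Proposition \ref{prop1} to ensure the value $1$ is not reached on the closures of $S'$, $S$, or $P$ in a way that would produce a regular one-revolution orbit outside $L$. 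Once these exclusions and the geometric placement of $L$ above $c_J$ are in hand, the statement follows immediately.
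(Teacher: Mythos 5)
Your proposal is correct and follows essentially the same route as the paper: the corollary is deduced immediately from the fact that a regular orbit closed after one revolution satisfies $R=1$, combined with the ranges $R_{S'},R_S\in(1,\infty)$, $R_P\in(0,1)$, $R_L\in(0,\infty)$ established just before it, and the observation that the $L$-region only exists once the two components of the Hill's region merge at $c=c_J$. Your closing worry about boundary behaviour is moot, since a \emph{regular} orbit corresponds by definition to a regular value $(g,c)$ lying in the open interior of one of the four regions.
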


Corollary \ref{cor:bifurcation} implies that a ${T_{k,l}}$-type orbit is one of the five critical orbits at the extremal energies, which is abbreviated by  $c_{k,l}$. Let us see a transition of  periodic orbits in a $T_{k,l}$-torus family by increasing an energy level (cf. the transition in $T_{k,l}$-torus family in the rotating Kepler problem \cite[section 6]{RKP}). We start with $c<c_J$. A $T_{k,l}$-torus family is born out of a multiple cover of  the interior collision orbit at $c =c_{k,l}^{\text{int}}$. As the energy increases  periodic orbits become  regular. If this torus family represents the motions in the Moon component, then it  becomes a multiple cover of the exterior collision orbit $\gamma_{\text{ext}}^{\text{M}}$ in the Moon component at $c=c_{k,l}^{\text{ext,M}}$. Suppose that the torus family represents the motions in the Earth component. Since the rotation functions $R_{\text{int}}$ and $R_{\text{ext}}^{\text{E}}$  are strictly increasing, there exists the unique energy level $c_0 = c_0(\mu) < c_J$ such that a torus   family which bifurcates at $c \in (c_0, c_J)$(or $c<c_0)$ ends at the $g$-axis(or the line $l_1$).   Notice that for $\mu = 1/2$ this energy does not appear and any torus family which is born out of a multiple cover of the interior collision orbit ends with a multiple cover of the exterior collision orbit. We now consider the case $c_J < c < c_e$. A $T_{k,l}$-torus family is born out of a multiple cover of the double collision orbit at $c = c_{k,l}^{\text{dou}} $. As an energy increases, periodic orbits again become regular  and the family converges to the coordinate axis $c=0$. Finally, for $ c_e < c < 0$ a $T_{k,l}$-torus family is born out of a multiple cover of the elliptic orbit at $c=c_{k,l}^{\text{ell}}$ and the family behaves as the case with $c_J < c < c_e$, as the energy increases. Notice that no  Liouville torus bifurcates from the critical orbit whose rotation number is zero (the double collision orbit with $c_e < c< 0$) or infinite (the hyperbolic  orbit and the exterior collision orbit in the Moon component with $  c_h < c < 0$).  These critical orbits  are  unstable periodic orbits and the others    are stable, see  \cite[chapter 3]{Bifurcation}.

We conclude this section with providing  upper bounds of the rotation functions of the exterior collision orbits for energies below the critical Jacobi energy.

\begin{Lemma} \label{lemma:exteriorrotation}
 The rotation function $R_{\text{ext}}$ of the exterior collision orbit  in each component is less than $2$ for $c<c_J$. 
\end{Lemma}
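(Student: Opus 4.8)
The plan is to reduce the lemma to a purely algebraic inequality by discarding the elliptic integral. Recall from the proof of Proposition \ref{prop1} (Cases 2 and 3) that in both the Earth and the Moon components the rotation function of the exterior collision orbit has the common shape
\[
R_{\text{ext}} = \frac{\pi}{2}\,\frac{1}{\sqrt{1-2k_1^2}\,K(k_1)},
\]
with an explicit modulus $k_1=k_1(c,\mu)$; for the Moon component this form is valid precisely because $c<c_J<c_h$, so we never enter the regime $R_{\text{ext}}^{\text{M}}=\infty$. The first move is to throw away the transcendental factor: since $K$ is increasing with $K(k_1)\geq K(0)=\pi/2$, one obtains the crude bound $R_{\text{ext}}\leq (1-2k_1^2)^{-1/2}$. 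As $(1-2\cdot\tfrac38)^{-1/2}=2$, this reduces the entire lemma to the single algebraic inequality $k_1^2<3/8$.

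Next I would verify $k_1^2<3/8$ for each component by normalizing the modulus. Setting $a=1-2\mu$ and using $1-a^2=4\mu(1-\mu)$, the two radicands factor as $c^2\mp 2ac+1=(a\mp c)^2+4\mu(1-\mu)$. For the Earth component the numerator of $k_1^2$ is $1-2\mu-c=a-c$, so $k_1^2<3/8$ is equivalent to
\[
\frac{a-c}{\sqrt{(a-c)^2+4\mu(1-\mu)}}>\frac14
\quad\Longleftrightarrow\quad
15(a-c)^2>4\mu(1-\mu),
\]
and the Moon component (where $a+c<0$ since $c<c_J<c_h=-a$) yields the analogous condition $15(a+c)^2>4\mu(1-\mu)$. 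The hypothesis $c<c_J$ enters only through the bound $-c>1+2\sqrt{\mu(1-\mu)}>1$.

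The Earth case is then immediate, since $a-c=(1-2\mu)+(-c)>1$ forces $15(a-c)^2>15>1\geq 4\mu(1-\mu)$. The Moon component is the only place needing care and is where I expect the (modest) main obstacle: here both sides of $15(a+c)^2>4\mu(1-\mu)$ tend to $0$ as $\mu\to 0$, so a crude bound like $-c>1$ is insufficient and one must exploit the exact threshold $c=c_J$. Concretely, $-(a+c)=(-c)-1+2\mu>2\mu+2\sqrt{\mu(1-\mu)}$, whence, writing $4\mu(1-\mu)=4(\sqrt{\mu(1-\mu)})^2$,
\[
15(a+c)^2>60\bigl(\mu+\sqrt{\mu(1-\mu)}\bigr)^2>4\mu(1-\mu),
\]
with room to spare. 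Since the left-hand side of each radicand inequality only grows as $c\to-\infty$, it suffices to check them at the boundary value $c=c_J$, which is exactly what these estimates do. This establishes $k_1^2<3/8$ throughout $c<c_J$, and hence $R_{\text{ext}}<2$ in both components.
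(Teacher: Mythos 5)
Your proof is correct and follows essentially the same route as the paper's: both arguments discard the elliptic integral via $K(k_1)\geq K(0)=\pi/2$ and reduce the lemma to the algebraic threshold $k_1^2<3/8$, which is then verified using $(c_J+1)^2=4\mu(1-\mu)$. The only (cosmetic) difference is that the paper first reduces to the Moon component via $R_{\text{ext}}^E<R_{\text{ext}}^M$ and to the boundary energy $c=c_J$ via monotonicity of $R_{\text{ext}}^M$ in $c$, whereas you check the equivalent inequalities $15(a\mp c)^2>4\mu(1-\mu)$ for both components and all $c<c_J$ directly.
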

\begin{proof} Fix ${\mu \in (0, 1/2]}$. By Corollary \ref{cor:exterior}, it suffices to prove the assertion for the Moon component. Moreover, since ${R_{\text{ext}}^M  }$ is increasing in $c$, it suffices to show $R_{\text{ext}}^M(c_J) <2$. Recall that
\begin{eqnarray*}
R_{\text{ext}}^M(c_J) = \frac{\pi}{2\sqrt{1-2k_1^2}K(k_1)} \leqslant \frac{1}{\sqrt{1-2k_1^2}},
\end{eqnarray*}
where

\begin{eqnarray*}
k_1^2 = \frac{1}{2} \bigg( 1 + \frac{ c_J + 1-2\mu}{\sqrt{ c_J^2 + 2(1-2\mu)c_J +1}} \bigg).
\end{eqnarray*}
We compute that
\begin{eqnarray*}
\frac{1}{\sqrt{1-2k_1^2}} < 2 \;\;\; &\Longleftrightarrow&  \;\;\; k_1^2 < \frac{3}{8} \\
&\Longleftrightarrow & \;\;\; \frac{\sqrt{ c_J^2 + 2(1-2\mu)c_J +1}}{4} < -c_J-(1-2\mu) \\
&\Longleftrightarrow & \;\;\; 64\mu(1-\mu) < 15 \big( c_J^2 + 2(1-2\mu)c_J +1 \big) .
\end{eqnarray*}
It then follows from $ (c_J+1)^2 = 4\mu(1-\mu)$ that
\begin{eqnarray*}
R_{\text{ext}}^M(c_J) < 2 \;\;\; &\Longleftrightarrow& \;\;\; 16(c_J+1)^2 < 15 \big( c_J^2 + 2(1-2\mu)c_J+1 \big) \\
&\Longleftrightarrow & \;\;\;  (c_J+1)^2 + 60\mu c_J <0 .
\end{eqnarray*}
We observe that 
\begin{eqnarray*}
 (c_J+1)^2 + 60\mu c_J &=&  4\mu(1-\mu) + 60\mu( -1-2\sqrt{\mu(1-\mu)} ) \\
&=& -4\mu^2 -56\mu -120\mu \sqrt{\mu(1-\mu)} <0.
\end{eqnarray*}
This completes the proof of the lemma. 
\end{proof}

\section{Main arguments}\label{sec:maintheorem}

\subsection{Definition of the Conley-Zehnder index} 
 
We recall the definition of the Robbin-Salamon index \cite{spectral}. Let  $\psi : [0,T] \rightarrow Sp(2n) $ be a smooth path of symplectic matrices. A point ${t \in [0,T] }$ is called {a crossing} if $\det(\psi(t) -  I ) =0$. For a crossing $t \in [0,T] $, {the crossing form} $Q_t$ is defined as the quadratic form 
\begin{equation*}
 Q_t(v, v) := \omega( v, \dot{\psi}(t)v), \;\;\;\;\; v \in \text{Eigen}_{1}(\psi(t)) ,
\end{equation*}
where $\text{Eigen}_1 (\psi(t)) $ is the eigenspace of $\psi(t)$ to the eigenvalue 1. A crossing $ t \in [0,T]$ is called {nondegenerate} if the crossing from $Q_t$ is nondegenerate, i.e., the corresponding matrix does not have  an eigenvalue  equal to zero. Assume that the path $\psi$ has only nondegenerate crossings. In particular, they are isolated. Then {the Robbin-Salamon index} is defined by
\begin{equation*}
\mu_{\text{RS}}(\psi) := \frac{1}{2} \text{sgn} (Q_0) + \sum_{ t \in (0,T),  \;  \text{crossing}} \text{sgn}(Q_t) + \frac{1}{2} \text{sgn}(Q_T).
\end{equation*}
If a path $\psi$ has degenerate crossing, we homotope $\psi$ to $\widetilde{\psi}$ with only nondegenerate crossings, where during the homotopy the  endpoints are fixed. Robbin-Salamon  proved that the Robbin-Salamon index is invariant under homotopy with fixing endpoints. Thus, we define

\begin{equation*}
\mu_{\text{RS} }(\psi ) := \mu_{\text{RS}} ( \widetilde{\psi}) .
\end{equation*}

Let $(M, \xi= \ker \alpha)$ be a contact three manifold with $\pi_2(M)=0$. Let $\gamma :S^1 \rightarrow M$ be a contractible Reeb orbit and let $u : D \rightarrow M$ be its spanning disk, i.e., $u(e^{2\pi  i t}) = \gamma(t)$. Choose a trivialization $\Psi : u^* \xi \rightarrow D \times \R^2$ of the pull-back bundle $u^* \xi \rightarrow D$. Then we obtain a path of symplectic matrices $\psi : [0,T] \rightarrow Sp(2)$, which is defined as
\begin{eqnarray*}
\psi(t) := \Psi(t) \left. \begin{matrix} d\phi_H^t(x) \end{matrix} \right|_{\xi} \Psi^{-1}(0),
\end{eqnarray*}
where $\Psi(t) = \Psi(e^{2 \pi i t})$, by linearizing the Hamiltonian flow along $\gamma$ with respect to the chosen trivialization. We define the Conley-Zehnder index of the periodic Reeb orbit $\gamma$ by
\begin{eqnarray*}
\mu_{\text{CZ} } (\gamma) := \mu_{\text{RS}}(\psi).
\end{eqnarray*}
Since $\pi_2(M)=0$ and $D$ is contractible, this definition is independent of the involved choices on the spanning disk and the trivialization.

\begin{Remark} \rm In this paper, we choose the definition of the Conley-Zehnder index as the one which is given  in \cite[chapter 3]{HWZ} which is lower-semicontinuous. It is well known that  this definition   coincides with the  above definition via the Robbin-Salamon index   for a nondegenerate periodic orbit, i.e., a periodic orbit $\gamma$ such that $\left. \begin{matrix} d\phi_H^T(x) \end{matrix} \right|_{\xi}$ does not have an eigenvalue equal to 1. Notice that the Robbin-Salamon index is not lower-semicontinuous and two definitions are different for degenerate orbits. 
\end{Remark}

\subsection{Proof of Theorem \ref{thm:formula}} In what follows, by an evenly-covered periodic orbit we mean a periodic orbit which is the 2N-fold iterate of some primitive periodic orbit, where N is a positive integer. We  compute the Conley-Zehnder indices of the evenly-covered  interior and  exterior collision orbits, which are contractible,  for energies below the critical Jacobi energy.   

We now introduce the doubly-covered elliptic coordinates  $(\lambda, \nu) \in \R \times [-\pi, \pi]$, which are defined by 
\begin{equation*}
q_1 = \frac{1}{2} \cosh \lambda \cos \nu \;\;\; \text{ and } \;\;\; q_2 = \frac{1}{2} \sinh \lambda \sin \nu 
\end{equation*}
and the momenta are determined by the relation $p_1 dq_1 + p_2 dq_2 = p_{\lambda} d\lambda + p_{\nu} d\nu$. Note that the new coordinates $(\lambda, \nu)$ are related to the single-covered elliptic coordinates $(\xi, \eta)$ by
\begin{equation*}
\label{eq:variables}
\xi = \cosh \lambda \;\;\; \text{ and } \;\;\;  \eta = \cos \nu
\end{equation*}
which imply that $\tau_{\lambda} = 2\tau_{\xi} $ and $\tau_{\nu} = 2 \tau_{\eta} $. The rotation number does not change. 
The Hamiltonian and the integral in this new coordinates are then given by
\begin{equation*}
H(\lambda, \nu, p_{\lambda}, p_{\nu}) = \frac{ H_{\lambda} + H_{\nu} }{ \cosh^2 \lambda - \cos^2 \nu } 
\end{equation*}
and
\begin{equation*}
G= -\frac{ H_{\lambda} \cos^2 \nu + H_{\nu} \cosh^2 \lambda }{\cosh^2 \lambda - \cos^2 \nu } ,
\end{equation*}
where
\begin{equation*}
H_{\lambda} = 2p_{\lambda}^2 - 2 \cosh \lambda\;\;\; \text{ and } \;\;\;H_{\nu} = 2p_{\nu}^2 + 2(1-2\mu) \cos \nu .
\end{equation*}
We define the regularized Hamiltonian $K$ by
\begin{equation*}
K := K_c = (H-c)( \cosh^2 \lambda - \cos^2 \nu ) = K_{\lambda} + K_{\nu},
\end{equation*}
where 
\begin{equation*}
K_{\lambda } = 2p_{\lambda}^2 - 2 \cosh\lambda -c \cosh^2 \lambda \;\;\; \text{ and } \;\;\; K_{\nu} = 2p_{\nu}^2 + 2(1-2\mu)\cos \nu + c \cos^2 \nu.
\end{equation*}
 With the time scaling $dt = (\cosh^2 \lambda- \cos^2 \nu) d\tau$, we examine orbits of $K$ with energy 0 and time parameter $\tau$. The regularized Hamiltonian vector field  is given by
\begin{equation*}
X_K = 4p_{\lambda} \partial_{\lambda} + 4 p_{\nu} \partial_{\nu} + 2\sinh \lambda ( 1 + c \cosh \lambda) \partial_{p_{\lambda}} + 2 \sin \nu ( 1-2\mu  + c \cos \nu) \partial _{p_{\nu}}.
\end{equation*}

\begin{Remark} \rm For $c<c_J$, the regularized energy hypersurface $K^{-1}_c(0)$ is diffeomorphic to the disjoint union of two three-spheres, which is the double cover(or the universal cover) of $\R P^3$. For $c>c_J$, it is diffeomorphic to $S^2 \times S^1$, which is the double cover of the connected sum $\R P^3 \sharp \R P^3$. 
\end{Remark}

\textbf{Case1:} \textit{The interior collision orbits}. 

 The interior collision orbit  has constant $\lambda$, and hence $p_{\lambda} = 0 $. It follows that   the Hamiltonian flow along the interior collision orbit   is given by
\begin{equation*}
X_K   =   4p_{\nu}\partial_{\nu} +   2\sin \nu (  1-2\mu  + c \cos \nu )\partial_{p_{\nu}} .
\end{equation*}
In particular, the period  of the regularized orbit equals the $\nu$-period. On the other hand, the two linearly independent vector fields $\partial_{\lambda}$ and $\partial_{p_{\lambda}}$ lie in $\text{ker} \alpha \cap \text{ker}dK$, where $\alpha$ is the canonical 1-form.  In other words, they  trivialize the contact structures along the interior collision orbits.  With respect to this trivialization,  the linearized Hamiltonian flow along $\gamma_{\text{int}}$ onto the contact structure is a solution of the ODE
\begin{equation*}
\label{eq:interiorODE}
\dot{\psi}_{\text{int}} = \begin{pmatrix} 0 & 4 \\  2( 1+c) & 0 \end{pmatrix} \psi_{\text{int}},
\end{equation*}  
where $ 2(1+c)<0$ in view of $c<c_J$.  Solving the ODE yields the map
\begin{equation*}
\psi_{\text{int}}(\tau) = \begin{pmatrix} \cos  2\sqrt{-2( 1+c)}  \tau & \sqrt{ \frac{-2}{ 1+c}} \sin   2\sqrt{ -2( 1+c)}   \tau \\ - \sqrt{ \frac{  1+c}{-2}} \sin  2\sqrt{-2( 1+c)}  \tau &  \cos   2\sqrt{ -2( 1+c)}   \tau \end{pmatrix}.
\end{equation*} 
The crossings occur at 
\begin{equation*}
\tau \in \frac{ \pi}{\sqrt{-2( 1+c)} } \Z =   \tau_{\lambda}^{\text{int}} \Z.
\end{equation*}
and the crossing forms have signature 2.  Denote by $\gamma_{\text{int}}^{2N}$ the $2N$-th iteration of $\gamma_{\text{int}}$. The argument so far implies that it fails to be  nondegenerate if and only if $2N \tau_{\nu}^{\text{int}} \in  \tau_{\lambda}^{\text{int}} \Z$, or equivalently
\begin{equation*}
2N R _{\text{int}} \in \Z ,
\end{equation*}
and for a nondegenerate orbit the Conley-Zehnder index is then given by
\begin{equation}
\label{eq:interior}
\mu_{\text{CZ}}( \gamma_{\text{int}}^{2N}) = 1 + 2 \max \left \{ k \in \Z : k < 2N R_{\text{int}} \right \}.
\end{equation}

\;\;\;
\textbf{Case2:} \textit{The exterior collision orbits}. 

We proceed as in the previous case. Since the exterior collision orbits have constant $\nu$, we have $p_{\nu}=0$. Therefore,   the Hamiltonian flow along the exterior collision orbit   is given by
\begin{equation*}
X_K   =     4p_{\lambda} \partial_{\lambda}  + 2\sinh \lambda ( 1 + c \cosh \lambda) \partial_{p_{\lambda}} .
\end{equation*}
The period  of the regularized orbit  equals the  $\lambda$-period. The two vector fields ${\partial_{\nu}}$ and $\partial_{p_{\nu}}$ lie in $\text{ker}\alpha \cap \text{ker}dK$ and this implies that they trivialize the contact structures along the orbits.

 Following the similar computation as in the previous case, we obtain the associated ODE 
\begin{equation*}
\label{eq:ODE1}
\dot{\psi}_{\text{ext}}^{\pm} = \begin{pmatrix} 0 & 4 \\ - 2(  \pm(1-2\mu)-c) & 0 \end{pmatrix} {\psi}_{\text{ext}}^{\pm}
\end{equation*} 
for the Earth($+$) and the Moon($-$) components, respectively. Since $\pm(1-2\mu)-c>0$,  the above ODE   yields the map
\begin{equation*} 
\psi_{\text{ext}}^{\pm}(\tau) = \begin{pmatrix} \cos 2\sqrt{ 2(\pm( 1-2\mu)  -c)}\tau & \sqrt{ \frac{ 2 }{ \pm( 1-2\mu)  - c}} \sin 2\sqrt{ 2( \pm(1-2\mu)  -c)}\tau \\ - \sqrt{ \frac{ \pm( 1-2\mu) -c}{2}} \sin 2\sqrt{ 2(\pm( 1-2\mu)  -c)}\tau & \cos 2\sqrt{ 2(\pm( 1-2\mu ) -c)}\tau \end{pmatrix}.
\end{equation*} 
This path of symplectic matrices has crossings at 
\begin{equation*}
\tau \in \frac{ \pi}{ \sqrt{2( \pm( 1-2\mu)  -c )}} \Z = \tau_{\nu}^{\text{ext},\pm} \Z 
\end{equation*} 
and the crossing form has signature 2. 
It follows that the $2N$-covered  exterior collision orbit fails to be nondegenerate if and only if 
\begin{equation*}
2N / R_{\text{ext}}^{\pm} \in \Z,
\end{equation*}
and for a nondegenerate exterior collision orbit  the Conley-Zehnder index is given by
\begin{equation}\label{eq:extcz}
\mu_{\text{CZ}}(\gamma_{\text{ext}}^{\pm, 2N} ) = 1 + 2 \max \left \{ k \in \Z : k < 2N/R_{\text{ext}}^{\pm}  \right \} .
\end{equation} 

   \;\;\;

\;\;\;

\;\;\;

We now consider the doubly-covered collision orbits. By Proposition \ref{prop1} and the formula (\ref{eq:interior}) the doubly-covered interior collision orbit is nondegenerate if $R \neq k/2$, $k \geqslant 3$, and for $R \in ( (k-1)/2, k/2)$ the Conley-Zehnder index is given by
\begin{equation*}
\mu_{\text{CZ}}(\gamma_{\text{int}}^{2} ) =    1 + 2 \max \left \{ N \in \Z : N < k  \right \} = 2k-1.
\end{equation*}

For the doubly-covered exterior collision orbit,  Lemma \ref{lemma:exteriorrotation}  and the formula (\ref{eq:extcz}) imply that it is always nondegenerate and  the Conley-Zehnder index equals 3. This completes the proof of the theorem.

\subsection{Proof of Theorem \ref{thm:index}} In \cite[theorem 1.1]{Kim2}, it is shown that the doubly-covered elliptic coordinates provide a 2-to-1 symplectic embedding into $\R^4$ having the property that the image of the regularized Moon component is convex for $c<c_J$. Then by \cite[theorem 3.7]{HWZ} the regularized Moon component is dynamically convex. 

To show that the regularized Earth component is also dynamically convex, we observe that by Remark \ref{ramket}, Lemma \ref{monotoneSs} and Remark  \ref{ramkslope} the rotation function $R_c=R_c(g)$ is strictly decreasing for $g \in I_{\epsilon}:=(-c-2(1-2\mu)-\epsilon, -c+2(1-2\mu))$, where   $\epsilon>0$ small enough, so that the $T_{k,l}$-torus family is Morse-Bott nondegenerate for $g \in I_{\epsilon}$. By homotopy invariance of the Conley-Zehnder index that any two periodic orbits in the $T_{k,l}$-torus family for $g \in I_{\epsilon}$ have the same Conley-Zehnder index.  Choose any $T_{k,l}$-type orbit associated with $(g,c) \in S$. Since this point represents periodic orbits both in the Earth and Moon components, periodic orbits  associated with $(g,c) \in S$ in the Earth component have the same Conley-Zehnder index as such orbits in the Moon components, i.e., the Conley-Zehnder index is greater than or equal to 3.  We then conclude from dynamical convexity of the Moon component and Morse-Bott nondegeneracy of the torus families for $g \in I_{\epsilon}$  that any (evenly-covered) torus type orbit in the $S'$-region is also of Conley-Zehnder index greater than or equal to 3. 

Albers-Fish-Frauenfelder-van Koert proved that in the rotating Kepler problem the circular orbits are contractible if and only if they are evenly-covered , see \cite[claim in section 7.2]{RKP}. Their argument also holds for the Euler problem, i.e., the interior and the exterior collision orbits are contractible if and only if they are evenly-covered. In particular, by Theorem \ref{thm:formula} the doubly-covered exterior collision orbit has the Conley-Zehnder index 3. This implies that  the regularized Earth component is dynamically convex. This finishes the proof of the theorem.

\begin{Remark} \rm Dullin-Montgomery conjecture for the $S$-region which is mentioned in the previous section has the following implication: since the rotation function is strictly decreasing also in the $S$-region, the whole $T_{k,l}$-torus family is Morse-Bott nondegenerate. Therefore, to determine the Conley-Zehnder index of a torus type orbit   it suffices to determine the index of one periodic orbit in each torus family. Then using invariance of the local Floer homology (see  \cite{Kai}) as in \cite[chapter 7]{RKP} one can determine that  the Conley-Zehnder index of the $T_{k,l}$-torus family equals $2k-1$. Note that since the covering number of the $T_{k,l}$-torus family is $l$, to be the orbits evenly-covered, $l$ must be greater than 2. It follows from $k>l$ that $k \geqslant 3$ and then the Conley-Zehnder indices of any contractible torus type orbits are greater than 5. In particular, the doubly-covered exterior collision orbit is the unique periodic orbit of Conley-Zehnder index 3 on each compact component.
\end{Remark}

\appendix

\section{Computations}
\label{appendix:lemma}
In this appendix, we prove Lemma \ref{lemma:periodsmonotone}. It suffices to compute with the integrals (\ref{integralxi}) and (\ref{integraleta}). We denote them by $I_{\xi} $ and $I_{\eta}$, respectively. For convenience,  we introduce two functions ${A_{\mu} =  (1-2\mu)^2-gc }$ and ${B= g-c}$.  We first compute that 

\begin{eqnarray*}
\frac{ \partial I_{\eta}^{S'}}{ \partial g} &=& \frac{1}{ 4 A_{\mu}^{5/4}} \bigg( -\sqrt{A_{\mu}} \frac{\partial K}{\partial r_1^2} + c\big( K(r_1) - \frac{B}{2\sqrt{A_{\mu}}} \frac{\partial K}{\partial r_1^2} \big) \bigg) \\
&=&\frac{1}{ 4 A_{\mu}^{5/4}} \bigg( -\sqrt{A_{\mu}} \frac{\partial K}{\partial r_1^2} + c\big( K(r_1) + ( 2r_1 ^2 -1) \frac{\partial K}{\partial r_1^2} \big) \bigg) .
\end{eqnarray*}
Since 
\begin{eqnarray}
\label{eq:compare}
  K(r_1) + (2r_1^2-1)\frac{ \partial K}{\partial r_1^2}   =   \frac{\pi   }{2} \sum_{n=0}^{\infty} \begin{pmatrix} \displaystyle \frac{ (2n-1) !!}{(2n)!!} \end{pmatrix}^2 \frac{ (2n+1)(2n+3)}{4(n+1)} r_1^{2n}  > 0,
\end{eqnarray}
we conclude that ${\partial I_{\eta}^{S'} / \partial g < 0 }$. Plugging $\mu = 0$ gives rise to the same result for $I_{\xi}^{S'} = I_{\xi}^{S }=I_{\xi}^L$.

We now consider $  I_{\eta}^S   $. We compute that

\begin{eqnarray*}
\frac{\partial  I_{\eta}^S}{\partial g} &=& \frac{ 2 }{ (B +2 \sqrt{A_{\mu}})^{3/2} } \bigg( \big( B + 2\sqrt{A_{\mu}} \big) \frac{\partial K}{\partial r_2^2} \frac{\partial r_2^2}{\partial g} -  \big( 1 - \frac{c}{\sqrt{A_{\mu}} } \big)\frac{ K(r_2)}{2}  \bigg) \\
&=& \frac{ 2 }{ (B +2 \sqrt{A_{\mu}})^{3/2} } \bigg( \frac{ 2cB +4A_{\mu}}{ (B+2\sqrt{A_{\mu}})\sqrt{A_{\mu} }}  \frac{\partial K}{\partial r_2^2}   -  \big( 1 - \frac{c}{\sqrt{A_{\mu}} } \big)\frac{ K(r_2)}{2}  \bigg) \\
&=& \frac{ 2 }{ (B +2 \sqrt{A_{\mu}})^{3/2} } \bigg( \frac{1}{2} \big( \frac{ 8\sqrt{A_{\mu}}}{ B+2\sqrt{A_{\mu}}} \frac{\partial K}{\partial r_2^2} - K(r_2) \big) + \frac{c}{2 \sqrt{A_{\mu}}} \big( \frac{ 4B}{ B+2\sqrt{A_{\mu}}} \frac{\partial K}{\partial r_2^2} + K(r_2) \big) \bigg)\\
&=& \frac{ 2 }{ (B +2 \sqrt{A_{\mu}})^{3/2} } \bigg( \frac{1}{2} \big( (2- 2r_2^2)\frac{\partial K}{\partial r_2^2} - K(r_2) \big) + \frac{c}{2 \sqrt{A_{\mu}}} \big( \frac{ 4B}{ B+2\sqrt{A_{\mu}}} \frac{\partial K}{\partial r_2^2} + K(r_2) \big) \bigg).
\end{eqnarray*}
Since 
\begin{eqnarray*}
(2-2r_2^2)\frac{\partial K}{\partial r_2^2} - K(r_2) = - \frac{\pi}{2} \sum_{n=0}^{\infty} \frac{ (2n-1)!! (2n+1)!!}{ (2n)!! (2n+2)!!} r_2^{2n} <0,
\end{eqnarray*}
we conclude that the derivative $\partial  I_{\eta}^S/ \partial g$ is negative.

Consider the $\eta$-period in the region $L$ or $P$ with $(1-2\mu)^2 \geqslant gc$. We compute that
\begin{eqnarray*}
\frac{\partial I_{\eta}}{\partial g} &=& \frac{2}{\big( -B + 2\sqrt{A_{\mu} } \big)^{3/2}} \bigg( \frac{ 4A_{\mu} +2cB}{ \sqrt{A_{\mu}} ( -B+2\sqrt{A_{\mu}} ) } \frac{ \partial K}{\partial r_3^2} + \big( 1 + \frac{c}{\sqrt{A_{\mu}}} \big) \frac{K(r_3)}{2}  \bigg) \\
&=&\frac{2  }{ \big( -B+2 \sqrt{A_{\mu}}  \big)^{3/2}} \bigg(   \frac{c}{2\sqrt{A_{\mu}}} \big(  \frac{ 4B}{ -B+2\sqrt{A_{\mu}} }\frac{\partial K}{\partial r_3^2} + K(r_3) \big) + \frac{ 4\sqrt{A_{\mu}}}{-B+ 2\sqrt{A_{\mu}} } \frac{\partial K}{\partial r_3^2} + \frac{K(r_3)}{2} \bigg)\\
&=&\frac{2  }{ \big( -B+2 \sqrt{A_{\mu}}  \big)^{3/2}} \bigg(   \frac{c}{2\sqrt{A_{\mu}}} \big(  (2r_3^2-4)\frac{\partial K}{\partial r_3^2} + K(r_3) \big) + \frac{ 4\sqrt{A_{\mu}}}{-B+ 2\sqrt{A_{\mu}} } \frac{\partial K}{\partial r_3^2} + \frac{K(r_3)}{2} \bigg).
\end{eqnarray*}
Observe that
\begin{eqnarray*}
(2r_3^2 -4)\frac{ \partial K}{\partial r_3^2} + K(r_3) = -\frac{\pi}{2} \sum_{n=0}^{\infty}   \frac{ (2n-1)!! (2n+1)!!}{ (2n-2)!!(2n+2)!!}   r_3^{2n} < 0 .
\end{eqnarray*}
This implies that $\partial I_{\eta} / \partial g> 0 $. Plugging $\mu=0$ gives rise to the same result for $I_{\xi}^P$.

It remains to check the $\eta$-period for the region $L$ or ${P}$ with $(1-2\mu)^2 < gc$. Similarly, we compute that

\begin{eqnarray*}
 \frac{\partial I_{\eta}}{\partial g} &=& \frac{ 2 }{ \big( B^2 - 4A_{\mu} \big)^{5/4} } \bigg ( \frac{ \partial K}{\partial r_4^2} \frac{\partial r_4^2}{\partial g} \big(B^2-4A_{\mu} \big) - \frac{ B+2c}{2} K(r_4) \bigg)\\
&=& -\frac{ 2 }{ \big( B^2 - 4A_{\mu} \big)^{5/4} } \bigg ( \frac{2A_{\mu}+cB}{\sqrt{ B^2-4A_{\mu}}}        \frac{ \partial K}{\partial r_4^2}  +  \frac{ B+2c}{2} K(r_4) \bigg)\\
&=& -\frac{ 2 }{ \big( B^2 - 4A_{\mu} \big)^{5/4} } \bigg ( \frac{2A_{\mu} }{\sqrt{ B^2-4A_{\mu}}}        \frac{ \partial K}{\partial r_4^2}  +  \frac{ B }{2} K(r_4) + c \big( \frac{  B}{\sqrt{ B^2-4A_{\mu}}}        \frac{ \partial K}{\partial r_4^2}  +    K(r_4) \big)  \bigg)\\
&=& -\frac{ 2 }{ \big( B^2 - 4A_{\mu} \big)^{5/4} } \bigg ( \frac{2A_{\mu} }{\sqrt{ B^2-4A_{\mu}}}        \frac{ \partial K}{\partial r_4^2}  +  \frac{ B }{2} K(r_4) + c \big( (2r_4^2-1)      \frac{ \partial K}{\partial r_4^2}  +    K(r_4) \big)  \bigg).
\end{eqnarray*}
Together with (\ref{eq:compare}) it follows that $\partial I_{\eta} / \partial g > 0$ if $B<0$. We now assume that $B>0$.  Since $A_{\mu}<0$ and  $B+2c = g+c <0$, the second equality  implies that $\partial I_{\eta} / \partial g > 0$. This completes the proof of the lemma.

\section{Contact structures on energy hypersurfaces}

A hypersurface $\Sigma$ in a symplectic manifold $(M, \omega)$ is said to be {of restricted contact type} if there exists a Liouville vector field $Y$ on $M$, i.e., $\mathcal{L}_Y \omega = \omega$, which is transverse to $\Sigma$. In this case, the 1-form $\lambda := \iota_Y \omega = \omega(Y, \cdot)$ defines a contact form on $\Sigma$. 

In this appendix we show that energy hypersurfaces in the Euler problem for $c<c_J$ are of restricted contact type. The argument is  a slight modification of that in \cite[chapter 5]{contact}(in fact, it is much simpler). We repeat that here just for completeness.

Fix $c \in (-\infty, c_J)$. Consider the following Liouville vector field
\begin{equation*}
X= ( q_1+1/2)  \partial_{q_1} + q_2 \partial_{q_2} =: (q-E) \partial_q.
\end{equation*} 
We will show that the vector field $X$ intersects $H^{-1}(c)$ transversally for any $\mu \in (0,1)$.  Without loss of generality we may assume that $q_2 \geqslant 0$. We introduce the Earth polar coordinates $(r , \theta) \in (0, \infty) \times  [0, \pi]$, i.e., $(q_1, q_2) - E = ( r \cos \theta, r \sin \theta).$  Then the Hamiltonian and the vector field $X$ become
\begin{equation*} \label{polarhamiltonian}
H(r, \theta, p_1, p_2)=  \frac{1}{2}|p|^2 + V(r, \theta)=\frac{1}{2}|p|^2 - \frac{1-\mu}{r} - \frac{\mu}{ \sqrt{   r^2 - 2r \cos \theta +1  }}
\end{equation*}
and
\begin{equation*}\label{polarvectorfield}
X= r \partial_r .
\end{equation*}
Then it suffices to  show that
\begin{equation*}
\left. \begin{matrix} X(H) \end{matrix} \right|_{H^{-1}(c)} = \left. \begin{matrix} \displaystyle r \frac{ \partial V}{\partial r} \end{matrix} \right|_{ H^{-1}(c)} >0, 
\end{equation*} 
which is equivalent to 
\begin{equation} \label{eq:derofV}
\left. \begin{matrix} \displaystyle   \frac{ \partial V}{\partial r} \end{matrix} \right|_{  H^{-1}(c)}   = \left. \begin{matrix} \bigg( \displaystyle  \frac{1-\mu}{r^2} + \frac{ \mu(r- \cos \theta)}{ \sqrt{  r^2 - 2r \cos \theta +1}^3} \bigg)  \end{matrix} \right|_{H^{-1}(c)} >0. 
\end{equation}
Recall that $r =l$ is the unique root of the equation
\begin{equation*}
\frac{\partial V(r,0)}{\partial r}= \frac{1-\mu}{r^2} - \frac{ \mu}{(1-r)^2} .
\end{equation*}

\;

\textbf{Claim}. Given $r<1$, the derivative $\partial V/\partial r$ attains the  global minimum at $\theta =0$. \\
  For each $r$, we set
\begin{equation*} \label{eqU}
U_r( \theta):= \frac{\partial V(r, \theta) }{\partial r} = \frac{1-\mu}{r^2} + \frac{ \mu(r- \cos \theta)}{ \sqrt{ r^2 - 2r \cos \theta +1}^3}, \;\;\; \theta \in S^1
\end{equation*}
and compute that
\begin{eqnarray*}
\frac{ \partial U_r }{\partial \theta} &=& \frac{  \mu \sin \theta ( - 2 r^2 + r \cos \theta +1)}{\sqrt{ r^2 - 2r \cos \theta +1}^5} \\
\frac{ \partial^2 U_r }{\partial \theta^2} &=& \frac{  \mu \cos \theta ( - 2 r^2 + r \cos \theta +1)}{\sqrt{ r^2 - 2r \cos \theta +1}^5}  - \frac{  \mu r \sin^2 \theta }{\sqrt{ r^2 - 2r \cos \theta +1}^5} -\frac{ 5 \mu r \sin ^2 \theta ( - 2 r^2 + r \cos \theta +1)}{\sqrt{ r^2 - 2r \cos \theta +1}^7} .
\end{eqnarray*}
We first observe that $\theta=0$ and $\theta=\pi$ are critical points. For a given $r$ we have $-2r^2 + r \cos \theta_0 +1 = 0 $ if and only if
\begin{equation*}
\cos \theta_0 = \frac{ 2r^2 -1}{r}.
\end{equation*}
We see that $-1 \leqslant  (2r^2-1)/r \leqslant 1 $ implies that $r \geqslant 1/2$. In particular, $r=1/2$ if and only if $\cos \theta_0 = -1$, i.e.,  $\theta_0 = \pi$. Summarizing, for $r \leqslant 1/2$, there exist precisely two critical points of $U_r$, i.e., $\theta = 0$ and $\theta= \pi$, while for $r >1/2$ there exists another critical point $\theta_0 = \theta_0(r)$ satisfying $-2r^2 + r \cos \theta_0 +1 =0$.  Assume that $r > 1/2$.  Then we have
\begin{equation*}
\frac{ \partial^2 U_r (\theta_0) }{\partial \theta^2} = \frac{ \partial^2 U_r (2\pi - \theta_0) }{\partial \theta^2}=   - \frac{  \mu r \sin^2 \theta_0 }{\sqrt{ r^2 - 2r \cos \theta_0 +1}^5}  <0.
\end{equation*}
This implies that $\theta=\theta_0$ are a  local maximum. 

On the other hand, we compute that
\begin{eqnarray*}
\frac{\partial^2 U_r(0)}{\partial \theta^2}  &=& \frac{\mu ( -2r^2 + r + 1)}{ |r-1|^5} >0 \\
\frac{\partial^2 U_r(\pi)}{\partial \theta^2}  &=& \frac{\mu ( 2r^2 + r - 1)}{ |r+1|^5} \begin{cases}   <0 & \text{ if } r<\frac{1}{2} \\     =0 & \text{ if } r=\frac{1}{2}    \\  >0 & \text{ if } r>\frac{1}{2}      \end{cases}
\end{eqnarray*}
and
\begin{equation*}
U_r (0) - U_r(\pi) = -\frac{ \mu }{(1-r)^2}  -\frac{ \mu }{(1+r)^2} <0 .
\end{equation*}
This implies that $\theta=0$ is the unique global minimum for $U_r$, $0< r < 1$.  This proves the claim.

Now the same argument in the proof of \cite[corollary 5.3]{contact} shows that 
\begin{equation*}
\mathcal{K}_c^E \subset \left \{     (q_1, q_2) : r < l       \right \}. 
\end{equation*}
Since  $\partial V(r,0) / \partial r > 0 $  for $r <l$, together with the previous claim this shows that (\ref{eq:derofV}) holds true.

\begin{Remark} \rm One can perform Moser's regularization \cite[section 2]{Moser} to the Euler problem. To extend the vector field $X$ to the regularization, we only need to consider $|q|<\epsilon$.  Performing the coordinates changes in Moser's regularization we see that
\begin{equation*}
|q|= |\eta|(1-\xi_0 ) < \epsilon
\end{equation*}
and
\begin{equation*}
X =  \eta_0 \partial_{\eta_0} + \eta_1 \partial_{\eta_1} + \eta_2 \partial_{\eta_2},
\end{equation*}
where $(\xi, \eta) \in T^* S^2$. 
One can check that the Liouville vector field $ X=\eta \partial _{\eta}$ is transverse to the regularized energy hypersurface with $|\eta|(1-\xi_0) <\epsilon$ for sufficiently small $\epsilon >0$, for example, see \cite[section 6.2]{contact}. This means that we have a global Liouville vector field which is transverse to energy hypersurfaces for energies below the critical Jacobi energy. Therefore, for any $c<c_J$ the regularized energy hypersurface is of restricted contact type. 
\end{Remark}

We have proven the following.
\begin{Proposition} \rm  For $c<c_J$, each connected component of the energy hypersurface can be regularized to form the three-dimensional manifold which is diffeomorphic to $\R P^3 $. The two connected components of the regularized energy hypersurface are fiberwise star-shaped or of restricted contact type, where the transverse Liouville vector fields are given by
\begin{equation*}
X_E = (q-E) { \partial _q} \;\;\;\;\; \text{and} \;\;\;\;\;X_M = (q-M){\partial _q} .
\end{equation*}
\end{Proposition}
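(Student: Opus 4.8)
The assertion collects the computations of this appendix into one statement, so the plan is to assemble them and then dispose of the two points not yet addressed in the body: transversality of the Liouville field across the regularized collision, and the diffeomorphism type. Throughout I treat the Earth component; the Moon component follows verbatim after interchanging the roles of $E$ and $M$ (equivalently, replacing $\mu$ by $1-\mu$ and $X_E$ by $X_M$). That the component is fiberwise star-shaped is immediate, since $H^{-1}(c)$ meets each cotangent fibre in the round circle $\{|p|^2 = 2(c-V(q))\}$, star-shaped about the origin; the substance is the restricted contact type.

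First I would check that $X_E = (q-E)\partial_q$ is genuinely Liouville for $\omega = \sum dp_j \wedge dq_j$: base translations preserve $\omega$, and $\iota_{X_E}\omega = -\sum (q_j - E_j)\,dp_j$ is a global primitive of $\omega$. Restricted contact type then follows the instant transversality $X_E(H)|_{H^{-1}(c)} \neq 0$ is in hand, with contact form $\lambda_E := \iota_{X_E}\omega$ pulled back to the hypersurface. The decisive reduction is that $X_E$ carries no $\partial_p$ component, so in the Earth polar coordinates $X_E(H) = r\,\partial V/\partial r$ depends on the position alone. Transversality over the Earth component is therefore equivalent to the pointwise positivity $\partial V/\partial r > 0$ on the whole Hill region $\mathcal{K}_c^E$, the momenta dropping out entirely.

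I would establish that positivity in two moves. The angular move is the \textbf{Claim} that for every $r<1$ the map $\theta \mapsto U_r(\theta) = \partial V(r,\theta)/\partial r$ is globally minimized at the collinear configuration $\theta=0$; I would prove it by enumerating the critical points of $U_r$ (always $\theta = 0, \pi$, together with an extra pair $\theta_0$ born once $r>1/2$, at $\cos\theta_0 = (2r^2-1)/r$), reading off the signs of $\partial^2 U_r/\partial\theta^2$ at each, and comparing the critical values. This reduces everything to the one-dimensional collinear inequality. The radial move supplies that inequality: $r=l$ is the unique zero of $\partial V(r,0)/\partial r$, which is positive for $r<l$, while for $c<c_J$ the Hill region satisfies $\mathcal{K}_c^E \subset \{r<l\}$ by the argument of \cite[corollary 5.3]{contact}. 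Chaining the two, $\partial V/\partial r \geqslant \partial V(r,0)/\partial r > 0$ on all of $\mathcal{K}_c^E$, hence $X_E(H) > 0$ on the Earth component and $\lambda_E$ is a contact form.

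It then remains to cross the collision and to identify the manifold. Near collision I would pass to Moser's regularization, in which the flow becomes (a reparametrisation of) the geodesic flow on $S^2$; a coordinate computation shows $X_E$ extends to the radial fibre field $\eta\,\partial_\eta$ on $T^*S^2$ and remains transverse on the collision locus $|\eta|(1-\xi_0) < \epsilon$ for small $\epsilon$, so $\lambda_E$ extends across the filled-in collision orbit and the component closes up to a smooth compact hypersurface of restricted contact type. For the diffeomorphism type, the regularized negative-energy level of a single attracting centre is the unit cotangent bundle $S^*S^2 \cong \R P^3$, and for $c<c_J$ the Earth component is exactly such a one-centre hypersurface (this is the single cover of the $\R P^3$ already recorded in the Remark following the definition of $K$). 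I expect the angular \textbf{Claim} to be the genuine obstacle: one must show the extra branch $\theta_0(r)$ present for $r>1/2$ is a local maximum rather than a competing minimum, and that $U_r(0) < U_r(\pi)$, so that $\theta=0$ alone realizes the global minimum — this is exactly where the second-derivative sign analysis and the value comparison $U_r(0) - U_r(\pi) = -\mu/(1-r)^2 - \mu/(1+r)^2 < 0$ are needed.
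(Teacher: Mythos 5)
Your proposal reproduces the paper's Appendix~B argument essentially verbatim: the same Liouville field $X_E=(q-E)\partial_q$, the same reduction in Earth polar coordinates to the pointwise inequality $\partial V/\partial r>0$ on $\mathcal{K}_c^E$, the same angular \textbf{Claim} with the identical critical-point enumeration (the extra branch $\theta_0$ for $r>1/2$ being a local maximum) and the comparison $U_r(0)-U_r(\pi)=-\mu/(1-r)^2-\mu/(1+r)^2<0$, the same inclusion $\mathcal{K}_c^E\subset\{r<l\}$ via \cite[corollary 5.3]{contact}, and the same extension of $X_E$ to $\eta\,\partial_\eta$ through Moser's regularization. One caveat: your opening aside that fibrewise star-shapedness is ``immediate'' because $H^{-1}(c)$ meets each cotangent fibre in the round circle $\{|p|^2=2(c-V(q))\}$ is off the mark --- that is star-shapedness in the $p$-fibres of $T^*\R^2$, i.e.\ transversality of $p\,\partial_p$, which in fact fails on the zero-velocity curve $\partial\mathcal{K}_c$ where those circles degenerate to points; the fibrewise star-shapedness asserted in the Proposition is that of the \emph{regularized} hypersurface in the fibres of $T^*S^2$, which is precisely the transversality of $\eta\,\partial_\eta$ that the main body of your argument (and of the paper's) establishes, so the aside is harmless but should be deleted.
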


\;\;\;

\end{document}